\setlist[enumerate,1]{label={(\roman*)}}
\titleformat{\section}[block]
{\normalfont \large\bfseries}
{\thesection}{1.2em}{\bfseries}
\titleformat{\subsection}[block]
{\normalfont \normalsize \bfseries}
{\thesubsection}{0.9em}{\bfseries}
\titlespacing{\paragraph}{%
  0pt}{
  0.5\baselineskip}{
  1em}
\def\th@plain{%
  \thm@notefont{}
  \itshape 
}
\def\th@definition{%
  \thm@notefont{}
  \normalfont 
}
\newtheorem{theorem}{Theorem}[section]
\newtheorem{lemma}[theorem]{Lemma}
\newtheorem{proposition}[theorem]{Proposition}
\newtheorem{remark}[theorem]{Remark}
\theoremstyle{definition}
\numberwithin{equation}{section}
\newcommand{\NN}{\mathbb{N}}
\newcommand{\ZZ}{\mathbb{Z}}
\newcommand{\RR}{\mathbb{R}}
\newcommand{\TT}{\mathbb{T}} 
\newcommand{\ind}[1]{\mathbbm{1}_{\left\{ #1 \right\} } } 
\newcommand{\dd}{\mathrm{d}} 
\newcommand{\EE}{\mathbb{E}} %
\newcommand{\PP}{\mathbb{P}} 
\newcommand{\Pm}{\mathrm{P}} %
\newcommand{\Ps}{\mathbf{P}} %
\newcommand{\rt}{\mathbf{o}} 
\newcommand{\Pw}{\mathbf{P}_{\omega}} 
\newcommand{\Ew}{\mathbf{E}_{\omega}} 
\newcommand{\Pwo}{\mathbf{P}_{\omega}^*} 
\newcommand{\Pws}{\mathrm{P}_{\omega}} 
\newcommand{\eps}{\varepsilon} 
\title{\textsc{Biased branching random walks\\  on Bienaymé--Galton--Watson trees}}
\author{Julien Berestycki\footnote{Department of Statistics, University of Oxford; julien.berestycki@stats.ox.ac.uk}, Nina Gantert\footnote{SoCIT, Department of Mathematics, Technical University of Munich; gantert@ma.tum.de}, David Geldbach\footnote{Department of Statistics, University of Oxford; {david.geldbach@stats.ox.ac.uk}}, 
	Quan Shi\footnote{Academy of Mathematics and Systems Science, Chinese Academy of Sciences; {quan.shi@amss.ac.cn}}}  
\date{\today}
\begin{document}
\maketitle
\begin{abstract}
We study $\lambda$-biased branching random walks on Bienaymé--Galton--Watson trees in discrete time. We consider the maximal displacement at time $n$, $\max_{\vert u \vert =n} \vert X(u) \vert$, and show that it almost surely grows at a deterministic, linear speed. 
We characterize this speed with the help of the large deviation rate function of the $\lambda$-biased random walk of a single particle.
A similar result is given for the minimal displacement at time $n$, $\min_{\vert u \vert =n} \vert X(u) \vert$.
\end{abstract}
{\bf 2010 Mathematics Subject Classification:} 
 60J80, 
 60F15, 
 60F20. 
 \\
{\bf Keywords:} Bienaymé--Galton--Watson tree, biased random walk, branching random walk, zero--one law. 

\section{Introduction and main results}

Branching particle systems have become a staple of probability theory. On the one hand they appear naturally in random models of evolving populations, turbulence cascades, or epidemiology to name but a few of the contexts where they have proven a key tool; and on the other, in spite of their simple definition  such processes have a rich and sophisticated structure while being remarkably useful  to study other important mathematical objects such as reaction diffusion equations or log-correlated fields.

Unless there is extinction, a branching particle system can be thought of as an expanding cloud of particles in some space and the first question one wants to answer is {\it at what linear speed does this expansion happen?} This is now very well understood in homogeneous deterministic space (e.g. for branching random walks or branching Brownian motion in $\mathbb{R}^d$).  
In discrete time and space, the random walk can be replaced by any irreducible Markov chain and the corresponding branching Markov chain can be described as follows. Start the system with one particle at the origin. After each time unit, particles produce offspring according to a fixed offspring law and the offspring particles choose a new location, independently of each other, according to the Markov chain, starting from the location of the mother particle. This model is also known as {\sl tree-indexed Markov chain}. One may ask about recurrence/transience of the branching Markov chain: is the origin (or any other site in the state space of the Markov chain)  visited infinitely often by some particle? Does the maximal distance of the particles to the origin grow at a linear speed? In the transient case, does the minimal distance of the particles to the origin grow at a linear speed? These questions have also been addressed in cases where the Markov chain is a random walk in random environment, see Section \ref{Related work}.\\
\noindent
Here, we consider a branching random walk on a Bienaymé--Galton--Watson tree where the underlying Markov chain is the $\lambda$--biased random walk on a Bienaymé--Galton--Watson tree. The motion of a single particle is a well-studied model of a random walk in random environment, although there are still fascinating open questions; see
\cite{lyons_probability_2016}.
In a homogeneous environment, i.e. for a transitive Markov chain, it is a well-known result \cite[Theorem 2.12]{Mueller2008} that the origin is visited infinitely often by some particle if and only if the product of the mean offspring number with the spectral radius of the Markov chain is strictly larger than $1$. We show that the same is true in our model, see Theorem \ref{thm:ls}.\\
If the Markov chain is \emph{statistically transitive} and if the distance to the origin of the Markov chain satisfies a large deviation principle, the maximal distance of the particles to the origin grows at a linear speed which should be given as follows. The linear speed $v$ is such that the exponential growth rate of the number of particles compensates the exponential decay of the probability for a single particle to be at time $n$ at a distance at least $nv$ from the origin, see formula \eqref{velocity-descr} for a precise statement. 
We refer to Section \ref{Related work} for known examples where this heuristic has been confirmed.
Indeed, this statement is true in our model: the main results of the paper, Theorem \ref{thm:max} and Theorem \ref{thm:min}, characterize the linear growth rate of both the maximal and minimal distances of the particles to the origin. While the proof of the upper bound of the linear growth rate of the maximal distance to the origin follows a standard argument (relying on a union bound and the many--to--one formula), the proof of the lower bound is more complicated and involves several decoupling procedures.\\
\noindent
The paper is organized as follows. In Section \ref{The model}, we define the model and describe our assumptions. In Section \ref{Results}, we state our main results, Theorems \ref{thm:max} and \ref{thm:min}. We then give in Section \ref{Outline} an outline of the proof of Theorem \ref{thm:max}, which is mainly based on Proposition \ref{prop:annealed_survival_proba} and Proposition \ref{prop:0-1-law-for-liminf}. In Section \ref{Related work}, we describe some related works. Turning to the proofs, we start by recalling large deviation statements for $\lambda$-biased random walk (of a single particle) on Bienaymé--Galton--Watson trees in 
Section \ref{subsection:lambda_biased_rw}. 
In Section \ref{subsection:recurrence_and_transience}, we study the recurrence property and prove Theorem~\ref{thm:ls}. Section~\ref{sec:upper} then recalls the many-to-one formula and provides a direct application.
In Section \ref{sec:0_1_laws} we recall a well-known zero--one law and use it to prove Proposition~\ref{prop:0-1-law-for-liminf}. 
In Section \ref{sec:annealed_survival_proba} we prove Proposition~\ref{prop:annealed_survival_proba} by using a comparison with a branching process in random environment.
We then show Theorem \ref{thm:max} and Theorem \ref{thm:min}. Finally, we conclude with some open questions in Section \ref{Open}.

\subsection{The model}\label{The model}
First, we need a distribution for the environment: Let $p:=(p_k, k= 0,1,2,\ldots)$ be a probability measure on $\mathbb{N}_0 = \{0,1,2,\ldots\}$. 
Define $\mathtt{BGW}$ to be the law of a Bienaymé--Galton--Watson (BGW--tree) $\omega$ with offspring distribution $p$. 
We will always assume that
\begin{equation}\label{eq:no_leaves_bgw}
p_0=0 ~\text{and}~ p_1<1.
\end{equation}
Then the Bienaymé--Galton--Watson tree is without leaves and supercritical, i.e.\ with 
\[
m_{\mathtt{BGW}}:= \sum_{k\ge 1} k p_k >1.  
\]
Throughout this paper, we also suppose that
\[
m_{\mathtt{BGW}}<\infty. 
\]
The root of the tree $\omega$ is denoted by $\rt$. 
For  a vertex $x \in \omega$, write $|x|$ for its height, that is the (graph) distance between $x$ and $\rt$. Denote by $x_-$ its parent, by $\kappa_x$ the number of its children, and by $xj\in \omega$ its $j$-th child for $j\le \kappa_x$. 
We will denote the children of the root by $1,2, \ldots, \kappa_\rt$ instead of $\rt 1,\rt 2, \ldots, \rt\kappa_\rt$. Let $D_n$ be the family of vertices with height equal to $n$; we call $D_n$ the $n$-th level of the tree.  
For every $i\le |x|$, let $x_i \in D_i$ be its ancestor at the $i$-th level. 
 Write $x\preceq y$ if $x$ is an ancestor of $y$ or $x=y$. 

Next, we need a branching mechanism: Let $\mu:=(\mu_k, k= 0,1,2,\ldots)$ be another probability measure. Define $\TT$ to be another Bienaymé--Galton--Watson tree with offspring distribution $\mu$ independent of $\omega$. This tree will act as the genealogy of the branching random walk. To differentiate between $\TT$ and $\omega$ in notation, we call vertices of $\TT$ particles and typically denote them by $u,v,w$ (as opposed to $x,y,z\in \omega$). Similarly, we denote the root of $\TT$ by $\varnothing$, the number of children of a particle $u$ by $\gamma_u$ and we call $\vert u \vert$ the generation of $u$.

Thirdly, given the environment $\omega$ and a parameter $\lambda \geq 0$, we construct the $\lambda$--\emph{biased branching random walk} (BRW) on $\omega$ in discrete time: We denote the branching random walk by $(X(u),u\in \TT)$ and its law by $\Pw$. Unless otherwise noted, the initial particle $\varnothing \in \TT$ starts at the root $\rt \in \omega$, i.e.\ $X(\varnothing)=\rt$. Then we proceed inductively. At time $n+1$, each particle $u\in \TT$ with $\vert u \vert =n$ is replaced by a set of offspring, independently of all other particles in the $n$-th generation. The number of children 
 $\gamma_u$ follows the distribution 
\begin{equation*}
	\Pw(\gamma_u= k)=\mu_k,  \qquad k\ge 0.
\end{equation*}
For each $j\le \gamma_u$, the child $uj\in \TT$ is situated at a certain position in $\omega$, independently of $\gamma_u$ and the other children,  according to the following rule:
\begin{align*}
 	\Pw(X(uj)= i | X(u)=\rt) &=\frac{1}{\kappa_{\rt}},\quad i=1,2,\ldots, \kappa_{\rt};\\
	\Pw(X(uj) = x_- | X(u) =x) &=\frac{\lambda}{\lambda +\kappa_x}, \quad x\ne \rt;\\
 	\Pw(X(uj) = xi | X(u) =x) &=\frac{1}{\lambda +\kappa_x}, \quad x\ne \rt,~ i=1,2,\ldots, \kappa_x.
\end{align*}
We further assume that the BRW is supercritical with finite expectation, in the sense that
\begin{equation}\label{supcrit}
	m:= \sum_{k\ge 1} k \mu_k \in (1,\infty).  
\end{equation}
Under this condition, it is well-known that the BRW survives with strictly positive probability. 
In fact, for simplicity, throughout this paper we work under the stronger condition that 
\begin{equation} \label{assuonBranching}
	\mu_0=0 ~\text{and}~ \mu_1<1,
\end{equation} 
such that the BRW survives almost surely. If one assumes only \eqref{supcrit}, similar results can be easily obtained by conditioning on the event of survival.

Lastly, when we consider the randomness of the environment together with the randomness of the process, we speak of the \emph{annealed} law (compared to the \emph{quenched} law $\Pw$). The annealed law is obtained by averaging $\Pw$ over the environment,
\begin{equation*}
	\PP(\cdot):= \int \Pw(\cdot) \mathtt{BGW}(\dd \omega).
\end{equation*} 
We denote expectations with respect to $\mathtt{BGW}$ by $\EE_{\mathtt{BGW}}$, e.g.\  $\EE[\ \cdot\ ] = \EE_{\mathtt{BGW}}[\mathbf{E}_\omega[\ \cdot\ ]]$. This is to emphasise that the remaining randomness depends only on $\omega$ but not the branching random walk.

\subsection{Results}\label{Results}

The main purpose of this work is to study the asymptotic behaviour of the maximal height of the branching random walk at time $n$, that is $ \max_{|u|=n} |X(u)|$, as $n\to \infty$. 

Because at generation $n$ the branching random walk has of the order of $m^n$ particles, a first moment argument and the so-called {\it many-to-one} formula suggest that the highest particle should be at a height of order $v_{\lambda,m} n $ where $v_{\lambda,m} $ corresponds to the point where the rate function of the $\lambda$-biased random walk is equal to $\log m$.  

More precisely, let $v_{\lambda}$ be the velocity of a single $\lambda$-biased random walk (see Theorem~\ref{thm:speedRW}). 
Then, the velocity $v_{\lambda,m}$ is related to the rate function $I_{\lambda}: [v_{\lambda},1]\mapsto [0,\infty)$ of the large deviation principle for $\lambda$-biased random walks, obtained in \cite{DGPZ-ld} (c.f.\ Theorem \ref{lem:ld}). It is known that $I_\lambda$ is continuous, convex, and strictly increasing. 
We have 
\begin{equation} \label{velocity-descr}
	v_{\lambda, m}= \sup\left\{a\in [v_{\lambda},1]: I_{\lambda} (a) \le \log  m \right\} .
\end{equation}
In particular, if $v_{\lambda, m} < 1$, $v_{\lambda, m}$ is the unique solution to $I_\lambda(v_{\lambda, m}) =\log m$. 
Since $I_\lambda(v_{\lambda}) =0$, we have $v_{\lambda, m}\in (v_{\lambda},1]$. Moreover, $v_{\lambda, m}=1$ if and only if 
$m \ge e^{I_{\lambda}(1)}= (\sum_{k\ge 1} \frac{k}{k+\lambda} p_k )^{-1}$. For any fixed $\lambda$, $v_{\lambda, m}$ is strictly increasing with $m$ in $(1, e^{I_{\lambda}(1)})$. 

\begin{theorem}[Velocity of the maximal displacement] \label{thm:max}
	Let $(X(u),u\in \TT)$ be a $\lambda$-biased branching random walk with reproduction law $\mu$. 
	Suppose that $\mu_0=0$, $\mu_1<1$, $m:= \sum_{k\ge 1} k \mu_k >1$ and $m<\infty$.
	Then for $\mathtt{BGW}$--a.e.\ $\omega$, we have
	\[
	\lim_{n\to \infty} \frac{1}{n} \max_{|u|=n} |X(u)|  =  v_{\lambda, m}, \qquad \Pw-a.s., 
	\]
	where $v_{\lambda, m}$ is a constant that only depends on $\lambda$, $m$ and the 
	 tree offspring law $p$ and is given by \eqref{velocity-descr}.
\end{theorem}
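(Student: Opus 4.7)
The plan is to establish separately the upper bound $\limsup_n n^{-1}\max_{|u|=n}|X(u)|\le v_{\lambda,m}$ and the matching lower bound, both $\Pw$-almost surely for $\mathtt{BGW}$-a.e.\ $\omega$.

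\textbf{Upper bound via first moment.} For any fixed $v>v_{\lambda,m}$, I would apply the many-to-one formula on the quenched tree to obtain
\[
\Ew\!\left[\#\{u\in \TT : |u|=n,\ |X(u)|\ge vn\}\right]=m^n\,\Pws\bigl(|S_n|\ge vn\bigr),
\]
where $(S_n)$ denotes a single $\lambda$-biased random walk on $\omega$ started at $\rt$. Averaging over $\mathtt{BGW}$ and using the annealed large deviation estimate recalled in Section \ref{subsection:lambda_biased_rw}, the annealed expected number is at most $\exp(n(\log m - I_\lambda(v) + o(1)))$. Since $v > v_{\lambda,m}$ forces $I_\lambda(v) > \log m$, this is summable in $n$. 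Markov's inequality combined with Borel--Cantelli then shows that $\PP$-a.s.\ no particle lies above $vn$ for all sufficiently large $n$; Fubini transfers this to a quenched statement for $\mathtt{BGW}$-a.e.\ $\omega$, and letting $v\downarrow v_{\lambda,m}$ along a countable sequence yields the upper bound.

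\textbf{Lower bound via 0-1 law and annealed positivity.} My plan here has two steps. First, I would invoke Proposition \ref{prop:0-1-law-for-liminf}, which guarantees that $\liminf_n n^{-1}\max_{|u|=n}|X(u)|$ equals a deterministic constant $v^\star$ almost surely, so that it suffices to prove $v^\star\ge v$ for every $v<v_{\lambda,m}$. By the 0-1 law this reduces to showing that the annealed event that some particle at time $n$ lies above $vn$ has probability bounded below away from $0$, uniformly in $n$. Second, this is precisely the content of Proposition \ref{prop:annealed_survival_proba}. The strategy of its proof is a comparison with a branching process in random environment: fix a block length $\ell$, declare a particle at level $k\ell$ ``successful'' if it has a descendant reaching level $(k+1)\ell$ along an atypical but favorable environment segment, and exploit the fact that distinct successful particles explore disjoint subtrees of $\omega$ to realize the successful lineages as a BPRE. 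Thanks to \eqref{velocity-descr}, the exponent $\log m - I_\lambda(v)$ is strictly positive for $v<v_{\lambda,m}$, so the BPRE can be tuned to be supercritical by choosing $\ell$ large, and its positive survival probability delivers the required annealed lower bound.

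\textbf{Main obstacle.} The upper bound is routine once the LDP is available. The principal difficulty lies in the annealed positivity statement: one must decouple the strong correlations introduced among different rays by the shared environment $\omega$, while simultaneously preserving the sharp exponential rate $I_\lambda(v)$ that characterises $v_{\lambda,m}$. The comparison with a BPRE is delicate because both the branching and the environment contribute to the randomness, and matching the correct exponential rate requires working on large but carefully chosen block lengths. The 0-1 law of Proposition \ref{prop:0-1-law-for-liminf} then serves as the bridge from this annealed bound to the almost sure quenched conclusion.
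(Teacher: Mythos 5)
The upper bound is fine; you use the annealed LDP plus a Fubini transfer where the paper invokes the quenched LDP of Theorem \ref{lem:ld} directly, but both routes go through.

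For the lower bound, however, there is a real gap. You read Proposition \ref{prop:0-1-law-for-liminf} as asserting that $\liminf_n n^{-1}\max_{|u|=n}|X(u)|$ is a deterministic constant. It does not say that. As stated it is only an implication: \emph{if} $\Pw(\liminf\ge a)>0$ holds for $\mathtt{BGW}$-a.e.\ $\omega$, \emph{then} $\Pw(\liminf\ge a)=1$ holds for $\mathtt{BGW}$-a.e.\ $\omega$. Proposition \ref{prop:annealed_survival_proba} gives only annealed positivity $\PP(\liminf\ge a)>0$, which a priori yields a set of $\omega$ of \emph{positive} but not full $\mathtt{BGW}$-measure on which $\Pw(\liminf\ge a)>0$. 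So the hypothesis of Proposition \ref{prop:0-1-law-for-liminf} is not yet in hand, and the chain ``Proposition \ref{prop:annealed_survival_proba} $\Rightarrow$ Proposition \ref{prop:0-1-law-for-liminf} $\Rightarrow$ quenched a.s.\ conclusion'' does not close as you describe it.

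What is missing is a second, \emph{environment}-level 0--1 law under $\mathtt{BGW}$, which the paper supplies separately. It passes to the killed-at-root BRW $\Pwo$ and defines $G_a=\{\omega:\Pwo(\liminf\ge a)=0\}$. Thanks to the killing, the subpopulation issuing from a child $i$ of the root dominates (by a coupling that kills any particle crossing back into $\rt$) a BRW with law $\mathbf{P}^*_{\omega^i}$ on the fringe tree $\omega^i=F_\omega(i)$, and this is what makes $G_a$ an inherited property in the sense of Lemma \ref{lem:inherited}; hence $\mathtt{BGW}(G_a)\in\{0,1\}$. Proposition \ref{prop:annealed_survival_proba} (which holds equally under $\PP^*$, since the trajectories in $\mathcal{Z}^{(n)}_i$ avoid revisiting the root by construction) then forces $\mathtt{BGW}(G_a^c)>0$, hence $\mathtt{BGW}(G_a^c)=1$, which is exactly the $\mathtt{BGW}$-a.e.\ quenched positivity that Proposition \ref{prop:0-1-law-for-liminf} needs as input. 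Without the inherited-property step and the killed measure $\Pwo$, the argument does not go through. As a smaller point, your sketch of the proof of Proposition \ref{prop:annealed_survival_proba} also slightly misstates the mechanism: distinct particles at level $kn$ may occupy the \emph{same} vertex of $\omega$, so their futures are not automatically decoupled; the paper handles this by a Jensen comparison to a BPRE in which all particles of a generation share one environment, which only lowers survival and so still yields the required lower bound.
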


Besides the maximal height of a branching random walk, we can also study the minimal height at time $n$, $\min_{\vert u \vert =n} \vert X(u) \vert$, as $n \to \infty$. The study of this quantity is related to the question of recurrence and transience of the branching random walk, or in other words its {\it local extinction/local survival}. Let $d_{\min}:= \min\{k\colon p_k >0\}$ be the minimal offspring number of $\omega$ and let 
\[
\alpha_x:= \Pw \left( \forall n\in \NN \ \exists u\in \TT: \vert u \vert\geq n, X(u)=x \middle\vert  X(\varnothing) =x \right),
\]
the quenched probability that starting at $x$, there are infinitely many particles that visit $x$. 
 
 \begin{theorem} \label{thm:ls}
	For $\mathtt{BGW}$-a.e. $\omega$, a BRW $(X(u), u\in \TT)$ is transient in the sense that $\alpha_x = 0$ for all $x\in \omega$
	if and only if 
		\begin{equation}\label{eq:rec}
 \lambda < d_{\min} \quad \text{   and   }	\quad	m \leq 
			\frac{d_{\min}+\lambda}{2\sqrt{\lambda d_{\min}}},
	\end{equation}	
	where $ m= \sum_{k\ge 1} k \mu_k> 1$ is the  mean offspring of the BRW. \\
 	Otherwise if $\lambda \geq d_{\min}$ or $m>\frac{d_{\min}+\lambda}{2\sqrt{\lambda d_{\min}}}$,  then the BRW is strongly recurrent, i.e.\ $\alpha_x = 1$ for all $x\in \omega$.
 \end{theorem}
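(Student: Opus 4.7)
My approach is the classical branching-Markov-chain dichotomy based on the spectral radius of the underlying walk, suitably adapted to the random environment $\omega$. Set $\rho_\lambda := \frac{2\sqrt{\lambda d_{\min}}}{\lambda + d_{\min}}$, the threshold in the theorem, and let $\rho(\omega):=\limsup_{n\to\infty}\bigl(p^\omega_n(\rt,\rt)\bigr)^{1/n}$ be the quenched spectral radius of the $\lambda$-biased walk on $\omega$. The key preliminary step -- and the main technical difficulty -- is to show that $\mathtt{BGW}$-a.s.\ one has $\rho(\omega)=\rho_\lambda$ when $\lambda<d_{\min}$, and $\rho(\omega)=1$ when $\lambda\ge d_{\min}$. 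For the upper bound $\rho(\omega)\le\rho_\lambda$ I would compare heat kernels with the $\lambda$-biased walk on the $d_{\min}$-regular tree, whose spectral radius is exactly $\rho_\lambda$ by the classical generating-function computation. The matching lower bound comes from a Borel--Cantelli argument: along any fixed infinite ray of $\omega$ the vertex degrees are i.i.d., so $\omega$ a.s.\ contains arbitrarily long spines on which every vertex has exactly $d_{\min}$ children, and the walk restricted to such a spine produces return-to-root probabilities that cannot decay faster than $\rho_\lambda^n$ up to polynomial corrections. When $\lambda\ge d_{\min}$ these spines drive the decay all the way up to $\rho(\omega)=1$.

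Given this identification, the transience direction ($\lambda<d_{\min}$ and $m\le\rho_\lambda^{-1}$) follows from a first-moment argument. With $N_n(\rt)$ the number of particles in $\TT$ visiting $\rt$ at generation $n$, the many-to-one formula yields $\Ew[N_n(\rt)]=m^n p^\omega_n(\rt,\rt)$. When $m\rho_\lambda<1$ the sum $\sum_n\Ew[N_n(\rt)]$ converges geometrically, and Markov + Borel--Cantelli give $\alpha_\rt=0$. The boundary case $m\rho_\lambda=1$ is handled by transferring to $\omega$ the refined estimate $p^\omega_n(\rt,\rt)\le Cn^{-3/2}\rho_\lambda^n$ from the regular tree, leaving a summable series $\sum n^{-3/2}$. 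To extend $\alpha_x=0$ to every $x\in\omega$, one applies the same argument rooted at $x$, using that the subtree $\omega_x$ is again (conditionally on $\kappa_x$) distributed as a BGW tree.

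For strong recurrence ($\lambda\ge d_{\min}$ or $m>\rho_\lambda^{-1}$) I would embed a supercritical branching process via the first-return excursions of $\TT$ to $\rt$: each particle at $\rt$ contributes a random number $\xi$ of descendants whose first return to $\rt$ occurs, with quenched mean
\[
\Ew[\xi]=\sum_{n\ge 1}m^n f^\omega_n(\rt,\rt),
\]
where $f^\omega_n$ is the first-return probability of the single walker. The standard generating-function identity between $f^\omega_n$ and $p^\omega_n$, combined with the lower bound $\rho(\omega)\ge\rho_\lambda$ (and $\rho(\omega)=1$ when $\lambda\ge d_{\min}$, which already forces the series to diverge), shows $\Ew[\xi]>1$ in the required regime. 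Hence the embedded branching process is supercritical and survives with positive quenched probability, so $\alpha_\rt>0$. A zero-one law in the spirit of Proposition~\ref{prop:0-1-law-for-liminf}, or a Kolmogorov tail-field argument exploiting the exchangeability of children's labels, upgrades $\alpha_\rt>0$ to $\alpha_\rt=1$, and self-similarity of $\omega$ promotes this to $\alpha_x=1$ for every $x$. As signalled above, the principal obstacle throughout is the a.s.\ identification of $\rho(\omega)$ with the explicit threshold $\rho_\lambda$ together with the polynomial heat-kernel refinement, both of which require a careful interplay between the environment $\omega$ and its worst-case $d_{\min}$-regular skeleton.
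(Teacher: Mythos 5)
Your overall strategy---identify the quenched spectral radius $\rho(\omega)$ of the $\lambda$-biased walk and then apply the branching-Markov-chain dichotomy---is the same one the paper uses (Proposition~\ref{prop:sr} plus M\"uller's criterion). The level-process coupling you sketch for the upper bound $\rho(\omega)\le\rho_\lambda$ (dominating $|S_n^\omega|$ from below by the reflected biased walk on $\ZZ_{\ge0}$ with drift $d_{\min}/(d_{\min}+\lambda)$, since every $\kappa_x\ge d_{\min}$) is valid and arguably more elementary than the paper's route through the slowdown rate function $I_\lambda(0)$; it also justifies transferring the $n^{-3/2}$ heat-kernel refinement needed for the critical case $m\rho_\lambda=1$.

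However, there are two genuine gaps. First, the spectral-radius lower bound in the regime $\lambda\ge d_{\min}$ is wrong as stated. A subtree in which every vertex has exactly $d_{\min}$ children has spectral radius $\frac{2\sqrt{\lambda d_{\min}}}{\lambda+d_{\min}}<1$ for $\lambda>d_{\min}$, so ``$d_{\min}$-spines'' alone cannot drive $\rho(\omega)$ up to $1$. The paper's proof of Proposition~\ref{prop:sr} for $d_{\min}\le\lambda<m_{\mathtt{BGW}}$ requires a two-layer trap $A(x,2L)$: the first $L$ levels are $d_0$-ary with $d_0>m_{\mathtt{BGW}}>\lambda$ (pulling the walker in), the next $L$ levels are $d_{\min}$-ary (pushing it back), and only this combination produces return probabilities with subexponential decay. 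Your single-type skeleton misses this mechanism entirely.

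Second, the upgrade from $\alpha_\rt>0$ to $\alpha_\rt=1$ in the recurrence direction is not proved, and the two devices you invoke do not apply. Proposition~\ref{prop:0-1-law-for-liminf} is a zero--one law for the maximal displacement, not for infinite returns, and Kolmogorov/Hewitt--Savage arguments require an independent or exchangeable sequence, which the generations of the BRW are not (the event ``$\rt$ is visited at infinitely many generations'' is tail but lives on a non-i.i.d.\ filtration). The paper explicitly flags the absence of a weakly recurrent phase as a nontrivial part of the theorem and establishes it by a direct construction: one shows that a BRW killed inside a $(d_{\min},2L)$-nice region (or the corresponding two-layer trap when $\lambda\ge d_{\min}$) is itself supercritical, that the ancestral lineage of a tagged particle a.s.\ meets infinitely many such disjoint regions, and that the killed BRWs seeded at these encounters are independent Bernoulli trials with a common success probability $c>0$; Borel--Cantelli then gives a.s.\ survival of at least one of them, hence $\alpha_x=1$ by irreducibility. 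Your first-return GW-embedding gives only $\alpha_\rt>0$, and some replacement for the seed-region argument is needed.
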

 
We prove Theorem~\ref{thm:ls} in Section \ref{subsection:recurrence_and_transience}. We remark that part of this statement is the non--existence of the \emph{weakly recurrent} phase: there is no choice of $\lambda$, $m$, and $x$ such that we have $\alpha_x \in (0,1)$. 
The second part of condition \eqref{eq:rec} can also be written as 
$m\le e^{I_{\lambda}(0)}$; here $I_\lambda:[0,v_\lambda] \mapsto [0,\infty)$ is the large deviation rate function related to the slow down of the $\lambda$--biased random walk, see Theorem \ref{lem:ld2}. The expression for $I_\lambda(0)$ (respectively the threshold in \eqref{eq:rec}) may seem complicated. We note that $I_\lambda(0) = H(\frac{1}{2} \vert \frac{d_{\min}}{d_{\min}+\lambda})$ where $H(s\vert t)= s\log \frac{s}{t} + (1-s)\log \frac{1-s}{1-t}$ is the relative entropy between two Bernoulli distributions. Note that the critical value for $\lambda$, namely $d_{\min}$, which is required for a transient phase of the branching random walk is different from the critical value of $\lambda$ for recurrence/transience of the $\lambda$--biased random walk which is $m_{\mathtt{BGW}}$ \cite[Proposition~6.4]{Lyo90}. See Figure \ref{fig:recurrence_regimes} for an illustration of the different recurrence and transience regimes.
	
	For the case $\lambda =1$, it is proven in \cite{Su2014} that a BRW is transient in the sense that $\alpha_\rt =0$ if and only if \eqref{eq:rec} holds. The fact that recurrence or transience is determined only by $\mathrm{supp} \{p_k, k \geq 1\}$ and not by the actual distribution $ \{p_k, k \geq 1\}$ can also be compared to \cite[Theorem 1.1]{CometsPopov2007} where the authors study a BRW in random environment on $\ZZ^d$. There the recurrence/transience criterion depends only on $\mathrm{supp} \ Q$ (in their notation). 
	
\begin{figure}[tbht]
	\begin{center}
		\includegraphics[width=0.6\textwidth]{./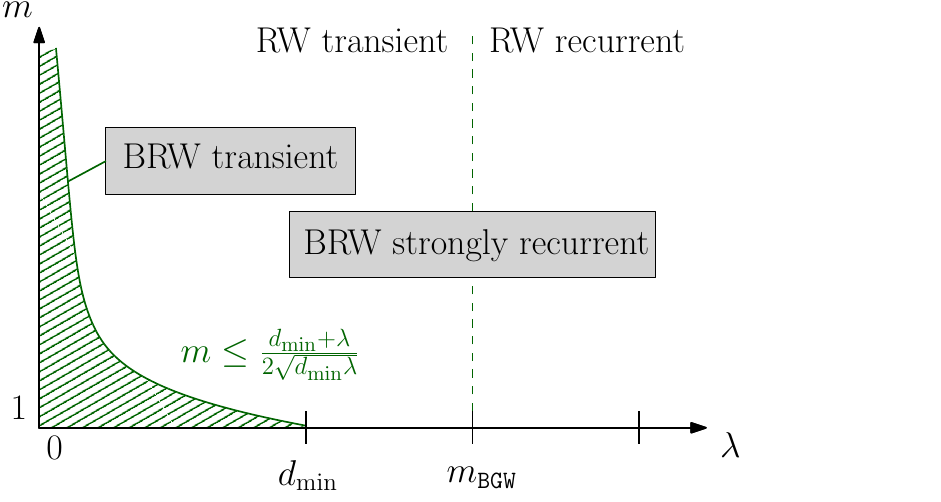}
		\caption{The different recurrence and transience regimes for the $\lambda$--biased random walk and $\lambda$--biased branching random walk.}
		\label{fig:recurrence_regimes}
	\end{center}
\end{figure}
	
Clearly, in the strongly recurrent regime we have
\begin{equation*}
	\liminf_{n\to\infty} \min_{u\in \TT, |u|=n} \frac{\vert X(u)\vert}{n} =0, \quad \Pw-a.s. \quad \text{for} \quad \mathtt{BGW}-a.e. \ \omega.
\end{equation*}
We will show that indeed
\begin{equation}\label{slower-in-rec-case}
	\lim_{n\to\infty} \min_{u\in \TT, |u|=n} \frac{\vert X(u)\vert}{n} =0, \quad \Pw-a.s. \quad \text{for} \quad \mathtt{BGW}-a.e. \ \omega,
\end{equation}
 see \eqref{slower-in-rec-case2}.
 
One might wonder about the behaviour of the number of particles at the root $\# \{u\in \TT, |u|=2n : X(u)  =\rt\}$ in this strongly recurrent phase (remember that we can only have particles at the root $\rt$ at even generations since the random walk is 2-periodic and we start from the root).  Although we refrain from proving this, we believe that in fact this number grows exponentially.

In the transient regime we show the following.

\begin{theorem}[Velocity of the minimal displacement] \label{thm:min}
	Let $(X(u), u\in \TT)$ be a $\lambda$--biased branching random walk as above. 
	Assume that we are in the transient regime, i.e.\ that $m \leq \frac{d_{\min}+\lambda}{2\sqrt{\lambda d_{\min}}}$ and $\lambda < d_{\min}$. We also need to assume 
	\begin{equation}\label{eq:assumption_for_min}
		d_{\min} \geq 2.
	\end{equation}
	Then for $\mathtt{BGW}$--a.e.\ $\omega$, we have
	\begin{equation*}
	\lim_{n\to \infty} \frac{1}{n} \min_{|u|=n} |X(u)|  =  \Tilde{v}_{\lambda, m}, \qquad \Pw-a.s., 
	\end{equation*}
	where $\Tilde{v}_{\lambda, m}\in [0,v_{\lambda})$ is a constant that only depends on $\lambda$, $m$ and the 
	tree offspring law $p$, see \eqref{velocity-descr_min} below.
\end{theorem}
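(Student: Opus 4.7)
My plan is to mirror the proof of Theorem~\ref{thm:max}, replacing upper-tail with lower-tail large deviations for a single $\lambda$-biased walker. Writing $I_\lambda$ now for the full rate function on $[0,1]$ (the extension below $v_\lambda$ is standard), the velocity should be
\[
\Tilde{v}_{\lambda, m} = \inf\{a \in [0,v_\lambda] : I_\lambda(a) \le \log m\}.
\]
The transience assumption $m \le \tfrac{d_{\min}+\lambda}{2\sqrt{\lambda d_{\min}}}$ forces $I_\lambda(0) \ge \log m$, so $\Tilde{v}_{\lambda, m}$ is well defined in $[0,v_\lambda)$ and vanishes exactly at the recurrence/transience boundary.

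For the lower bound $\liminf_n \min_{|u|=n}|X(u)|/n \ge \Tilde{v}_{\lambda, m}$, I would use a first-moment bound. For $a < \Tilde{v}_{\lambda, m}$ one has $I_\lambda(a) > \log m$, and many-to-one yields
\[
\EE\bigl[\#\{|u|=n : |X(u)| \le an\}\bigr] = m^n\, \PP(|Y_n| \le an),
\]
with $(Y_n)$ a single annealed $\lambda$-biased walk. The LDP of \cite{DGPZ-ld} (recalled in Section~\ref{subsection:lambda_biased_rw}) bounds this by $\exp(n(\log m - I_\lambda(a) + o(1)))$, which is summable. Borel--Cantelli along a sequence $a_k \uparrow \Tilde{v}_{\lambda, m}$, together with Fubini to pass from annealed to quenched, gives the claim for $\mathtt{BGW}$-a.e.\ $\omega$.

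The matching upper bound $\limsup_n \min_{|u|=n}|X(u)|/n \le \Tilde{v}_{\lambda, m}$ follows the scheme of Theorem~\ref{thm:max}: an analogue of the zero-one law Proposition~\ref{prop:0-1-law-for-liminf}, applied to the tail event $\{\limsup_n \min_{|u|=n}|X(u)|/n \le a\}$ (invariant under the natural symmetries of $\omega$), reduces the problem to showing that for every $a \in (\Tilde{v}_{\lambda, m}, v_\lambda)$,
\[
\PP\bigl(\exists u \in \TT : |u|=n,\ |X(u)| \le an\bigr) \ge c > 0 \quad \text{for all large }n,
\]
which is the analogue of Proposition~\ref{prop:annealed_survival_proba}. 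The main obstacle is this estimate. As in the maximum case, the strategy is to couple a suitable subfamily of the BRW with a supercritical branching process in random environment whose survival criterion is precisely $I_\lambda(a) < \log m$. Unlike particles realising the maximum, however, those realising the minimum must repeatedly revisit the ancestral region of $\omega$, so their trajectories share environment. The assumption $d_{\min} \ge 2$ is used here to graft successive atypical excursions into disjoint sibling subtrees along a descending spine of slope $\approx a$, thereby restoring the independence needed to produce a genuine supercritical BPRE. Handling the quenched fluctuations of excursion counts well enough for this comparison to go through is the technical heart of the proof.
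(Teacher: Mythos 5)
Your high-level framework matches the paper: the easy direction via the many-to-one formula, Borel--Cantelli and the slowdown LDP, plus a hard direction combining an annealed-survival estimate with a zero--one law. However, your description of the technical heart of the hard direction contains a genuine misconception that would derail the argument.

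You write that particles realising the minimum ``must repeatedly revisit the ancestral region of $\omega$'' and that $d_{\min}\ge 2$ serves ``to graft successive atypical excursions into disjoint sibling subtrees along a descending spine,'' which would require a quite different decoupling from the maximum case. Neither is correct. The paper's decoupling for the minimum is \emph{identical in structure} to the one for the maximum: one tracks particles that cross from level $D_{in}$ to $D_{(i+1)n}$ \emph{without ever revisiting $D_{in}$ after first reaching it}, so the descendant populations across successive blocks remain independent exactly as before. The only change is the time constraint: instead of crossing a block in at most $a^{-1}n$ steps, a particle in $\widehat{\mathcal{Z}}^{(n)}_{i+1}$ must take \emph{at least} $a^{-1}n$ steps, i.e.\ $(a+\varepsilon)^{-1}n > |u| - T^u_{in} \ge a^{-1}n$, while still staying strictly above level $in$. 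Slowdown is achieved by the walk lingering inside the block (in traps created by atypical subtrees), not by backtracking across block boundaries. The one-particle estimate that feeds the BPRE comparison is accordingly the event $\{\tau_\rt > T_n \ge a^{-1}n\}$ in Lemma~\ref{lem:log-min}, i.e.\ the walk is slow \emph{and} avoids the root, so there is no tension between slowness and decoupling, and no need for a disjoint-sibling-subtree construction.

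This also means your explanation for the assumption $d_{\min}\ge 2$ is off: it is not used to manufacture independence. It is needed solely so that the slowdown large deviation principle of DGPZ (Theorem~\ref{lem:ld2}, which requires either $d_{\min}\ge 2$ or $\lambda\ge 1$) is available. In the transient regime one has $\lambda < d_{\min}$, so if $d_{\min}=1$ one would need $\lambda<1$ and $\lambda\ge 1$ simultaneously, making the LDP inapplicable; the paper explicitly flags this limitation rather than circumventing it. With these corrections in place, the rest of your plan (Lemma~\ref{lem:max-log} as the analogue of Lemma~\ref{lem:log}, the BPRE survival comparison as in Lemma~\ref{lem:anneal0}, and the zero--one law) lines up with the paper's Section~\ref{sec:min}--\ref{sec:annealed_min}.
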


The assumption \eqref{eq:assumption_for_min} comes from the slowdown large deviations result \cite[Theorem 1.2]{DGPZ-ld}; see Theorem \ref{lem:ld2} below. With the large deviation rate function $I_{\lambda}\colon [0,v_{\lambda}] \to [0,\infty)$ therein we have the identity  
\begin{equation} \label{velocity-descr_min}
	\Tilde{v}_{\lambda, m}:= \inf\left\{a\in [0, v_{\lambda}]: I_{\lambda} (a) \le \log  m \right\} .
\end{equation}
Let us comment on the properties of $\Tilde{v}_{\lambda, m}$: because $d_{\min}\geq 2$ and $\lambda < d_{\min}$, $I_\lambda$ is strictly decreasing on $[0,v_\lambda]$, continuous at $0$, and $I_\lambda(0)$ is known, see \eqref{rf-at-0} below. This implies that $\Tilde{v}_{\lambda, m}>0$ if and only if $m < \frac{d_{\min}+\lambda}{2\sqrt{\lambda d_{\min}}}$. In the critical case when $m = \frac{d_{\min}+\lambda}{2\sqrt{\lambda d_{\min}}}$, we have $\Tilde{v}_{\lambda, m} = 0$. The branching walk is still transient, which suggests that in the critical case the minimum moves at sublinear speed. Also note that $\Tilde{v}_{\lambda,m} < v_{\lambda,m}$ and that $\Tilde{v}_{\lambda,m}$ is strictly decreasing in $m$ for fixed $\lambda$.

In the case of $d_{\min} = 1$, the branching random walk can only be transient (depending on $m$) if $\lambda < 1$. However this case is not covered by the large deviations result and therefore is out of our reach for the study of the branching random walk. Nevertheless, we believe the same results to be true because the authors of \cite{DGPZ-ld} believe their large deviation result to still be true \cite[Section 7, comment 1]{DGPZ-ld}.

The proof of Theorem \ref{thm:min} is very similar to that of Theorem \ref{thm:max}. We will discuss the differences briefly in Section \ref{sec:main_thms_proofs}.

\subsection{Outline of the proof of Theorem \ref{thm:max}}\label{Outline}

The main difficulty in showing Theorem \ref{thm:max} lies in showing a lower bound, i.e.\ to show that there are particles that realise the speed $(v_{\lambda,m}-\eps)$ for some $\eps>0$ small. Our approach is not dissimilar from the standard approach (see for example \cite[Section 1.4]{shi_branching_2015}) and consists mainly of the following two propositions. 

\begin{proposition}\label{prop:annealed_survival_proba}
	Under the assumptions of Theorem \ref{thm:max} we have for any $a<v_{\lambda,m}$
	\begin{align*}
		&\PP \left( \liminf_{n\to\infty} \max_{\vert u \vert = n} \frac{\vert X(u) \vert}{n} \ge  a\right) > 0.
	\end{align*}
\end{proposition}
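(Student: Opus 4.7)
My plan is to compare the BRW to an embedded, supercritical branching process in random environment (BPRE). Fix an intermediate speed $a' \in (a, v_{\lambda,m})$; by the definition \eqref{velocity-descr} we then have $I_\lambda(a') < \log m$, so the quantity $\gamma := \log m - I_\lambda(a')$ is strictly positive. Combining the quenched LDP recalled in Section~\ref{subsection:lambda_biased_rw} with the many-to-one formula gives, for $\mathtt{BGW}$-a.e.\ $\omega$ and all $k$ large enough,
\[
\Ew\bigl[\#\{|u|=k : |X(u)| \geq a'k\}\bigr] \;=\; m^k \Pw(|S_k|\geq a'k) \;\geq\; e^{(\gamma - o(1))k},
\]
where $(S_n)$ is a single $\lambda$-biased RW started at $\rt$. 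So the first moment already grows exponentially; the work is to turn this into positive-probability exponential growth of an iterated process.

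\textbf{Embedded BPRE.} Fix a large integer $k$. Call a particle $u$ with $|u|=jk$ \emph{good} if $|X(u)| \geq a'jk$, and declare the children of a good $u$ in the auxiliary process to be those good descendants $v$ at generation $(j+1)k$ whose trajectory $(X(u),X(uu_1),\dots,X(v))$ stays inside the subtree of $\omega$ rooted at $X(u)$. Since the subtrees of $\omega$ rooted at distinct vertices are independent $\mathtt{BGW}$ trees and this restriction isolates each good particle's evolution to its own subtree, the offspring counts of distinct good particles are conditionally independent given a sequence of i.i.d.\ environments, so the auxiliary process is a bona fide BPRE. Its quenched offspring mean from a good particle $u$ equals
\[
m^k \, \Pw\bigl(|S_k^{\mathrm r}| \geq a'k \bigm\vert S_0^{\mathrm r} = \rt\bigr),
\]
where $S^{\mathrm r}$ denotes the $\lambda$-biased RW on a fresh $\mathtt{BGW}$ tree, reflected at its root.

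\textbf{Survival and conclusion.} A reflected variant of the LDP (which should follow from the same lines as \cite{DGPZ-ld}) gives that this probability still decays at rate $-I_\lambda(a')$, hence the quenched mean offspring is $\geq e^{\gamma k/2}$ on a set of environments of positive $\mathtt{BGW}$-probability. In particular $\EE[\log(\text{quenched mean})] > 0$, so standard BPRE theory gives $\PP(\text{embedded process survives}) > 0$. On survival there is a good particle at every time $n=jk$, whence $\max_{|u|=jk}|X(u)|/(jk) \geq a'$ along that subsequence; at intermediate times $n \in (jk,(j+1)k)$, following any descendant of the latest good particle loses at most one unit of height per step, so taking $k$ large enough that $a' - 1/k \geq a$ delivers $\liminf_n \max_{|u|=n}|X(u)|/n \geq a$ on the survival event, proving the proposition.

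\textbf{Main obstacle.} The critical technical step, and presumably the source of the ``several decoupling procedures'' alluded to in the outline, is the reflected LDP: one must show that confining the RW to the subtree above its starting vertex does not change the exponential rate, and one must do so with enough uniformity over the $\mathtt{BGW}$ environment to guarantee that the quenched-mean lower bound $e^{\gamma k/2}$ holds on a set of environments large enough (in $\EE[\log(\cdot)]$) to keep the BPRE supercritical. Carrying this out cleanly will likely require first restricting to a ``good'' class of environments (perhaps via a coarse-graining on the tree combined with a uniform LDP, or by restarting from a depth-$o(k)$ descendant so that the reflection is no longer felt on scale $k$) and only then comparing to the BPRE.
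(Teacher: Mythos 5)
Your high-level plan — iterating the BRW across fixed-depth slices, confining each particle's progeny to the fringe subtree above its starting vertex, and comparing the resulting embedded process to a supercritical branching process in random environment — is the strategy the paper uses, and you correctly identify the reflected/killed LDP as a technical ingredient (the paper establishes it as Lemma~\ref{lem:log0} by unwinding the block decomposition of \cite{DGPZ-ld} and reconciling the $*$-measure with the walk in that reference, which holds rather than dies at the root).

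There is, however, a genuine gap at the step ``since the subtrees of $\omega$ rooted at distinct vertices are independent \ldots\ the offspring counts of distinct good particles are conditionally independent given a sequence of i.i.d.\ environments, so the auxiliary process is a bona fide BPRE.'' Two distinct good particles $u,v$ at generation $jk$ can occupy the \emph{same} vertex $X(u)=X(v)=x$ of $\omega$. In that case both families are confined to the same random subtree above $x$, so under $\PP$ their offspring counts are jointly determined by the same environment; they are not independent copies driven by fresh environments, and the claimed BPRE structure fails. This is exactly the obstacle the paper flags in Section~\ref{Outline} (``they have the same law but are not independent any more as they might depend on the same part of $\omega$'') and again at the start of the proof of Lemma~\ref{lem:anneal0}: the embedded process $(\#\mathcal{Z}^{(n)}_i)$ is not a branching process under $\PP$. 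The paper's resolution is to build an auxiliary \emph{genuine} BPRE $(Z'_i)$ in which every particle of a given generation restarts from the root of one fresh environment, and then to prove by induction that its extinction probabilities \emph{dominate} those of the real process, $\theta_{i,\ell}\leq\theta'_{i,\ell}$, using the Jensen-type inequality $\theta_{i+j,\ell}\geq\theta_{i,\ell}\theta_{j,\ell}$ together with the decomposition of $\mathcal{Z}_1$ over the distinct vertices of $D_n$ it occupies. That decoupling argument — merging particles onto a common environment only increases the annealed extinction probability, so the all-merged-per-generation BPRE is a safe comparison — is precisely the ingredient missing from your sketch; without it, the BPRE survival criterion $\EE[\log(\text{quenched mean})]>0$ cannot be applied to your embedded process, and the argument does not close.
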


\begin{proposition}\label{prop:0-1-law-for-liminf}
	Under the assumptions of Theorem \ref{thm:max} we have the following $0-1$ law for any $a>0$:
	\begin{align*}
		&\Pw\left( \liminf_{n\to\infty} \max_{\vert u \vert = n} \frac{\vert X(u) \vert}{n} \geq a\right) > 0 , \quad \text{for} \quad \mathtt{BGW}-a.e. \ \omega \\
		 & \hspace{4cm} \implies \Pw\left( \liminf_{n\to\infty} \max_{\vert u \vert = n} \frac{\vert X(u) \vert}{n} \geq a\right) =1, \quad \text{for} \quad \mathtt{BGW}-a.e. \ \omega.
	\end{align*}
\end{proposition}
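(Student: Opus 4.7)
The plan is to combine a branching inequality, coming from the independence of the sub-BRWs at a given generation, with a BGW 0-1 law for inherited properties. Throughout, write $M_n := \max_{|u|=n}|X(u)|$, $A_a := \{\liminf_n M_n/n \geq a\}$, and $p(\omega,x) := \mathbf{P}_\omega(A_a \mid X(\varnothing)=x)$ for $x\in\omega$. The hypothesis reads $p(\omega,\rt)>0$ for $\mathtt{BGW}$-a.e.\ $\omega$, and the goal is $p(\omega,\rt)=1$ for $\mathtt{BGW}$-a.e.\ $\omega$.

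\emph{Branching inequality.} For each $u\in\TT$, set $M_k^{(u)} := \max\{|X(v)|:v\succeq u,\ |v|-|u|=k\}$ and $A_a^{(u)} := \{\liminf_k M_k^{(u)}/k \geq a\}$. Since $M_{|u|+k}\geq M_k^{(u)}$ and $k/(|u|+k)\to 1$, we have $A_a^{(u)}\subset A_a$. Let $\mathcal{F}_N$ be the $\sigma$-algebra generated by the BRW up to generation $N$; given $\mathcal{F}_N$, the sub-BRWs rooted at the generation-$N$ particles are independent (conditional on their starting positions), so the events $\{A_a^{(u)}\}_{|u|=N}$ are conditionally independent with probabilities $p(\omega,X(u))$. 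Thus
\[
\Pw(A_a\mid \mathcal{F}_N) \;\geq\; 1 - \prod_{|u|=N}\bigl(1 - p(\omega,X(u))\bigr).
\]
Lévy's martingale convergence gives $\Pw(A_a\mid \mathcal{F}_N)\to \mathbf{1}_{A_a}$ $\Pw$-a.s., so it is enough to show that the product above tends to $0$ as $N\to\infty$, $\Pw$-a.s., for $\mathtt{BGW}$-a.e.\ $\omega$.

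\emph{Using the 0-1 law.} Define $G_\epsilon := \{\omega : p(\omega,\rt)\geq \epsilon\}$. The hypothesis gives $\mathtt{BGW}(\bigcup_{\epsilon>0}G_\epsilon)=1$, so some $\epsilon_0>0$ satisfies $\mathtt{BGW}(G_{\epsilon_0})>0$. I then plan to verify that $G_{\epsilon_0}$ is an \emph{inherited} property on the BGW tree, namely that $\omega\in G_{\epsilon_0}$ whenever every subtree $\omega_x$ rooted at a child $x$ of $\rt$ lies in $G_{\epsilon_0}$. Since $p_0=0$ the tree has no leaves, so the standard BGW 0-1 law for inherited properties (e.g.\ Lyons--Peres, Proposition 5.6) then yields $\mathtt{BGW}(G_{\epsilon_0})=1$. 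By BGW-stationarity (each $\omega_x$ is BGW), the analogous statement propagates to every vertex of $\omega$; a coupling that translates positivity of $p(\omega_x,\rt_{\omega_x})$ into positivity of $p(\omega,x)$ then gives $p(\omega,x)\geq \epsilon_0'$ for every $x\in\omega$ and $\mathtt{BGW}$-a.e.\ $\omega$, for a suitable $\epsilon_0'>0$. Since $|\{u:|u|=N\}|\to\infty$ $\Pw$-a.s.\ by $\mu_0=0$ and $\mu_1<1$, each factor in the product is at most $1-\epsilon_0'$, so the product tends to $0$ and $\Pw(A_a)=1$ $\mathtt{BGW}$-a.s., as required.

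\emph{Main obstacle.} The delicate point is the inherited character of $G_{\epsilon_0}$ together with the transfer from $p(\omega_x,\rt_{\omega_x})$ back to $p(\omega,x)$. The BRW on $\omega$ started at $x$ and the BRW on $\omega_x$ started at its root have identical transitions everywhere \emph{except} at $x$ itself: in $\omega$ the walk has the extra $\lambda$-bias back to the parent of $x$, which is absent in $\omega_x$. I would neutralise this single-vertex discrepancy by coupling on the positive-probability event that the walk from $x$ never visits the parent of $x$ (its probability is bounded below by a function of $\kappa_x$ and $\lambda$ alone), obtaining a lower bound $p(\omega,x)\geq c(\kappa_x,\lambda)\,p(\omega_x,\rt_{\omega_x})$. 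Passing from almost-sure positivity with a $\kappa_x$-dependent constant to a uniform $\epsilon_0'$ then requires an additional application of the 0-1 law, or a stochastic control on $\kappa_{X(u)}$ along typical rays, which constitutes the bulk of the technical work.
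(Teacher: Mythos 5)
Your branching inequality and the Lévy--martingale reduction are sound, but the argument breaks down at the ``Using the 0--1 law'' step, and the gap is not merely technical. For Lemma~\ref{lem:inherited} to apply, the property must be inherited in the forward direction: $\omega\in A$ must force $\omega_j\in A$ for every root-child subtree $\omega_j$. (You wrote the converse implication, which does not yield a $0$--$1$ law: it only gives $\alpha\geq\varphi(\alpha)$, which is satisfied by every $\alpha\in[0,1]$ when $p_0=0$.) More importantly, the set $G_{\epsilon_0}=\{\omega: p(\omega,\rt)\geq\epsilon_0\}$ is \emph{not} inherited even in the correct direction: a root whose BRW descendants can concentrate on one ``good'' child subtree can have $p(\omega,\rt)\geq\epsilon_0$ while another child subtree $\omega_j$ has $p(\omega_j,\cdot)$ arbitrarily small. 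So the $0$--$1$ law cannot be applied to $G_{\epsilon_0}$, and the uniform lower bound $\epsilon_0'$ you need is left unestablished. Without such uniformity, the conclusion that $\prod_{|u|=N}(1-p(\omega,X(u)))\to0$ is simply unwarranted -- if, say, $p(\omega,X(u))$ decays geometrically along typical rays faster than $m^{-1}$, the product does not vanish even though each factor is strictly less than $1$.

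The paper sidesteps exactly this obstruction, and in two different ways depending on the regime. In the strongly recurrent regime, Lemma~\ref{lem:rec} gives $q_{a,x}=q_{a,\rt}$ for every $x$ (by coupling via the a.s.\ finite hitting time of the root), so the uniform bound is automatic and an inherited-property argument works at the level of $\Pw$ (Proposition~\ref{prop:rec}). In the transient regime there is no such identity, and instead of seeking a uniform quenched lower bound the paper replaces the fixed generation $N$ by a stopping line $\widetilde{\mathcal{W}}_k$ of representatives at distinct first-hit vertices at level $k$; under the annealed law the fringe subtrees $F_\omega(X(u))$, $u\in\widetilde{\mathcal{W}}_k$, are i.i.d.\ copies of $\mathtt{BGW}$, so one only needs $\Upsilon^*(\omega)>0$ $\mathtt{BGW}$-a.s.\ (no uniformity) together with $|\widetilde{\mathcal{W}}_k|\to\infty$ a.s.\ (Lemma~\ref{lemma:spread}) to conclude $\EE_{\mathtt{BGW}}[\Upsilon(\omega)]\geq 1$. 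The switch from a fixed generation to this stopping line, and from a quenched uniform bound to an annealed i.i.d.\ structure, is the key idea your proposal is missing.
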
 

We show Propositions \ref{prop:0-1-law-for-liminf} and  \ref{prop:annealed_survival_proba} in Sections \ref{sec:0_1_laws} and \ref{sec:annealed_survival_proba} respectively. 

Although Proposition \ref{prop:annealed_survival_proba}~ looks 
close to Theorem \ref{thm:max} already, there are still two more steps to do. One is to go from $>0$ to $=1$ which is provided by the $\Pw$-zero--one law, Proposition \ref{prop:0-1-law-for-liminf}. The other one is to go from the annealed law $\PP$ to the quenched law $\Pw$, for which we shall develop another zero--one law under $\mathtt{BGW}$. The main obstacle in proving Proposition \ref{prop:annealed_survival_proba} is that for two particles $u,v \in \TT$ with $\vert u \vert = \vert v \vert = n$ and $u\neq v$, the descendant populations 
\begin{equation*}
 	\{X(w);w \in \TT,u \prec w \} \quad \text{and} \quad \{X(w');w' \in \TT, v \prec w' \},
\end{equation*}
while being independent under $\Pw$, are not identically distributed because they use slightly different parts of $\omega$. When considered under $\PP$, they have the same law but are not independent anymore as they might depend on the same part of $\omega$. We overcome this by careful analysis and comparing our process to an $\NN_0$--valued branching process in random environment. 

Regarding Proposition \ref{prop:0-1-law-for-liminf}, it is not surprising that this holds as $\liminf_{n\to\infty} \max_{\vert u \vert = n} \frac{\vert X(u) \vert}{n}$ is a tail--measurable random variable for fixed environment $\omega$. Nevertheless, the standard tools like Kingman's subadditivity theorem (as used in \cite[Section 1.4]{shi_branching_2015}) or Kolmogorov's $0-1$ law cannot be applied. Again, the issue is that the increments, in any sense, are not identically distributed. 
 Interestingly, our proofs are different in the strongly recurrent and the transitive regimes. In the transient regime, the crucial ingredient of the proof is that $\omega$, while not being a transitive graph in the usual sense, is \emph{statistically transitive} in the sense that for any $x\in \omega$ the subtree $\{y\in \omega: x \preceq y\}$ has the same law as $\omega$ under $\PP$.

\begin{remark}
	Proposition \ref{prop:0-1-law-for-liminf} depends crucially on Bienaymé--Galton--Watson trees being statistically transitive. Consider a $3$--ary and a $4$--ary tree joined at a common root. While both trees are transitive, the combined tree is not. If the mean offspring $m$ for the branching random walk is close enough to $1$, Proposition \ref{prop:0-1-law-for-liminf} fails (and in fact Theorem \ref{thm:max} as well). Indeed, let $A_3$ and $A_4$ be the events that the branching random walk eventually only occupies the $3$--ary (respectively $4$--ary) part of the tree. Both these events have positive probability in the local extinction phase. On the event $A_3$ (respectively $A_4$), the velocity of the maximal displacement is that of a branching random walk on a (complete) $3$--ary tree (respectively $4$--ary). These velocities are different, hence the velocity of the maximal displacement on the joined tree is random. 
\end{remark}

\subsection{Related work}\label{Related work}

Shape theorems for branching random walks on lattices or on $\mathbb{R}^d$ are a classical topic, going back to the work of \cite{76:biggins:first, J.M.Hammersley1974, 75:kingman:first}, we also refer to
\cite{lyons_probability_2016} and to \cite{shi_branching_2015} for additional references. In the one-dimensional case, not only the linear growth of the maximal/minimal distance to the origin is known, but also the fluctuations, see \cite{aidekon_convergence_2013}. Also, there are, for the one-dimensional case, many finer results about point process convergence of the particle configuration, seen from the maximum/minimum, we again refer to \cite{shi_branching_2015} for references. We remark that in the multidimensional case, there are only few results in this direction but this is a very active field of research.

The question about the linear growth of the maximal/minimal distance to the origin for branching random walks in random (spatial) environment was investigated by \cite{Devulder2007} and \cite{ZhangHouHong2020} for one-dimensional random walk in random environment. For a general model of multi-dimensional random walk in random environments, shape theorems were proven in \cite{CometsPopov2007}. 
In the discrete setup, recurrence and transience for branching Markov chains were investigated in \cite{Benjamini-Peres-indexed},\cite{GantertMueller_critical_branching} and \cite{Mueller2008}.
For branching random walks on Galton-Watson trees, where the underlying Markov chain is a simple random walk, the question about recurrence and transience was answered in \cite{Su2014}.
There are several papers studying the Martin boundary of branching Markov chains, relating it to the properties of the underlying Markov chain, we refer to 
\cite{bertacchi_martin_2024, candellero_boundary_2023, kaimanovich_limit_2023} among others. The authors of \cite{duquesne_scaling_2022} study the range of a critical branching random walk on regular trees and are working on a similar result for random trees. 

In the physics literature, there is interest in the study of F--KPP type reaction--diffusion equations in random environment
  \cite{bianco_reaction_2013, burioni_reaction_2012, hoffman_invasion_2019}. They are linked to branching random walks by duality; in particular the front of F--KPP equation behaves similarly to the maximal displacement of the branching random walk. 
  
For background on the $\lambda$--biased random walk we refer again to \cite{lyons_probability_2016}, to \cite{shi_branching_2015} and to the survey \cite{ben_arous_biased_2016} on biased random walks in random environment and the references therein.

\section{Proofs of main results}
\subsection{Preliminaries: biased random walks on a BGW tree}\label{subsection:lambda_biased_rw}

As mentioned in the introduction, the underlying motion to the BRW is the \emph{$\lambda$--biased random walk} $(S_n)_{n\geq 0}$ on $\omega$. We describe its quenched law $\Pws$. This is a Markov chain starting from $S_0 = \rt$ with transition kernel 
\begin{align*}
	 \Pws(S_{n+1} = i | S_{n} =\rt) &=\frac{1}{\kappa_{\rt}},\quad i=1,2,\ldots, \kappa_{\rt};\\
	\Pws(S_{n+1} = x_- | S_{n} =x) &=\frac{\lambda}{\lambda +\kappa_x}, \quad x\ne \rt;\\
	 \Pws(S_{n+1} = xi | S_{n} =x) &=\frac{1}{\lambda +\kappa_x}, \quad x\ne \rt, i=1,2,\ldots, \kappa_x.
\end{align*}
Let $\PP(\cdot):= \int \Pws(\cdot) \mathtt{BGW}(\dd \omega)$ be the annealed law. 

This is one of the most well-studied models for a random walk in random environment. For an introduction to the simple random walk on random trees see \cite[Chapter 17]{lyons_probability_2016} and for a survey on biased random walks on random graphs see \cite{ben_arous_biased_2016}. One key fact about the random walk is the following. 

\begin{theorem}[Existence of speed {\cite{lyons_biased_1996,PerZei08,Lyo90}}]\label{thm:speedRW}
	 Assume that $\omega$ has no leaves, i.e.\ $p_0=0$ and that $1<m_{\mathtt{BGW}}<\infty$. Then there is $v_\lambda \in [0,\infty)$ such that 
\begin{equation*}
	\Pws\left(\lim_{n \to \infty} \frac{\vert S_n \vert}{n} = v_\lambda\right)=1, \qquad  \mathtt{BGW}-a.s.
\end{equation*}
Moreover, $v_\lambda>0$ if and only if $\lambda<m_{\mathtt{BGW}}$.
\end{theorem}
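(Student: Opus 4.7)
The plan is to exploit the regeneration structure of the walk, treating separately the two regimes $\lambda < m_{\mathtt{BGW}}$ and $\lambda \geq m_{\mathtt{BGW}}$. In the latter case the Lyons conductance criterion (using that the branching number of a supercritical BGW tree equals $m_{\mathtt{BGW}}$) shows that the walk is recurrent, so $|S_n|/n \to 0 =: v_\lambda$; the existence of this limit (and not merely $\liminf = 0$) can be justified by a subadditive ergodic argument or the environment-seen-from-the-walker viewpoint.

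The interesting case is $\lambda < m_{\mathtt{BGW}}$, where I would introduce \emph{regeneration epochs}: let $\tau_1$ be the first $n \geq 1$ such that $|S_n| > |S_k|$ for all $k < n$ and $S_m \neq (S_n)_-$ for all $m > n$, and iterate to obtain $\tau_1 < \tau_2 < \cdots$. Under $\PP$, the subtree rooted at $S_{\tau_k}$ together with the walk restricted to it is an independent copy of a BGW tree coupled with a $\lambda$-biased walk conditioned never to visit the parent of its root; in particular, this law does not depend on $k$. Consequently the vector-valued increments $(\tau_{k+1} - \tau_k,\; |S_{\tau_{k+1}}| - |S_{\tau_k}|)_{k \geq 1}$ are i.i.d.\ under $\PP$, and provided both have finite expectation, the ratio law of large numbers yields
\begin{equation*}
\frac{|S_n|}{n} \;\longrightarrow\; \frac{\EE[|S_{\tau_2}| - |S_{\tau_1}|]}{\EE[\tau_2 - \tau_1]} \;=:\; v_\lambda > 0, \qquad \PP\text{-a.s.,}
\end{equation*}
with strict positivity because the numerator is at least $1$.

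The main obstacle is verifying $\EE[\tau_2 - \tau_1] < \infty$. The difficulty is that between fresh epochs the walker may linger in deep finite subtrees hanging off vertices with small escape probability, and a naive estimate would blow up as $\lambda \uparrow m_{\mathtt{BGW}}$. I would decompose each epoch into a geometric number of failed attempts, each having a uniformly positive success probability equal to the annealed escape probability of a newly entered subtree, and bound the expected cost of a single attempt via a size-biased analysis of the (subcritical) trap subtrees traversed. The strict inequality $\lambda < m_{\mathtt{BGW}}$ enters quantitatively at exactly this point, ensuring a geometric rather than heavy tail on the excursion length.

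Finally, to upgrade from $\PP$-a.s.\ to $\Pws$-a.s.\ convergence for $\mathtt{BGW}$-a.e.\ $\omega$, I would use that for each fixed $\omega$ the event $\{\lim_n |S_n|/n = v_\lambda\}$ lies in the tail $\sigma$-algebra of $(S_n)$, which is $\Pws$-trivial by a standard zero-one law for Markov chains on infinite trees. Hence $\omega \mapsto \Pws(|S_n|/n \to v_\lambda) \in \{0,1\}$, and its $\mathtt{BGW}$-integral being $1$ (from the annealed statement) forces the value $1$ for $\mathtt{BGW}$-a.e.\ $\omega$. The equivalence $v_\lambda > 0 \iff \lambda < m_{\mathtt{BGW}}$ follows by combining this with the recurrence dichotomy from the first paragraph.
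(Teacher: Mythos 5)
This theorem is not proved in the paper at all: it is quoted verbatim from Lyons--Pemantle--Peres \cite{lyons_biased_1996}, so there is no in-paper argument to compare against. Your regeneration-time route is genuinely different from the strategy in that reference (which establishes the speed via the stationary and ergodic ``environment viewed from the walker'' measure $\mathrm{AGW}$, handling the recurrent and transient phases uniformly through a single ergodic-theorem application). The regeneration approach is standard for biased walks on trees and can indeed be made to work; the trade-off is precisely the two issues you already sense in your sketch.

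Two concrete gaps. First, the entire weight of the argument rests on $\EE[\tau_2-\tau_1]<\infty$, and your paragraph addressing it is a plan rather than a proof: a geometric decomposition into ``attempts'' is the right intuition, but the bound on the cost of one failed attempt (the excursion into the already-entered subtree before escape) is where $p_0=0$ must be used and where the estimate is genuinely delicate as $\lambda\uparrow m_{\mathtt{BGW}}$. Without that estimate, one only gets $|S_{\tau_k}|/\tau_k\to v_\lambda$ along the regeneration subsequence, not the full $n\to\infty$ limit, because the interpolation between $\tau_k$ and $\tau_{k+1}$ requires $\tau_{k+1}-\tau_k=o(\tau_k)$, i.e.\ finite mean.

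Second, your final upgrade from the annealed to the quenched statement is both unnecessary and shaky. Once you have $\PP(\lim_n|S_n|/n=v_\lambda)=1$, the elementary fact that $0\le\Pws(\cdot)\le1$ forces
\[
\Pws\Big(\lim_{n\to\infty}\tfrac{|S_n|}{n}=v_\lambda\Big)=1\quad\text{for }\mathtt{BGW}\text{-a.e.\ }\omega,
\]
since a $[0,1]$-valued function with integral $1$ equals $1$ almost everywhere. No zero-one law is needed. Moreover, the one you invoke --- triviality of the tail $\sigma$-algebra of a Markov chain on a tree --- is false in the transient regime (the tail remembers the end of the tree along which the walk escapes), so if you do want a zero-one argument you would have to restrict to a shift-invariant or exchangeable sub-$\sigma$-algebra, not the full tail. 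In the recurrent case $\lambda\ge m_{\mathtt{BGW}}$ you correctly flag that recurrence alone gives only $\liminf|S_n|/n=0$ and a further argument is required for $\limsup$; the cleanest such argument is exactly the ergodic one from \cite{lyons_biased_1996}, which rather undercuts the case for preferring regeneration times here.
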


To be more precise, \cite[Theorem 3.1]{lyons_biased_1996} shows that $v_\lambda>0$ when $\lambda<m_{\mathtt{BGW}}$,  \cite[Theorem~1]{PerZei08} implies $v_\lambda=0$ when $\lambda=m_{\mathtt{BGW}}$, and, if $\lambda>m_{\mathtt{BGW}}$, the positive recurrence \cite[Page 944]{Lyo90} implies $v_\lambda=0$. 

The function $\lambda \mapsto v_\lambda$ is still not fully understood; in particular, the monotonicity is a well-known open question except for small $\lambda$, see \cite{ben_arous_lyons-pemantle-peres_2014}. Also, Aïdekon \cite{aidekon_speed_2014} has derived an explicit formula for $v_\lambda$.

Another result about the $\lambda$--biased random walk which is crucial to us is the existence of a large deviation principle. 

\begin{theorem}[Large deviations, speed-up probabilities {\cite[Theorem~1.1]{DGPZ-ld}}]\label{lem:ld}
	Let $\lambda \geq 0$ and suppose that  $m_{\mathtt{BGW}}<\infty$. Then, there exists a continuous, convex, strictly increasing function 
	$I_{\lambda}: [v_{\lambda},1]\mapsto [0,\infty)$, with 
	\[
		I_{\lambda}(v_{\lambda})= 0 \quad \text{and} \quad I_{\lambda}(1) = -\log \sum_{k\ge 1} \frac{k}{k+\lambda} p_k, 
	\] 
	satisfying, for $b>a$, $a\in (v_{\lambda},1]$, 
	\begin{equation}
 		\lim_{n\to \infty} \frac{1}{n}\log \Pws \Big( \frac{|S_n|}{n}\in [a,b) \Big)
 		= \lim_{n\to \infty}  \frac{1}{n} \log \PP \Big( \frac{|S_n|}{n}\in [a,b) \Big) = -I_{\lambda}(a), \qquad 		\mathtt{BGW}-a.s.
	\end{equation}
\end{theorem}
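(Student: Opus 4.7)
The plan is to obtain the LDP by reducing it to a classical LDP for i.i.d.\ sums via the regeneration structure of the walk on $\omega$. Since $\lambda < m_{\mathtt{BGW}}$ the walk is transient, so one can define the fresh-start ladder regeneration times $\tau_1 < \tau_2 < \cdots$, where
$\tau_1 := \inf\{n\ge 1 : |S_k| < |S_n| \text{ for all } k<n,\ |S_k| \ge |S_n|\text{ for all } k>n\}$
and the subsequent $\tau_k$ are defined inductively by restarting the procedure in the subtree above $S_{\tau_{k-1}}$. The Lyons--Pemantle--Peres argument shows that, under the annealed law $\PP$, the blocks $(\tau_{k+1}-\tau_k,\, H_k)$ with $H_k := |S_{\tau_{k+1}}|-|S_{\tau_k}|$ are i.i.d.\ for $k\ge 1$, that $H_k$ has exponential tails, and that $\tau_2-\tau_1$ has a finite exponential moment in a neighbourhood of $0$; the speed is then $v_\lambda = \Es[H_1]/\Es[\tau_2-\tau_1]$.

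With this in hand, I would derive the annealed LDP by Cram\'er/Mogulskii theory for the bivariate i.i.d.\ sequence $(\tau_{k+1}-\tau_k,\, H_k)$. For $a\in (v_\lambda,1]$, setting $N_n := \max\{k : \tau_k\le n\}$, the event $\{|S_n|\ge an\}$ is, up to a boundary term of order $\tau_{N_n+1}-\tau_{N_n}$, the event $\sum_{k=1}^{N_n} H_k\ge an$ under the constraint $\sum_{k=1}^{N_n}(\tau_{k+1}-\tau_k)\le n$. An exponential Chebyshev bound in both coordinates, optimised over the number of regeneration blocks, yields
\[
\limsup_{n\to\infty}\frac{1}{n}\log \PP\bigl(|S_n|\ge an\bigr)\;\le\; -I_\lambda(a),
\]
with $I_\lambda$ expressible as a Legendre-type transform of the log-moment generating function of $(H_1,\tau_2-\tau_1)$. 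The matching annealed lower bound follows by exponentially tilting the joint law of $(\tau_2-\tau_1, H_1)$ so that the tilted mean velocity is $a$, running $\approx n/\Es^{\mathrm{tilt}}[\tau_2-\tau_1]$ tilted blocks, and undoing the tilt on a typical trajectory.

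The main obstacle will be upgrading these annealed bounds to a quenched statement valid for $\mathtt{BGW}$-a.e.\ $\omega$. The approach is to exploit the branching structure: the subtrees of $\omega$ rooted at a given depth (say $\lfloor n^{1/2}\rfloor$) are i.i.d.\ $\mathtt{BGW}$-trees, so along a ballistic trajectory the walk encounters a long sequence of i.i.d.\ random environments. Writing $\log \Pws(|S_n|/n \in [a,b))$ as an approximately additive functional of these i.i.d.\ subtrees and applying an Azuma-type concentration inequality on the subtree-revealing filtration shows that the quenched log-probability concentrates around its $\mathtt{BGW}$-expectation at a rate much faster than $1/n$; Borel--Cantelli then converts the exponential concentration into $\mathtt{BGW}$-a.s.\ convergence at the annealed rate. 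Some care is required because the walk may revisit a subtree before regenerating, but the exponential tail of $\tau_2-\tau_1$ confines these revisits to a vanishing fraction of the $n$-step trajectory.

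Finally, the endpoint value $I_\lambda(1)$ is obtained by a direct computation. On $\{|S_n|=n\}$ the walk never backtracks, and for each path $\rt = x_0, x_1, \ldots, x_n$ the quenched probability equals $\kappa_\rt^{-1}\prod_{i=1}^{n-1}(\kappa_{x_i}+\lambda)^{-1}$. Summing over paths and taking $\EE$ via the BGW many-to-one formula --- which replaces each spine offspring count by a size-biased copy with law $kp_k/m_{\mathtt{BGW}}$ --- yields $\EE[\Pws(|S_n|=n)] = \bigl(\sum_k \tfrac{k}{k+\lambda}p_k\bigr)^{n-1}$, whence $I_\lambda(1) = -\log \sum_k \tfrac{k}{k+\lambda}p_k$ as stated. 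Convexity and strict monotonicity of $I_\lambda$ on $[v_\lambda,1]$, together with $I_\lambda(v_\lambda)=0$ and continuity, then follow from its Legendre-transform representation and the finite exponential moments of $(\tau_2-\tau_1, H_1)$.
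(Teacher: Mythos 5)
This theorem is not proved in the paper at all: it is imported verbatim from \cite{DGPZ-ld} (Theorem~1.1 there), and the paper uses it as a black box. There is therefore no ``paper's own proof'' to compare against; what follows assesses your reconstruction against the strategy actually used in the cited reference.

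Your plan is a genuinely different route from \cite{DGPZ-ld}. That paper works directly with the hitting times $T_n=\inf\{j:|S_j|=n\}$ and establishes the rate function through a random superadditivity argument for $\log\Pws(T_n\le a^{-1}n,\,\tau_\rt>T_n)$ over fringe subtrees, combined with a dedicated ``uncertainty estimate'' (their Proposition~4.1) that controls how far the quenched hitting probability can deviate from its annealed counterpart. You instead propose the Lyons--Pemantle--Peres regeneration decomposition plus Cram\'er/Mogulskii for the i.i.d.\ blocks $(\tau_{k+1}-\tau_k,H_k)$. The annealed half of your plan is plausible: for the speed-up direction only lower-tail moments of the regeneration time are needed, so the exponential Chebyshev bound and the tilting lower bound both go through, and your many-to-one computation of $I_\lambda(1)=-\log\sum_k\frac{k}{k+\lambda}p_k$ is correct.

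The genuine gap is the quenched upgrade. Jensen's inequality gives $\EE_{\mathtt{BGW}}[\log\Pws(\cdot)]\le\log\PP(\cdot)$, so concentration of $n^{-1}\log\Pws$ around its $\mathtt{BGW}$-mean only yields the quenched \emph{upper} bound (decay at least as fast as the annealed rate) for free. The hard content of the theorem is the reverse: that the quenched rate is not \emph{strictly worse} than the annealed one, i.e.\ that $n^{-1}\EE_{\mathtt{BGW}}[\log\Pws]$ converges to $-I_\lambda(a)$ rather than to something more negative. Your proposal silently assumes this. Moreover the Azuma step you invoke to get the concentration needs a bounded-differences property for $\log\Pws\bigl(|S_n|/n\in[a,b)\bigr)$ with respect to resampling a single fringe subtree; this is far from immediate --- altering one subtree can change the set of available fast rays drastically, and you give no Lipschitz bound. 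Note also that the analogous slowdown statement requires extra hypotheses precisely because this annealed--quenched coincidence is delicate, so any argument must exploit the speed-up regime somewhere; yours does not. Replacing the Azuma heuristic by the superadditivity/uncertainty machinery of \cite{DGPZ-ld} (which is exactly what the paper's Lemma~2.9 taps into via \cite[(4.2) and Proposition~4.1]{DGPZ-ld}) would close the gap, but as written the quenched half of your argument is incomplete.
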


\begin{theorem}[Large deviations, slow-down probabilities {\cite[Theorem~1.2]{DGPZ-ld}}]\label{lem:ld2}
	Suppose that  $\lambda < m_{\mathtt{BGW}}<\infty$ and that 
	\begin{equation}\label{eq:ldc}
		\text{either}~ d_{\min}\ge 2 ~\text{or}~ \lambda\ge 1. 
	\end{equation}
	Then, there exists a continuous, convex, nonncreasing function 
	$I_{\lambda}: [0,v_{\lambda}]\mapsto [0,\infty)$, with $I_{\lambda}(v_{\lambda})= 0$, such that for $0\le
 	b<a<v_{\lambda}$, 
	\begin{equation}
		\lim_{n\to \infty} \frac{1}{n}\log \Pws \Big( \frac{|S_n|}{n}\in [b,a) \Big)
		= \lim_{n\to \infty}  \frac{1}{n} \log \PP \Big( \frac{|S_n|}{n}\in [b,a) \Big) = -I_{\lambda}(a), \qquad 		\mathtt{BGW}-a.s.
	\end{equation}
	If $\lambda\ge d_{\min}$, then $I_{\lambda} (a)=0$ for $a\in [0,v_{\lambda}]$. 
	If $\lambda< d_{\min}$, then $I_{\lambda}$ is strictly decreasing on $[0,v_{\lambda}]$ and 
\begin{equation}\label{rf-at-0}
I_{\lambda}(0) = \lim_{a\downarrow 0} I_{\lambda}(a)= \log \Big(\frac{d_{\min}+\lambda}
	{2\sqrt{\lambda d_{\min}}}\Big) .
\end{equation}
\end{theorem}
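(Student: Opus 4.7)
The plan is to prove the slowdown LDP by combining a renewal/regeneration decomposition along the tree with a spectral computation identifying $I_\lambda(0)$.

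First, I would introduce regeneration levels $\tau_1 < \tau_2 < \dots$, defined as the successive times at which $|S_n|$ reaches a fresh maximum height that is never revisited from below. Under the annealed measure the increments $\tau_{k+1}-\tau_k$ are i.i.d., with $\EE[\tau_1] = 1/v_\lambda$ whenever $v_\lambda > 0$. The slowdown event $\{|S_n|/n \le a\}$ for $a \in (0, v_\lambda)$ then essentially coincides with $\{\tau_{\lfloor a n \rfloor} \ge n\}$, a Cramér-type lower tail for a sum of i.i.d.\ regeneration times with the number of summands linear in $n$. Provided one verifies the necessary exponential integrability of $\tau_1$ (using trap-depth estimates of Lyons--Pemantle--Peres), classical Cramér theory yields a convex continuous annealed rate, which after the change of variables $a = k/n$, $1/a = \tau_k/k$, gives $I_\lambda$ on $(0, v_\lambda]$ along with its qualitative properties.

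Second, to identify $I_\lambda(0)$ under $\lambda < d_{\min}$, I would use a localization argument. For the lower bound, I would produce with positive $\mathtt{BGW}$-probability a finite $d_{\min}$-ary subtree rooted at or near $\rt$; conditioned on entering it, the walk restricted to such a subtree is $\lambda$-biased random walk on a $d_{\min}$-regular tree, whose spectral radius is known to equal $\rho = 2\sqrt{\lambda d_{\min}}/(d_{\min}+\lambda)$, so $\Pws(|S_n| \le \varepsilon n) \ge c\, \rho^{n}$ up to subexponential factors. For the matching upper bound, each trajectory of length $n$ returning to height $O(\varepsilon n)$ must contain nearly equal numbers of up- and down-steps, and the product of quenched transition weights at each visited vertex $x$ is bounded by a geometric mean of the form $2\sqrt{\lambda\kappa_x}/(\lambda+\kappa_x)$, which is maximized precisely at $\kappa_x=d_{\min}$ when $\lambda < d_{\min}$. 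Summing over paths and using $\EE_{\mathtt{BGW}}[\#\text{paths}]$ bounds yields the matching exponential rate.

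Third, to upgrade from the annealed limit to a $\mathtt{BGW}$-a.s.\ quenched limit, I would exploit the spatial independence of disjoint subtrees. Splitting the first $n$ steps into many independent macro-pieces that explore distinct subtrees of $\omega$ produces a sum of near-independent contributions to $\log \Pws(|S_n|/n\in[b,a))$, to which Azuma--Hoeffding applies on the exponential scale; Borel--Cantelli then fixes the quenched rate at its annealed value. The main obstacle is the matching upper bound for $I_\lambda(0)$: one must exclude hybrid strategies that visit vertices of varying degree rather than localizing purely in a $d_{\min}$-subtree. This is precisely where the assumption $d_{\min}\ge 2$ or $\lambda\ge 1$ enters, since a degree-one chain would allow the walker to escape without truly slowing down, breaking the clean spectral identification of $I_\lambda(0)$.
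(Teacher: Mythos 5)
The paper does not prove Theorem~\ref{lem:ld2}; it quotes it verbatim from Dembo--Gantert--Peres--Zeitouni \cite[Theorem~1.2]{DGPZ-ld}, so there is no ``paper's own proof'' to match against. Comparing your proposal to the actual DGPZ argument (which the paper dissects in Lemmas~\ref{lem:log0} and~\ref{lem:log-min}): DGPZ do not use a regeneration-time decomposition but a renewal structure at hitting times of levels $n_1, n_1+\Delta, n_1+2\Delta, \dots$, together with a quenched ``uncertainty estimate'' (their Propositions~4.1 and~5.1) to pass from annealed to quenched. Your proposal takes a genuinely different route and, while the ingredients are plausible, there are concrete gaps.

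First, the regeneration reduction is not as innocuous as you present. The slow-down event $\{|S_n|\le an\}$ is an \emph{upper}-tail event for $\tau_{\lfloor c a n\rfloor}$ (you write ``Cram\'er-type lower tail''), and Cram\'er's theorem for upper tails requires $\EE[e^{\theta\tau_1}]<\infty$ for some $\theta>0$, i.e.\ an exponential \emph{upper} tail for $\tau_1$. But establishing that $\Pw(\tau_1>t)$ decays exponentially with the correct rate is itself a slow-down estimate governed by exactly the spectral radius $e^{-I_\lambda(0)}$ you are trying to identify. Your parenthetical ``provided one verifies the necessary exponential integrability (using trap-depth estimates of Lyons--Pemantle--Peres)'' sweeps this under the rug: those estimates do not, by themselves, give exponential moments of $\tau_1$ in the presence of $(d_{\min})$-ary bottleneck regions, and if you did prove the sharp tail $\Pw(\tau_1>t)\asymp \rho_\omega(\lambda)^t$, you would already essentially have the theorem without Cram\'er. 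Moreover, your reduction also quietly conflates levels and regeneration indices: the level increments at regenerations are themselves random, so $\{|S_n|\le an\}$ and $\{\tau_{\lfloor a n\rfloor}\ge n\}$ differ by a random linear rescaling that must be controlled.

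Second, the upper bound for $I_\lambda(0)$ is gestured at but not a proof. Bounding ``the product of quenched transition weights at each visited vertex $x$ \dots by a geometric mean $2\sqrt{\lambda\kappa_x}/(\lambda+\kappa_x)$'' is a statement about a single path; you then need to sum over paths in a tree with random, unbounded degrees, and the entropy term (the number of admissible up--down excursions together with the choice of child at each up-step) is exactly what produces the factor $2\sqrt{\kappa_x}$ in $\rho$. For a $\kappa$-regular tree this is a standard spectral computation; for a BGW tree with heavy-tailed or merely unbounded offspring, one must show that no quenched realization of $\omega$ admits slowdown strategies beating $e^{-I_\lambda(0)n}$, which is not a simple union bound. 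Third, the quenched$=$annealed step via Azuma--Hoeffding is asserted but not supported: $\log\Pws(|S_n|/n\in[b,a))$ is not a bounded-difference (or even sub-Gaussian) functional of the vertex degrees, because a single $(d_{\min})$-ary region of depth $O(n)$ near the root can change the log-probability by $\Theta(n)$. One needs exactly the kind of block/renewal estimate DGPZ use, not naive concentration. Finally, your explanation of the hypothesis $d_{\min}\ge2$ or $\lambda\ge1$ is off: the spectral-radius formula $2\sqrt{\lambda d_{\min}}/(d_{\min}+\lambda)$ is perfectly sensible for $d_{\min}=1$, $\lambda<1$; the hypothesis enters the DGPZ proof as a technical condition for their quenched control (and the present authors explicitly conjecture the result holds without it), not because ``a degree-one chain would allow the walker to escape.''
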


\subsection{Strong recurrence and transience, proof of Theorem~\ref{thm:ls}}\label{subsection:recurrence_and_transience}

The recurrence property of a BRW is closely related to the \emph{spectral radius} of the $\lambda$-biased random walk $(S_n)$ of a single particle on the tree $\omega$. This is the following number, which is known to be the same for every $x,y\in \omega$:  
\begin{equation}\label{spectral-radius-def}
\rho_{\omega}(\lambda):=\limsup_{n\to \infty} \Pws(S_n= y \mid S_0= x)^{\frac{1}{n}}. 
\end{equation}

\begin{proposition}\label{prop:sr}
	Suppose that  $m_{\mathtt{BGW}}<\infty$. 
	Then  for $\mathtt{BGW}$-a.e.\ $\omega$,  
		\begin{equation}
		\rho_{\omega}(\lambda)=
		\begin{cases}
			1,& \lambda\ge d_{\min}; \\
			\frac{2\sqrt{\lambda d_{\min}}}{d_{\min}+\lambda} , & \lambda\in (0,d_{\min}). 				
		\end{cases}
	\end{equation}
\end{proposition}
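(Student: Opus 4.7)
The plan is to sandwich $\rho_{\omega}(\lambda)$ between matching upper and lower bounds by comparison with the $\lambda$-biased walk on the deterministic $d_{\min}$-regular tree. On that tree the height process $(|S_n|)_{n\geq 0}$ is a birth-death chain on $\{0,1,2,\dots\}$ with up-probability $p=d_{\min}/(\lambda+d_{\min})$ and down-probability $q=\lambda/(\lambda+d_{\min})$ (reflected at $0$), whose spectral radius is $2\sqrt{pq}=2\sqrt{\lambda d_{\min}}/(\lambda+d_{\min})$ in the transient case $\lambda<d_{\min}$ and $1$ in the (null or positive) recurrent case $\lambda\geq d_{\min}$. It therefore remains to transfer this exact value to $\omega$.

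For the upper bound I would use stochastic domination. Since $\kappa_x\geq d_{\min}$ for every non-root $x\in\omega$, the probability $\kappa_x/(\lambda+\kappa_x)$ that $S$ moves further from the root is at least $d_{\min}/(\lambda+d_{\min})$. One can therefore couple $(|S_n|)$ under $\Pws$ with a reflected birth-death chain $(Y_n)$ of up-probability $p=d_{\min}/(\lambda+d_{\min})$ so that $|S_n|\geq Y_n$ for every $n$ almost surely. Hence $\Pws(S_n=\rt)=\Pws(|S_n|=0)\leq \Pm(Y_n=0)$, and taking $n$-th roots gives $\rho_\omega(\lambda)\leq 2\sqrt{\lambda d_{\min}}/(\lambda+d_{\min})$ in the transient case; for $\lambda\geq d_{\min}$ the trivial bound $\rho_\omega(\lambda)\leq 1$ suffices.

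For the lower bound the key observation is the conductance identity $\pi(x)\,\Pws(S_{n+1}=y\mid S_n=x)=\lambda^{-|x|}$ for every transition from $x\neq\rt$ to a child $y$, where $\pi(x):=\lambda^{-|x|}(\lambda+\kappa_x)$ is reversible for the $\lambda$-biased walk. Remarkably this edge conductance depends only on $|x|$, not on $\kappa_x$, so any $d_{\min}$-regular sub-tree of $\omega$ is \emph{isomorphic as a weighted reversible graph} to the $\lambda$-biased walk on the deterministic $d_{\min}$-regular tree (up to an overall multiplicative constant $\lambda^{-|r|}$ that cancels in Rayleigh quotients). I would then argue that, $\mathtt{BGW}$-a.s., for every $N\in\NN$ there exists $r\in\omega$ with $\kappa_x=d_{\min}$ for every $x$ satisfying $r\preceq x$ and $|x|-|r|\leq N$---a Borel--Cantelli argument exploiting $p_{d_{\min}}>0$ and the independence of the sub-trees descending from distinct vertices at the same level, together with the a.s.\ divergence of $|D_n|$. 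For test functions $f$ supported in such a ``thin tube'' of depth $N$ the Rayleigh quotient $\langle P f,f\rangle_\pi/\|f\|_\pi^2$ coincides exactly with its counterpart on the $d_{\min}$-regular tree. Invoking the variational characterization of the spectral radius of the self-adjoint operator on $\ell^2(\pi)$ and choosing $f$ along a sequence of finitely supported approximate eigenfunctions of the $d_{\min}$-regular walk---indicators of Folner balls in the amenable regime $\lambda\geq d_{\min}$, Weyl sequences for the continuous part of the spectrum when $\lambda<d_{\min}$---gives $\rho_\omega\geq 1$, respectively $\rho_\omega\geq 2\sqrt{\lambda d_{\min}}/(\lambda+d_{\min})$, matching the upper bound.

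The hardest step will be the lower bound. The transplantation of Rayleigh quotients into $\omega$ is a deterministic consequence of the conductance identity, but two auxiliary points need some care: the a.s.\ existence of arbitrarily deep $d_{\min}$-regular sub-balls (which is where the assumption $p_{d_{\min}}>0$ enters crucially and uses independence of offspring along distinct lines of $\omega$), and the construction of finitely supported approximate eigenfunctions on the deterministic $d_{\min}$-regular tree for $\lambda<d_{\min}$, which requires invoking the explicit spectral decomposition of the height birth-death chain to exhibit oscillating test functions with Rayleigh quotient arbitrarily close to $2\sqrt{\lambda d_{\min}}/(\lambda+d_{\min})$.
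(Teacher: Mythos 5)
Your upper bound (coupling $|S_n|$ with a birth--death chain on $\ZZ_{\geq 0}$ of up-probability $p=d_{\min}/(\lambda+d_{\min})$) is sound and gives a clean alternative to the paper's route through the large-deviation rate function $I_\lambda$. Your lower bound, however, has a real gap in the regime $d_{\min}\le\lambda<m_{\mathtt{BGW}}$, where the proposition claims $\rho_\omega(\lambda)=1$.

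The issue is this: when you transplant a $d_{\min}$-regular ball $A=A(r,N)$ into $\omega$ and restrict to test functions supported on $A$, you are effectively computing the Rayleigh quotient of the walk on $A$ with \emph{Dirichlet conditions at both the top and the bottom of the ball}. The top Dirichlet condition has no counterpart on the infinite $d_{\min}$-regular tree (whose root has no parent), so the Rayleigh quotients do \emph{not} coincide. Concretely, the conductance of the edge $r\sim r_-$ is $\lambda^{-|r|+1}$, a fixed positive quantity which does not vanish as $N\to\infty$; for the indicator $f=\mathbf 1_A$ with $\lambda>d_{\min}$ one finds
\begin{equation*}
\frac{\langle Pf,f\rangle_\pi}{\|f\|_\pi^2}\;\longrightarrow\;1-\frac{\lambda-d_{\min}}{\lambda+d_{\min}}=\frac{2d_{\min}}{\lambda+d_{\min}}\;<\;1,
\end{equation*}
and optimizing over all $f$ supported in $A$ only improves this to $2\sqrt{pq}\cos\bigl(\pi/(N+2)\bigr)\to 2\sqrt{\lambda d_{\min}}/(\lambda+d_{\min})<1$ (conjugate the killed height chain by $(q/p)^{k/2}$ to reduce to a symmetric tridiagonal matrix). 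Probabilistically the point is that for $\lambda>d_{\min}$ the walker drifts \emph{towards} $r$ and escapes through the parent edge in $O(1)$ time, so a pure $d_{\min}$-regular ball is not a trap. This is exactly why the paper, in the regime $d_{\min}\le\lambda<m_{\mathtt{BGW}}$, switches to the asymmetric two-part trap: $L$ levels of a $d_0$-ary tree with $d_0>m_{\mathtt{BGW}}>\lambda$ (so the walker is pushed \emph{into} the trap from the top) followed by $L$ levels of a $d_{\min}$-ary tree (pushing the walker back up from the bottom), yielding a recurrent asymmetric walk on $\{-L,\dots,L\}$ and $\rho^A_\omega\to1$. Your ``F\o lner/amenability'' heuristic is valid on $T_{d_{\min}}$ itself (where $f\equiv1\in\ell^2(\pi)$), but it does not survive the truncation needed to embed a finite piece into $\omega$. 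For $\lambda<d_{\min}$ your Rayleigh-quotient argument does work and is a reasonable variational counterpart to the paper's killed-survival-probability estimate; but without replacing the symmetric ball by a two-sided trap, the case $d_{\min}<\lambda<m_{\mathtt{BGW}}$ is not covered.
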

For $\lambda=1$, this result can also be found in \cite[Proposition~3.5]{Su2014}. 

\begin{proof}
	If the $\mathtt{BGW}$ tree has a.s.\ bounded degree, then it follows from \cite[Lemma~2.8]{Mueller2008} that 
	$\rho_{\omega}(\lambda)  = e^{-I_\lambda(0)}$, with $I_\lambda(0)$ as in Theorem~\ref{lem:ld2}, which leads to the desired statement under the additional assumption \eqref{eq:ldc}. 
	
	In general, without these further assumptions, we provide a direct proof, using ideas from the proof of \cite[Lemma~2.8]{Mueller2008}. 
	For $s,t\in (0,1)$ let $H(s\vert t)= s\log \frac{s}{t} + (1-s)\log \frac{1-s}{1-t}$, the relative entropy between two Bernoulli distributions.  For $\lambda\in (0,d_{min})$, with $p = \frac{d_{\min}}{d_{\min}+\lambda}$, we have $H\left( \frac{1}{2} \middle\vert p \right) = -\log \left(\frac{2\sqrt{\lambda d_{\min}}}{d_{\min}+\lambda}\right) $.
	For $\mathtt{BGW}$-a.e.\ $\omega$, we have $\Pws(S_n= \rt)\le \Pws(|S_n| \le n \varepsilon)$ for each $\varepsilon>0$. Letting $\eps \to 0$, we have by definition \eqref{spectral-radius-def} that 
	\[
	\log \rho_{\omega}(\lambda)\le \limsup_{\eps \to 0}\limsup_{n\to \infty} \frac{1}{n} \log \Pws(S_n= \rt) \le \limsup_{\eps \to 0}\limsup_{n\to \infty} \frac{1}{n} \log\Pws(|S_n| \le n \varepsilon) \le -H\left( \frac{1}{2} \middle\vert p \right),
	\]
	where the last inequality follows from \cite[Lemma 2.1 and (3.5)]{DGPZ-ld}. 
	This gives the upper bound $\rho_{\omega}(\lambda)\le \frac{2\sqrt{\lambda d_{\min}}}{d_{\min}+\lambda}$ when $\lambda<d_{\min}$. 
	When $\lambda\ge d_{\min}$, we trivially have $\rho_{\omega}(\lambda) \leq 1$ because any spectral radius is bounded by $1$. This yields the desired upper bound. 
	
	To obtain the lower bound for $\rho_{\omega}(\lambda)$, we treat the two cases $\lambda<d_{\min}$ and  $\lambda \ge d_{\min}$ separately.
	 We first consider $\lambda<d_{\min}$. 
	 Let $L\in \NN$ and $x\in \omega$. Denote by $A(x,2L)$ the first $2L$ levels of the fringe tree rooted at $x$, i.e.\ $A(x,2L)= \{y\in \omega\colon x\preceq y, |y|\le |x|+2L \}$. Call $x$ a $(d_{\min},2L)$--\emph{nice} vertex if $A(x,2L)$ is a $(d_{\min})$--ary tree.

\begin{figure}[tbh]
		\begin{center}
		\includegraphics[width=0.7\textwidth]{./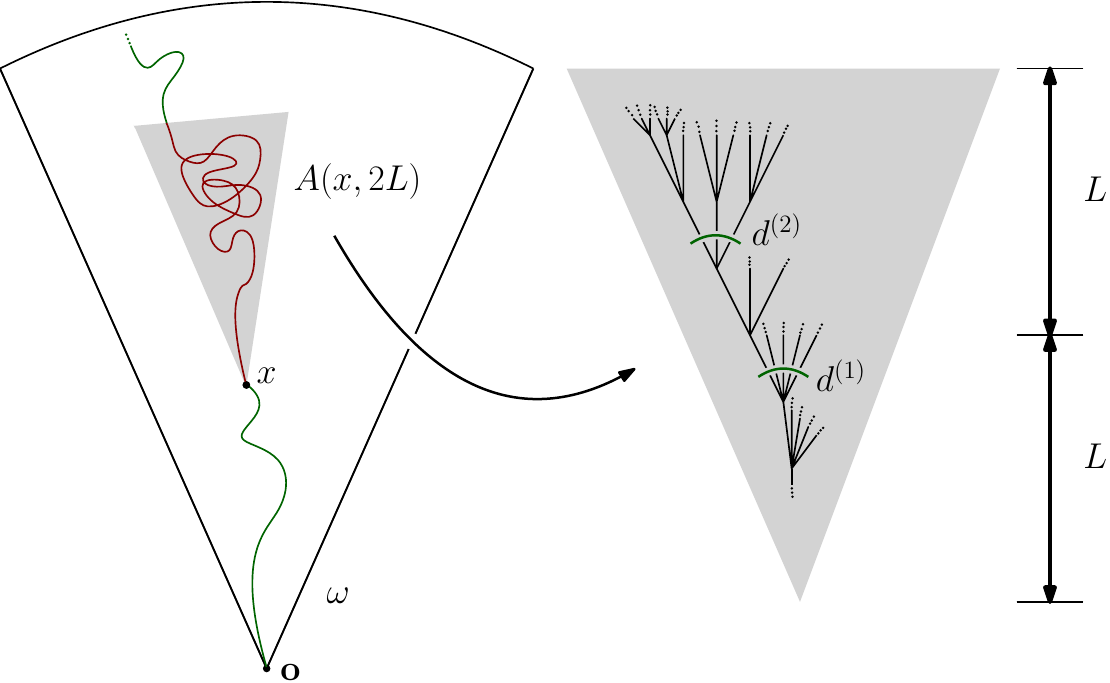}
		\caption{The spectral radius $\rho_\omega(\lambda)$ is determined by atypical regions of $\omega$: the set $A(x,2L)$ consists of $L$ levels of a $d^{(1)}$--ary tree and $L$ levels of a $d^{(2)}$--ary tree. The choice of $(d^{(1)},d^{(2)})$ depends on $\lambda$: if $\lambda < d_{\min}$ we choose $d^{(1)}=d^{(2)}=d_{\min}$ and if $d_{\min} \leq \lambda < m_{\mathtt{BGW}}$ we choose $d^{(1)}=d_0$ and $d^{(2)}=d_{\min}$ where $d_0>m_{\mathtt{BGW}}$. Then the set $A(x,2L)$ acts as a trap for the random walk. Similarly, for the branching random walk, $A(x,2L)$ acts as a seed which facilitates local survival.}
		\label{fig:spectral_radius}
		\end{center}
\end{figure}
	 
	 Such $x$ exists for $\mathtt{BGW}$-a.e.\ $\omega$ and for simplicity we write $A:= A(x,2L)$. Consider a $(d_{\min},2L)$--nice vertex and let $\mathrm{P}^{A}_{\omega}$ be the law of a $\lambda$-biased random walk killed on leaving the subgraph $A\subseteq \omega$. Choose $y_0 \in A$ arbitrarily and denote the spectral radius in $A$ by $\rho^A_{\omega}(\lambda):=\limsup_{n\to \infty} \Pws^{A}(X_n= y_0\mid X_0 = y_0)^{\frac{1}{n}}$.  
	  Naturally we have $\Pws^{A}(X_n= y_0\mid X_0 = y_0)\le \Pws(X_n= y_0\mid X_0 = y_0)$ and therefore $\rho_\omega^A(\lambda) \leq \rho_\omega(\lambda)$. Therefore it suffices to study $\rho^A_{\omega}(\lambda)$ in order to obtain a lower bound.
	 
	 Let $y_0\in A$ so that $|y_0| = |x|+ L$. Consider a random walk $(S_n)$ starting from $S_0 = y_0$ and denote  by $T$ the first time that the random walk reaches the boundary of $A$, i.e.\ $|S_n| = |x|$ or $|S_n|= |x|+2L$. Because $A$ is a $(d_{\min})$--ary tree, we can compare $ \Pws^{A}$ to a biased simple random walk on $\ZZ$. Let $\mathrm{P}_{\mathrm{SRW}(p)}$ be the law of a simple random walk on $\ZZ$ that moves up with probability $p$. Then the law of $(\vert S_n \vert, n\geq 1)$ restricted to $[\vert x \vert, \vert x \vert +2L)$ is $\mathrm{P}_{\mathrm{SRW}(p)}$ with $p = \frac{d_{\min}}{d_{\min}+\lambda}$. 
	 We imitate the $\mathrm{SRW}$--computations from \cite[Proof of (1.5), p. 249]{DGPZ-ld}. We can compare the biased $\mathrm{SRW}$ to the symmetric $\mathrm{SRW}$ by performing a change of measure. In particular we have for the large deviations of $T$ which is now the first time of the $\mathrm{SRW}$ to reach the boundary of $[\vert x \vert, \vert x \vert +2L]$ that 
	 \begin{equation*}
	 	\liminf_{n\to \infty}\frac{1}{n}\log \mathrm{P}_{\mathrm{SRW}(p)} \left( T> n \right) \geq 			\liminf_{n\to \infty}\frac{1}{n}\log \mathrm{P}_{\mathrm{SRW}(\frac{1}{2})} \left( T> n \right)-H\left( \frac{1}{2} \middle\vert p \right).
	 \end{equation*}
	 By  the large deviations of $T$ for $\mathrm{P}_{\mathrm{SRW}(\frac{1}{2})}$, see e.g.\ \cite[Page 244]{spitzer_RW}, we have
	 \begin{equation*}
	 	\liminf_{n\to \infty}\frac{1}{n}\log \mathrm{P}_{\mathrm{SRW}(\frac{1}{2})} \left( T> n \right) = - \frac{\pi^2}{8L^2}.
	 \end{equation*}
	  Thus 
	  \begin{equation*}
	 	\liminf_{n\to \infty}\frac{1}{n}\log \mathrm{P}_{\mathrm{SRW}(p)} \left( T> n \right) \geq 			- \frac{\pi^2}{8L^2} - H\left( \frac{1}{2} \middle\vert p \right) .
	 \end{equation*}
	 We return to the $\lambda$--biased random walk on $\omega$.
	 Let $c = \frac{1\wedge \lambda}{\lambda +d_{\min}}$ and $\varepsilon>0$, then for an even number $n$ with $\varepsilon n>L$, using the Markov property at time $\lceil (1-\varepsilon)n \rceil$ we have 
	 \[
	 \Pws^{A}(S_n =y_0  \mid S_0 = y_0)\\ 
	 \ge \Pws^{A}(T > \lceil (1-\varepsilon)n \rceil  \mid S_0 = y_0) c^{ \varepsilon n}.
	 \]
	 It follows that 
	 \begin{align*}
	 	\limsup_{n\to \infty} n^{-1} \log \Pws^{A}(S_n =y_0  \mid S_0 = y_0) 
	 	\ge (1-\varepsilon) \bigg(-\frac{\pi^2}{8 L^2} - H\left( \frac{1}{2} \middle\vert p \right) \bigg) + \varepsilon \log c.
	 \end{align*}
 As $\varepsilon$ is arbitrary, this yields $\rho_{\omega}(\lambda)  \ge \rho^A_{\omega}(\lambda) \ge \exp( -\frac{\pi^2}{8L^2} - H\left( \frac{1}{2} \middle\vert p \right) )$. 
 Letting $L\to \infty$, we conclude that $\rho_{\omega}(\lambda)  \ge e^{- H\left( \frac{1}{2} \middle\vert p \right)}$. 
 Recall that $p = \frac{d_{\min}}{d_{\min}+\lambda}$ and  $e^{- H\left( \frac{1}{2} \middle\vert p \right)}=  \frac{2\sqrt{\lambda d_{\min}}}{d_{\min}+\lambda}$. So we have the desired lower bound. 
	
	We next turn to the case $\lambda \ge d_{\min}$. Note that when $\lambda \ge m_{\mathtt{BGW}}$ the $\lambda$-biased random walk is recurrent - hence the spectral radius is $1$. It remains to study the case $m_{\mathtt{BGW}}> \lambda \ge d_{\min}$. The arguments are very similar as for the case $\lambda < d_{\min}$ but we consider a different subset of $\omega$, using ideas from \cite[Proof of Theorem~1.2]{DGPZ-ld}. 
	Recall that $(p_k,k\geq 1)$ is the probability distribution that determines $\mathtt{BGW}$. 
	There is $d_0 \in \mathbb{N}$ such that $p_{d_0} > 0$ with $d_0 \geq m_{\mathtt{BGW}} >\lambda$, fix such $d_0$. 
	Next, we again consider a vertex $x\in \omega$ where $A(x,2L)$ takes a specific form: on the first $L$ levels $A(x,2L)$ is a $d_0$--ary tree, whereas on the next $L$ levels $A(x,2L)$ is a $(d_{\min})$--ary tree. More precisely, for $y\succeq x$ we require $\kappa_y=d_0$ if $\vert y \vert <\vert x \vert + L$ and $\kappa_y=d_{\min}$ if $\vert x\vert +L \leq \vert y \vert < \vert x \vert +2L$. Such a vertex $x$ exists $\mathtt{BGW}$--almost surely, fix $x$ and abbreviate $A= A(x,2L)$.
	Starting the $\lambda$-biased random walk at any vertex $y_0\in A$ with $|y_0|= |x|+L$, we can again compare it to a random walk on $\ZZ$ killed upon leaving $[-L, L]$. This is an asymmetric random walk on $\ZZ$ with bias $\frac{d_0}{\lambda + d_0}>\frac{1}{2}$ on $\ZZ_-$ and bias $\frac{d_{\min}}{\lambda + d_{\min}} \leq \frac{1}{2}$ on $\ZZ_+$. As this random walk on $\ZZ$ is recurrent (when disregarding the killing), we deduce that  
	 $\lim_{L\to \infty} \rho^A_{\omega}(\lambda) = 1$. This also implies that $\rho_\omega(\lambda) =1$. 
\end{proof}

\begin{proof}[Proof of Theorem~\ref{thm:ls}]
	As $\lambda>0$, the $\lambda$-biased RW is irreducible. By \cite[Theorem~2.12]{Mueller2008}, a BRW starting from $x\in \omega$ is recurrent (strongly or weakly), if and only if $m >1/\rho_{\omega}(\lambda)$.
	See also \cite[Theorem~3.2]{GantertMueller_critical_branching}.
	Proposition~\ref{prop:sr} provides the value of $\rho_\omega(\lambda)$. It remains to show strong recurrence 
	when $m >1/\rho_{\omega}(\lambda)$. We only give details for the case $\lambda<d_{\min}$ as only a slight modification is needed for the case $m_{\mathtt{BGW}} > \lambda \ge d_{\min}$.

	Fix an integer $L>0$ (to be chosen later). As in the proof of Proposition~\ref{prop:sr}, let $x\in \omega$ be a $(d_{\min},2 L)$-nice vertex. 
	Let $\mathbf{P}^{A}_{\omega}$ denote the law of a BRW killed upon leaving  $A:= A (x, 2 L)$. 
	We have shown in the proof of Proposition~\ref{prop:sr} that $\lim_{L \to \infty} \rho^A_{\omega}(\lambda) = \rho_{\omega}(\lambda)$ and therefore for all $L$ large enough we have $m > 1/\rho^A_{\omega}(\lambda)$, fix such $L$. 
	By the definition of the spectral radius, for infinitely many $k$ the random walk $(S_n)$ satisfies $\Pws^A(S_k= x \mid S_0 = x) > m^{-k}$. Fix such a $k$ and
	start a BRW with law $\mathbf{P}^{A}_{\omega}$ at $x$. The expected number of particles after time $k$ at $x$ is bigger than $1$. Therefore, by considering the BRW at times $(k, 2k, 3k,\ldots)$ and only keeping the particles located at $x$, this yields a supercritical branching process, which has strictly positive survival probability $c>0$. Note that $c$ is the same for all $(d_{\min},2 L)$-nice vertices. 
	
	 Now let us consider a BRW starting with a particle at $\rt$. We look at the random walk $(S^*_n, n\ge 0)$ given by the ancestral lineage $(S^*_0 = X (\varnothing), S^*_1 = X(1), S^*_2 = X(11), \ldots)$. This is a $\lambda$--biased random walk.
	 Then the trace of this random walk a.s.\ encounters $(d_{\min},2L)$--nice vertices infinitely often. 
	 Indeed, let us explore the $\mathtt{BGW}$-tree as the random walk proceeds: let $(Z_i,i\geq 1)$ be the first vertices encountered by the random walk on level $ 2Li$. Note that the sets $(A(Z_i,2L),i\geq 1)$ are disjoint. For $i\in \NN$ let $\zeta_i =  \mathbbm{1}\{Z_i \text{ is a $(d_{\min},2L)$--nice vertex}\}$, the indicator that $Z_i$ is a $(d_{\min},2L)$--nice vertex. Consider now $(\zeta_i,i\geq 1)$, this sequence of random variables is \emph{i.i.d.} under $\PP$. And furthermore, $\PP(\zeta_1 = 1)>0$, i.e. there is a positive probability that $Z_1$ is a $(d_{\min},2L)$--nice vertex. This implies that $\PP$--almost surely, and hence also $\Pw$--almost surely, $\zeta_i=1$ infinitely often. 
	 
Having established that $(S_n^*, n\geq 1)$ encounters infinitely many $(d_{\min},2L)$--nice vertices, let $(Z_i^*,i\geq 1)$ be a subsequence of $(Z_i,i\geq1)$ such that for every $i\in \NN$ $Z_i^*$ is a $(d_{\min},2L)$--nice vertex. For each $i$, consider now a branching process with the measure $\mathbf{P}^{A}_{\omega}$ where $A= A(Z_i^*,2L)$ starting at $Z_i^*$ at the time when $(S_n^*,n\geq 1)$ first encounters $Z_i^*$. This auxiliary process is realised as a subset of particles of our BRW. As discussed above, this auxiliary branching process survives with probability $c>0$ under $\Pw$ for every $i$. Further, the auxiliary processes are independent. As survival of these processes are \emph{i.i.d.}\ trials, $\Pw$--almost surely, infinitely many of them survive. In particular, there exists $i$ such that the $i$--th auxiliary process survives. This means that $Z_i^*$ is visited infinitely often and hence the branching random walk is strongly recurrent. 
\end{proof}

\subsection{The upper bound}\label{sec:upper}
We prove the upper bound part for Theorem~\ref{thm:max}, that is, under the same assumptions as in Theorem~\ref{thm:max}, we have, for $\mathtt{BGW}$--almost every $\omega$, 
\begin{equation}\label{eq:conclusion_upper_bound}
	\limsup_{n \to \infty} \max_{|u|=n} \frac{\vert X(u) \vert }{n} \leq  v_{\lambda,m}, \qquad \Pw-a.s.
\end{equation}
Let us first mention a very useful tool that connects the BRW and the random walk of a single particle, the well-known \emph{many--to--one formula}. For fixed $\omega$ and any $f\geq0$ that is a function of a particle trajectory we have
\begin{equation}\label{eq:many_to_one}
	\mathbf{E}_{\omega}\left[\sum_{u\in \mathbb{T}\colon\vert u \vert = n} f\left((X(u_k), k\leq n) \right)\right] = m^n \mathrm{E}_{\omega} \left[ f\left((S_k, k\leq n) \right) \right], \qquad n\in \NN.
\end{equation}
In our case, \eqref{eq:many_to_one} follows from the linearity of expectation.
See also for example \cite[Theorem 1.1]{shi_branching_2015} for a proof for a more general model of branching random walks on $\RR$.
Analogous formulae hold for the annealed laws.

Using the many-to-one formula to translate the problem to the related RW, the upper bound follows almost immediately from the large deviation estimate in Theorem \ref{lem:ld}.
When $ v_{\lambda,m}=1$, the upper bound \eqref{eq:conclusion_upper_bound} holds automatically. So we may assume $ v_{\lambda,m}<1$ and let $a>v_{\lambda,m}$. By the many--to--one formula \eqref{eq:many_to_one} we have 
\begin{equation*}
	\Pw\left( \max_{|u|=n} \frac{\vert X(u) \vert }{n}\ge  a \right) \le  \Ew \bigg[\sum_{|u|=n} \ind{|X(u)|\ge a n} \bigg] =  m^n \Pws\left( |S_n|\ge a n \right).    
\end{equation*}
Since $a>v_{\lambda, m}$, we have $I_{\lambda}(a)>\log m$ and it follows from Theorem \ref{lem:ld} that, for $\mathtt{BGW}$-a.e.\ $\omega$, 
\[
\limsup_{n\to \infty} \frac{1}{n} \log \Pw\bigg( \max_{|u|=n} \frac{\vert X(u) \vert }{n}\ge  a \bigg) < 0.
\]
The Borel-Cantelli lemma implies that
\begin{equation*}
	\limsup_{n \to \infty} \max_{|u|=n} \frac{\vert X(u) \vert }{n} \leq  a, \qquad \Pw-a.s.,
\end{equation*}
for $\mathtt{BGW}$--almost every $\omega$. Because $a>v_{\lambda,m}$ was arbitrary, we deduce \eqref{eq:conclusion_upper_bound}.

\subsection{$\mathbf{0-1}$ laws, proof of Proposition \ref{prop:0-1-law-for-liminf}}\label{sec:0_1_laws}

The goal of this section is to show Proposition \ref{prop:0-1-law-for-liminf}. Interestingly, this requires different proofs for the strongly recurrent and transient regimes. We do this by showing various $0-1$ laws for the environment $\mathtt{BGW}$ and for the $\lambda$--biased random walk. Let us start by recalling the following classical zero-one law (c.f.~\cite[Proposition 5.6]{lyons_probability_2016}). We slightly vary notation to stress that this lemma can be applied both to the law of $\omega$ and to the law of $\TT$. 
 
\begin{lemma}\label{lem:inherited}[Zero-one law for inherited properties]\\
 Consider a Bienaymé--Galton--Watson reproduction law satisfying \eqref{eq:no_leaves_bgw} (no leaves) and denote the 
 corresponding branching process by $(Z_n)_{n\geq 0}$.
A property $A$ of trees, is called an inherited property if 
$A \subseteq \{\omega: \forall j\leq Z_1: \omega_j \in A\}$ where $\omega_j$ denotes the subtree rooted at $j$, for $j = 1, ...,Z_1$. This means that if a tree satisfies the property $A$ then all subtrees rooted at the children of the root also satisfy the property $A$. Then, if $A$ is an inherited property, we have $\mathtt{BGW}(A) \in \{0,1\}$.
\end{lemma}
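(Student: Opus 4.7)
The plan is to reduce the statement to a one-dimensional fixed-point analysis of the offspring generating function
$$f(s) \;:=\; \sum_{k\ge 0} p_k s^k, \qquad s\in[0,1].$$
Set $q := \mathtt{BGW}(A)$. The goal is to show $q\in\{0,1\}$.

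First I would apply the inheritance hypothesis $A\subseteq \{\omega : \omega_j\in A \text{ for all } j\le \kappa_{\rt}\}$ and pass to probabilities to obtain
$$q \;=\; \mathtt{BGW}(A) \;\le\; \mathtt{BGW}\bigl(\omega_j\in A \text{ for every } j\le \kappa_{\rt}\bigr).$$
Conditioning on $\kappa_{\rt}$ and invoking the defining branching property of $\mathtt{BGW}$ -- namely that conditional on $\kappa_{\rt}=k$ the fringe subtrees $\omega_1,\ldots,\omega_k$ are i.i.d.\ copies of $\omega$ -- the right-hand side evaluates to $\sum_{k\ge 0} p_k q^k = f(q)$. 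This yields the functional inequality $q \le f(q)$.

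It then remains to verify that under \eqref{eq:no_leaves_bgw} the only solutions of $q\le f(q)$ in $[0,1]$ are $q=0$ and $q=1$. Since $p_0=0$ we have $f(0)=0$, and since $p_1<1$ combined with $p_0=0$ forces $p_k>0$ for some $k\ge 2$, the generating function $f$ is strictly convex on $[0,1]$. Introducing $g(q):= f(q)-q$, one has $g(0)=g(1)=0$ with $g$ strictly convex, so $g(q)<0$ for $q\in(0,1)$, that is, $f(q)<q$ throughout the open interval. Hence the inequality $q\le f(q)$ is incompatible with $q\in(0,1)$, and we conclude $q\in\{0,1\}$.

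There is no real obstacle here; the argument is a two-line application of the branching property followed by strict convexity of $f$. The only subtlety worth flagging is that both conclusions $f(0)=0$ and strict convexity of $f$ are genuinely needed, and both are delivered by the standing hypothesis \eqref{eq:no_leaves_bgw} (from which $m_{\mathtt{BGW}}>1$ is in fact automatic).
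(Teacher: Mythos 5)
Your proof is correct and follows essentially the same route as the paper: pass the inheritance hypothesis through the branching property to get $q \le f(q)$ with $f$ the offspring generating function, then use $f(0)=0$, $f(1)=1$, and strict convexity (from $p_0=0$, $p_1<1$) to force $q\in\{0,1\}$. The paper phrases the generating function as $\varphi(s)=\EE_{\mathtt{BGW}}[s^{Z_1}]$ and your $g(q)=f(q)-q$ step is just a slightly more spelled-out version of the same convexity conclusion.
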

\begin{proof}
Let $\mathcal{A}$ (resp.\ $\mathcal{A}^{(j)}$) be the collection of trees $\omega$ such that  $\omega$ (resp.\ $\omega_j$) satisfies property $A$.
Conditional on $Z_1$, $(\mathcal{A}^{(j)},j\leq Z_1)$ are independent copies of $\mathcal{A}$. Therefore
\[
\alpha: = \mathtt{BGW}(\mathcal{A}) \leq \mathtt{BGW}\bigg(\bigcap_{j\leq Z_1} \mathcal{A}^{(j)} \;\bigg)= \EE_\mathtt{BGW}\bigg[ \prod_{j=1}^{Z_1}\mathtt{BGW}( \mathcal{A}^{(j)})\bigg]
= \EE_\mathtt{BGW}[\alpha^{Z_1}],
\]
hence, $\alpha\leq \varphi(\alpha)$ where $\varphi(s): = \EE_\mathtt{BGW}[s^{Z_1}]$. But, due to \eqref{eq:no_leaves_bgw}, the function $\varphi$ is strictly convex with $\varphi(0) =0$ and $\varphi(1) =1$. We conclude that $\alpha \in \{0,1\}$.
\end{proof}
We now turn to Proposition \ref{prop:0-1-law-for-liminf} for the strongly recurrent regime for $(\lambda, m)$ as in Theorem \ref{thm:ls}. Here we have a proper $0-1$ law. We formulate the statements for a fixed tree $\omega$, if the tree is chosen randomly according to $\mathtt{BGW}$ then the statement is true for $\mathtt{BGW}$--almost every $\omega$.

 \begin{proposition}\label{prop:rec}
 	Assume that $\omega$ is any tree such that the $\lambda$--biased BRW is strongly recurrent.  
 	Then for every $a>0$
	\begin{equation*}
		\Pw \Big(	\liminf_{n\to \infty} \max_{\vert u \vert = n}\frac{\vert X(u) \vert}{n}   \ge a\Big)\in \{0,1\}.
	\end{equation*}
 \end{proposition}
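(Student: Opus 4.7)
Set $B := \{\liminf_{n\to\infty}\max_{|u|=n}|X(u)|/n\ge a\}$ and, for each $v\in \omega$, let $\psi(v):= \Pw^{(v)}(B^c)$, with $\Pw^{(v)}$ denoting the law of the BRW started at $v$. The plan is to establish the functional inequality
\[
\psi(\rt) \;\le\; \phi_\mu\!\left(\psi(\rt)\right),
\qquad \phi_\mu(s):= \sum_{k\ge 0} \mu_k s^k,
\]
and then to conclude $\psi(\rt)\in\{0,1\}$ by the standard probability-generating-function argument used in the proof of Lemma~\ref{lem:inherited}.

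The first ingredient is to check that $B^c$ is inherited at the root. For $j\in\{1,\dots,\gamma_\varnothing\}$ let $\TT^{(j)}$ denote the subtree of $\TT$ rooted at the $j$th child of $\varnothing$, equipped with the shifted generation index $|\cdot|_{\TT^{(j)}}=|\cdot|_\TT -1$. If $B^c$ holds one may pick $\varepsilon>0$ and $n_k\uparrow\infty$ with $\max_{|u|=n_k}|X(u)|/n_k<a-\varepsilon$; since every $u\in\TT^{(j)}$ with $|u|_{\TT^{(j)}}=n_k-1$ has $|u|_\TT=n_k$, one gets
\[
\frac{1}{n_k-1}\max_{\substack{u\in\TT^{(j)}\\ |u|_{\TT^{(j)}}=n_k-1}}|X(u)|
\;\le\; \frac{n_k}{n_k-1}(a-\varepsilon)
\;\xrightarrow[k\to\infty]{}\; a-\varepsilon <a,
\]
so $B^c$ holds for the sub-BRW of every $\TT^{(j)}$. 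As the sub-BRWs $(\TT^{(j)},X|_{\TT^{(j)}})_{j}$ are $\Pw$-independent with conditional law $\Pw^{(X(j))}$, averaging gives
\[
\psi(\rt) \;\le\; \Ew\!\left[\prod_{j=1}^{\gamma_\varnothing}\psi(X(j))\right]
\;=\; \phi_\mu\!\left(\tfrac{1}{\kappa_\rt}\sum_{i=1}^{\kappa_\rt}\psi(i)\right).
\]

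The second ingredient, which I expect to be the main delicate step, is the monotonicity $\psi(i)\le \psi(\rt)$ for each child $i\in\{1,\dots,\kappa_\rt\}$ of $\rt$. For this I would argue as follows: by strong recurrence $\alpha_i=1$, so $\Pw^{(i)}$-a.s.\ the BRW from $i$ has infinitely many particles at $i$; each such particle independently produces a child at $\rt$ with probability $\lambda/(\lambda+\kappa_i)>0$, whence a conditional Borel--Cantelli argument yields, $\Pw^{(i)}$-a.s., a descendant particle $V$ with $X(V)=\rt$. Choose such a $V$ by a measurable rule. By the branching strong Markov property, the subtree BRW rooted at $V$ is distributed as $\Pw^{(\rt)}$, and the reindexing $n\mapsto n+|V|$ (with $n/(n+|V|)\to 1$) shows that whenever this sub-BRW satisfies $B$, so does the BRW from $i$. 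Consequently $\Pw^{(i)}(B)\ge \Pw^{(\rt)}(B)$, i.e.\ $\psi(i)\le \psi(\rt)$.

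Combining the two ingredients with the monotonicity of $\phi_\mu$ gives $\psi(\rt)\le \phi_\mu(\psi(\rt))$. Because $\mu_0=0$ and $\mu_1<1<m$, the map $s\mapsto \phi_\mu(s)/s=\mu_1+\mu_2 s+\cdots$ is strictly increasing on $(0,1]$, so $\phi_\mu(s)<s$ for every $s\in(0,1)$. Hence $\psi(\rt)\in\{0,1\}$, which is the stated zero-one law.
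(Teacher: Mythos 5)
Your proof is correct and takes essentially the same approach as the paper's: inheritance at the root to get $\psi(\rt)\le\Ew[\prod_j\psi(X(j))]$, comparison of $\psi$ at the children of $\rt$ to $\psi(\rt)$ via strong recurrence (the paper's Lemma~\ref{lem:rec}), and then the generating-function fixed-point argument. The only minor difference is that you observe a one-sided inequality $\psi(i)\le\psi(\rt)$ suffices, whereas the paper states and proves the full equality $q_{a,x}=q_{a,\rt}$ for every $x\in\omega$.
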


\begin{proof}[Proof of Proposition \ref{prop:0-1-law-for-liminf} for the strongly recurrent regime.] If $(\lambda, m)$ is chosen such that the $\lambda$--biased random walk is strongly recurrent for $\mathtt{BGW}$--almost every $\omega$ according to Theorem \ref{thm:ls}, then Proposition \ref{prop:0-1-law-for-liminf} follows immediately from Proposition \ref{prop:rec}.
\end{proof}

Before proving Proposition \ref{prop:rec}, we state one intermediate lemma. 

 \begin{lemma}\label{lem:rec}
 	Assume that $\omega$ is any tree such that the $\lambda$--biased BRW is strongly recurrent. 
 	Then for every $a>0$ and $x\in \omega$, 
 	set 
 	\[
 	q_{a,x}	:= \Pw \Big(	\liminf_{n\to \infty} \max_{\vert u \vert = n}\frac{\vert X(u) \vert}{n} \ge a \,\Big|\, X_{\varnothing} = x \Big).
 	\]
 	Then we have $q_{a,x} = q_{a,\rt}$. 
 \end{lemma}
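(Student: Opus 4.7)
The plan is to prove both inequalities $q_{a,\rt} \geq q_{a,x}$ and $q_{a,x} \geq q_{a,\rt}$ by embedding a BRW started from one of the vertices inside the other, exploiting the fact that in the strongly recurrent regime every vertex of $\omega$ is almost surely visited.

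The first (and main) technical input is the following claim: for any $y, z \in \omega$, starting a BRW at $y$ we have $\Pw(\exists u \in \TT: X(u) = z \mid X(\varnothing) = y) = 1$. To see this, note that the $\lambda$-biased random walk on $\omega$ is irreducible, so a fixed finite path from $y$ to $z$ is followed by a lineage with some strictly positive quenched probability $q = q(y,z) > 0$. Strong recurrence gives $\alpha_y = 1$, i.e.\ there almost surely exist infinitely many particles $u_1, u_2, \ldots \in \TT$ with $X(u_k) = y$. The subtrees rooted at the $u_k$'s are independent copies of the BRW started at $y$ (branching property), and in each of them the descendants follow the prescribed path to $z$ with probability at least $q$. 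The second Borel--Cantelli lemma then ensures that almost surely some particle hits $z$. This step is the subtle one: strong recurrence directly yields only \emph{return} to the starting vertex, so one must use irreducibility together with independent trials to transfer to visits of an arbitrary target.

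Applying the claim with $y = \rt$ and $z = x$, the random time $T^x := \inf\{n \geq 0: \exists\, u, |u|=n,\, X(u) = x\}$ is almost surely finite. Let $U^x$ denote the lexicographically smallest particle with $|U^x| = T^x$ and $X(U^x) = x$. The stopping-line branching property shows that, conditionally on $(T^x, U^x)$, the family $(X(u))_{u \succeq U^x}$, with generations shifted by $T^x$, has the same law as a BRW started from $x$. Writing $M_n := \max_{|u|=n}|X(u)|$ and $M_n^{U^x} := \max_{|u|=n,\, u \succeq U^x}|X(u)|$ for $n \geq T^x$, one has $M_n \geq M_n^{U^x}$; since $T^x < \infty$ almost surely, $(n - T^x)/n \to 1$, and hence
\[
\liminf_{n\to\infty}\frac{M_n^{U^x}}{n} = \liminf_{k\to\infty}\frac{M^{U^x}_{T^x + k}}{k},
\]
whose law (after integrating over $(T^x, U^x)$) coincides with that of $\liminf_n M_n/n$ under $\Pw(\cdot \mid X(\varnothing) = x)$. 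Therefore
\[
q_{a,\rt} \;=\; \Pw\!\left(\liminf_{n}\frac{M_n}{n}\geq a\right) \;\geq\; \Pw\!\left(\liminf_{n}\frac{M_n^{U^x}}{n}\geq a\right) \;=\; q_{a,x}.
\]

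The reverse inequality $q_{a,x} \geq q_{a,\rt}$ follows by the same argument with the roles of $\rt$ and $x$ swapped: the claim applied with $y = x$, $z = \rt$ produces an almost surely finite first hitting time of $\rt$ by a BRW started at $x$, and the embedding of the resulting sub-BRW from $\rt$ into the original BRW from $x$ yields the estimate. Combining both inequalities gives $q_{a,x} = q_{a,\rt}$, as desired.
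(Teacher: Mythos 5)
Your proof takes essentially the same route as the paper: locate a first hitting time of the target vertex along the BRW, then apply the branching property at that stopping line to embed a fresh BRW from the target inside the original one, giving $q_{a,\rt}\ge q_{a,x}$, and symmetrically the reverse inequality. You add a detail the paper leaves implicit, namely justifying that the BRW started at $x$ a.s.\ hits $\rt$: the paper attributes this directly to strong recurrence, but as you correctly observe $\alpha_y=1$ only controls returns to the starting vertex, and one needs irreducibility plus repeated independent trials to reach an arbitrary target. One small imprecision in that step: the particles $u_1,u_2,\ldots\in\TT$ with $X(u_k)=y$ need not be incomparable, and if one is an ancestor of another their descendant subtrees are nested rather than independent, so the second Borel--Cantelli lemma does not apply verbatim. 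This is easily repaired by extracting the trials sequentially -- take the first particle at $y$, restart the BRW from it, find the next first hit of $y$ inside that sub-BRW, and so on -- which by the strong Markov property yields genuinely independent excursions, each carrying the lineage from $y$ to $z$ with probability at least $q>0$. With that adjustment your argument is complete and agrees with the paper's.
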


 \begin{proof}
 	Set $T_{x,\rt} = \inf\{n\ge 0\colon \, \exists u \text{ with } |u|=n, X(u) =\rt\}$, which is a.s.\ finite thanks to the 	
	strong recurrence. Let $v$ be a particle such that $|v| = T_{x, \rt}$ and $X(v) = \rt$. 
 	Set $X^{(v)}(u) = X(vu), vu\in \TT$, i.e.\ the subpopulation descended from $v$. 
 	As $\max_{\vert u \vert=n}\vert X(u) \vert \geq \max_{|u| = n - T_{x,\rt}} X(vu)$, we have 
 	\[
 		\liminf_{n\to \infty} \max_{\vert u \vert=n} \frac{\vert X(u) \vert }{n}  \ge 	\liminf_{n\to \infty}  \max_{|u| = n - T_{x,\rt}}  \frac{\vert X^{(v)}(u) \vert }{n}
 	= 	\liminf_{n\to \infty} \max_{|u| = n}  \frac{\vert X^{(v)}(u) \vert }{n}.
 	\]
 	By the branching property, $X^{(v)}$ is a BRW started from $\rt$. Therefore, 
 	\[
 		q_{a,x} \ge \Pw \left(	\liminf_{n\to \infty}\ \max_{|u| = n}  \frac{\vert X(u) \vert }{n} \ge a \,\Big|\, X(\varnothing) = \rt \right)
 	= q_{a, \rt}. 
 	\]
 	A similar argument shows that $q_{a,\rt} \ge q_{a,x}$. 
 \end{proof}

\begin{proof}[Proof of Proposition~\ref{prop:rec}]
	Let $A_a = \{	\liminf_{n\to \infty} \max_{|u|=n} |X(u)|/n < a\}$ and \\
 	$A_{a}^{(j)} = \{	\liminf_{n\to \infty} \max_{|u|=n} |X^{(j)}(u)|/n < a\}$. We can see that $A_{a}$ is an inherited property, this means that
 	$A_{a}  \subseteq \bigcap_{j\le \gamma_{\varnothing}} A_{a}^{(j)}$. Moreover, the
	subpopulations $(X^{(j)}(u) := X(ju))$ descended from each particle $j \le \gamma_{\varnothing}$ are independent under 
	the law $\Pw$ given $(X(j), j\le \gamma_{\varnothing})$. Unfortunately we cannot apply Lemma \ref{lem:inherited}
	as the $A_a^{(j)}$ do not have the same law as they all still depend on the whole of $\omega$, not just their
	respective subtree. Nevertheless, using Lemma~\ref{lem:rec}, we have
	\begin{equation*}
		1- q_{a, \rt} = \Pw(A_a) \le \Ew\bigg[\Pw \bigg(\bigcap_{j\le \gamma_{\varnothing}} A_{a}^{(j)} \;\bigg|\; X(j), j\le \gamma_{\varnothing} \bigg) \bigg]
 	 	= \Ew\bigg[ \prod_{j=1}^{\gamma_{\varnothing}}\left( 1- q_{a,X(j)} \right)\bigg]
 	 	 = \Ew[\left(1- q_{a, \rt}\right)^{\gamma_{\varnothing}}]. 
	\end{equation*}
 	 This implies that $1-q_{a, \rt}$ is equal to either $0$ or $1$, and the same applies to  $q_{a, \rt}$.
\end{proof}

Very similar arguments allow us now to complete the proof of  \eqref{slower-in-rec-case}.
\begin{proof}[Proof of \eqref{slower-in-rec-case}]
	For any $a>0$, we see that $\{	\limsup_{n\to \infty} \min_{|u|=n} |X(u)|/n > a\}$ is an inherited property.  Since we are in the strongly recurrent case, analogous result to Lemma~\ref{lem:rec} holds. By similar arguments as in the proof of Proposition~\ref{prop:rec}, we deduce the zero--one law that, for $\mathtt{BGW}$--almost every $\omega$, 
	$\Pw\left(	\limsup_{n\to \infty} \min_{|u|=n} |X(u)|/n > a\right)\in \{0,1\}$. 
	
	Fix $k \in \NN$, $k$ even and fix $\omega$. Consider all particles which are at $\rt$ at times $0, k, 2k, 3k, \ldots$, interpreting the particles at $\rt$ at time $(\ell + 1)k$ as children of a particle at $\rt$ at time $\ell k$. They form a branching process with mean offspring $m^k \Pws(S_k= \rt \mid S_0= \rt)$.
	In the strongly recurrent case, $m > \frac{1}{\rho_\omega(\lambda)}$ and \eqref{spectral-radius-def} gives us that for $\mathtt{BGW}$--almost every $\omega$ there is some $k$ such that 
	$m^k \Pws(S_k= \rt \mid S_0= \rt) > 1$. Hence, this branching process has a positive survival probability and we conclude that for $\mathtt{BGW}$--almost every $\omega$,
	\begin{equation*}
		\Pw\left(	\limsup_{n\to\infty} \min_{u\in \TT, |u|=n} \frac{\vert X(u)\vert}{n} =0\right) > 0.
	\end{equation*}
Then the zero--one law implies that $\Pw\left(	\limsup_{n\to \infty} \min_{|u|=n} |X(u)|/n > a\right)=0$ for every $a>0$.  Hence,
\begin{equation}\label{slower-in-rec-case2}
\Pw\left(	\limsup_{n\to\infty} \min_{u\in \TT, |u|=n} \frac{\vert X(u)\vert}{n} =0\right) =1.
\end{equation}
This implies \eqref{slower-in-rec-case}. 
\end{proof}

We now turn to the proof of Proposition \ref{prop:0-1-law-for-liminf} in the transient regime. The lack of strong recurrence means it is a priori difficult to compare two branching random walks started from different vertices of (the same) $\omega$. Because of this the above methods do not apply. Instead we need to use that $\omega$ is \emph{statistically transitive}. 

To this end, we also introduce a killed version of the $\lambda$--biased random walk which is modified at the root, whose quenched law $\Pm^*_{\omega}$ is given by
\begin{align*}
	\Pm^*_{\omega}(S_{n+1} = i | S_{n} =\rt) 	&=\frac{1}{\lambda + \kappa_{\rt}},\quad i=1,\ldots, \kappa_{\rt},\\
	\Pm^*_{\omega}(S_{n+1} = \dagger | S_{n} =\rt) &=\frac{\lambda}{\lambda +\kappa_{\rt}},\\ 
	\Pm^*_{\omega}(S_{n+1} = x | S_{n} =y) 	&= \Pm_{\omega}(S_{n+1} = x | S_{n} =y) \hbox{ for } y\neq \rt\, ,
\end{align*}
where $\dagger$ is an additional cemetery point which is an absorbing state. 
Define the corresponding BRWs with quenched and annealed laws denoted by $\mathbf{P}^*_{\omega}$ and $\PP^*$ respectively (particles absorbed at $\dagger$ do not branch). 

\begin{proof}[Proof of Proposition \ref{prop:0-1-law-for-liminf} in the transient phase.] 
	We focus on the phase of $(m,\lambda)$ in Theorem \ref{thm:ls} where a $\lambda$--biased BRW is $\Pw$-almost surely transient for 
	$\mathtt{BGW}$--almost every $\omega$.  
	
	Consider a BRW $X^*$ of law $\mathbf{P}^*_{\omega}$. 
	We can couple this killed BRW $X^*$ with a BRW $X$ of law $\mathbf{P}_{\omega}$ by adding extra randomness. More precisely, for each particle $u$ of $X$ situated at the root with $X(u) = \rt$, we kill each of its children with probability $q = \frac{\lambda}{\lambda+ \kappa_{\rt}}$, independently of everything else. The resulting BRW has the same law as $X^*$ restricted to $\omega$ without $\dagger$. 
	 Let $N$ be the total number of particles of $X$ that are children of particles at the root $\rt$, that is $N = \#\{uj\in \mathbb{T}\colon X(u) =\rt, j\in \NN\}$. Then $N$ is $\mathbf{P}_{\omega}$-a.s.\ finite, due to the transience assumption. 	 
	 Set  
	 \begin{equation*}
	 		 B_a:= A_a^c= \Big\{ \liminf_{n \to \infty} \max_{\vert u \vert =n} \frac{\vert X(u) \vert}{n} \geq
	 	a\Big\}.
	 \end{equation*}
	 If $	\mathbf{P}_{\omega} \left(B_a \right)>0$, then we have  
		\begin{equation*}
		    		\mathbf{P}^*_{\omega}(B_a )
		    	\geq	\mathbf{P}^*_{\omega} \big(	B_a;\; \text{no particles are killed} \big)
				=
				\mathbf{E}_{\omega} \left[\ind{	B_a} (1-q)^{N}
				\right]  
				>0. 
			\end{equation*}
		
			 Now we view $\Upsilon(\omega) = \mathbf{P}_\omega \left( B_a \right)$ and $\Upsilon^*(\omega) = \mathbf{P}_{\omega}^* \left(B_a \right)$ as functions of the random variable $\omega$ under $\mathtt{BGW}$. 
	The previous considerations showed that $\Upsilon^*(\omega)>0$, $\mathtt{BGW}$--almost surely under the assumption that $\Upsilon(\omega)>0$, $\mathtt{BGW}$--almost surely.

			Consider now 
	\begin{equation}
    		\mathcal{W}_k = \left\{ u \in \TT: \vert X(u) \vert=k, \forall v \prec u: \vert X(v) \vert < k \right\},
	\end{equation}
	the set of particles that first hit level $k$ in their genealogical line of descent. Note that for $u,v \in
	 \mathcal{W}_n$ we might have $X(u) = X(v)$ which we would like to avoid. Therefore for each $x\in\{y\in
	 \omega: \exists u\in \mathcal{W}_k \ \text{with} \ X(u)=y\}$ pick a representative $u\in \mathcal{W}_k$ with
	 $X(u)=x$. Let $\Tilde{\mathcal{W}}_k \subseteq \mathcal{W}_k$ be the set of representatives. 
	This means that for $u,v\in \widetilde{\mathcal{W}}_n$ we have $X(u) \neq X(v)$.

	Fix $\omega$. We observe that for each $k$
	\begin{align*}
    		1-\Upsilon(\omega) = \mathbf{P}_\omega \left( \liminf_{n \to \infty} \max_{\vert u \vert =n} \frac{\vert
		 X(u) \vert}{n} <a\right) 
    		\leq \mathbf{P}_\omega \left( \forall u \in \widetilde{\mathcal{W}}_k: \liminf_{n \to \infty}
		 \max_{\substack{\vert v \vert =n \\ u \prec v}} \frac{\vert X(v) \vert}{n} < a\right).
	\end{align*}
	Next, observe that, conditional on positions $(X(u), u\in \widetilde{\mathcal{W}}_k)$, the descendant
 	populations of distinct particles $u\in \widetilde{\mathcal{W}}_k$ evolve independently. This means that by applying the Markov property we obtain
	\begin{equation*}
    		\mathbf{P}_\omega \left( \forall u \in \widetilde{\mathcal{W}}_k: \liminf_{n \to \infty} 
		\max_{\substack{\vert v \vert =n \\ u \prec v}} \frac{\vert X(v) \vert}{n} < a\right) = \mathbf{E}_{\omega}
		 \left[ \prod_{u \in \widetilde{\mathcal{W}}_k} \mathbf{P}_{\omega,{X(u)}} \left( \liminf_{n \to \infty}
		 \max_{\vert v \vert =n } \frac{\vert X(v) \vert}{n} < a\right)\right],
	\end{equation*}
	where $\mathbf{P}_{\omega,x}$ corresponds to the process started from a single particle located at $x$.
	For $x\in \omega$, we let $F_\omega(x)$ be the fringe subtree rooted at $x$, that is
	$F_\omega(x) = \left\{y \in \omega: y\succeq x \right\}.$ We introduce killing again, this time we modify 
	$ \mathbf{P}_{\omega,{X(u)}} $ by killing all particles that cross the edge $(X(u),X(u)_-)$, the edge from
	$X(u)$ to its parent. Denote the corresponding probability measure by $ \mathbf{P}_{\omega,{X(u)}}^*$.
	As before, the process with killing can be realised as a subset of particles of the process without killing, and thus the maximal displacement, $\max_{\vert v \vert=n} \vert X(v) \vert$ in the
	process without killing stochastically dominates the maximal displacement in the process with killing.
	Using the convention $\max \emptyset = -\infty$ we obtain
	\begin{align*}
    		\mathbf{P}_{\omega,{X(u)}} \left( \liminf_{n \to \infty} \max_{\vert v \vert =n } \frac{\vert X(v) \vert}{n} 
		< a\right) &\leq \mathbf{P}_{\omega,X(u)}^* \left( \liminf_{n \to \infty} \max_{\vert v \vert =n } 
		\frac{\vert X(v) \vert}{n} < a\right) \\
		&=  \mathbf{P}_{F_\omega(X(u))}^* \left( \liminf_{n \to \infty} \max_{\vert v \vert =n } 
		\frac{\vert X(v) \vert}{n} < a\right) = 1-\Upsilon^*(F_\omega(X(u))).
	\end{align*}
	This yields the inequality
	\begin{equation*}
    		1- \Upsilon(\omega) \leq \mathbf{E}_{\omega} \left[ \prod_{u \in \widetilde{\mathcal{W}}_k} 
		\bigg(1-\Upsilon^*\big(F_\omega(X(u))\big) \bigg)\right].
	\end{equation*}
	Going to the annealed law, i.e.\ integrating over $\omega$, and rearranging this means
	\begin{equation*}
    		\EE_{\mathtt{BGW}}\left[ \Upsilon(\omega) \right] \geq 1 - \EE 
		\left[ \prod_{u \in \widetilde{\mathcal{W}}_k} \bigg(1-\Upsilon^*\big(F_\omega(X(u))\big) \bigg) \right].
	\end{equation*}
	
	Note that conditional on $ \widetilde{\mathcal{W}}_k$ and $(X(u), u\in \widetilde{\mathcal{W}}_k)$ ,  the $(F_\omega(x), \vert x \vert =k )$ are \emph{i.i.d.}\ 
	$\mathtt{BGW}$--trees under $\EE$. This means that we can improve the inequality to 
	\begin{equation*}
    		\EE_{\mathtt{BGW}}\left[ \Upsilon(\omega) \right] \geq 1 - \EE
		\left[ \prod_{u \in \widetilde{\mathcal{W}}_k} \bigg(1-\Upsilon^*\big(F_\omega(X(u))\big) \bigg) \right] = 1 - \EE
		\left[ \EE_{\mathtt{BGW}} \left[ 1-\Upsilon^*(\omega) \right]^{\vert \widetilde{\mathcal{W}}_k \vert }  \right] .
	\end{equation*}
	Lastly, we claim that as $k\to \infty$ we have $\vert \widetilde{\mathcal{W}}_k \vert \to \infty $, we show this in Lemma \ref{lemma:spread} below. 
From this and $\Upsilon^*(\omega)>0$, $\mathtt{BGW}$--almost surely
we obtain that
	$\EE_{\mathtt{BGW}}\left[ \Upsilon(\omega)\right] \geq 1.$
	This implies that $ \Upsilon(\omega) =1$ for $\mathtt{BGW}$--almost every $\omega$ because 
	$\Upsilon(\omega)\leq 1$ as it is a probability.  
	\end{proof}
	
	\begin{lemma}\label{lemma:spread}
	In the setting of the proof of Proposition \ref{prop:0-1-law-for-liminf} in the transient regime, we have that $\vert \widetilde{\mathcal{W}}_k \vert \to \infty $, $\PP$--almost surely as $k\to \infty$.
	\end{lemma}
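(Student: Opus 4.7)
The plan is to show $|\widetilde{\mathcal{W}}_{jL}|\to\infty$ along a subsequence $(jL)_{j\ge 0}$ by embedding a supercritical Galton--Watson-like process into its evolution, and then upgrade positive-probability divergence to $\PP$-almost sure divergence via independent restarts in disjoint subtrees. The key structural observation is that for distinct $u,u'\in \widetilde{\mathcal{W}}_k$ (which have $X(u)\neq X(u')$), the subtrees $F_\omega(X(u))$ and $F_\omega(X(u'))$ of $\omega$ are disjoint, so level-$(k+L)$ first-hit vertices produced by descendants of $u$ (respectively $u'$) whose ancestral lines stay inside $F_\omega(X(u))$ (respectively $F_\omega(X(u'))$) are automatically distinct across $u$'s.

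First, $|\widetilde{\mathcal{W}}_k|\ge 1$ for every $k$, $\PP$-a.s. Indeed, $\mu_0=0$ forces $\TT$ to contain infinite rays, and each ray's $X$-trajectory is a $\lambda$-biased random walk which, by Theorem~\ref{thm:speedRW} (using $\lambda<d_{\min}\le m_{\mathtt{BGW}}$), has positive speed $v_\lambda$ and so visits every level a.s. Next, I establish the following one-step spreading estimate: there exist $L\in\NN$ and constants $p,\delta\in(0,1)$ with $p+\delta>1$ such that for $\mathtt{BGW}$-a.e.\ sub-environment rooted at $x$, the number $R(L,x)$ of distinct level-$(|x|+L)$ first-hit vertices in $F_\omega(x)$ produced by $F_\omega(x)$-staying descendants of a single initial particle at $x$ satisfies $\mathbf{P}(R(L,x)\ge 1)\ge p$ and $\mathbf{P}(R(L,x)\ge 2)\ge \delta$. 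The ingredients are: $\gamma_\varnothing\ge 2$ with positive probability (as $\mu_1<1$); two children of the initial particle going to distinct children $x_1, x_2$ of $x$ with positive conditional probability (since $\kappa_x\ge d_{\min}\ge 2$); and each sub-BRW in $F_\omega(x_i)$ producing a descendant at level $|x|+L$ in $F_\omega(x_i)$ with uniform positive probability, which follows from the positive speed $v_\lambda$ and a many-to-one plus Paley--Zygmund argument. Disjointness $F_\omega(x_1)\cap F_\omega(x_2)=\emptyset$ then yields $R(L,x)\ge 2$ on this event.

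Conditional on $\widetilde{\mathcal{W}}_{jL}$ and $\omega$, the variables $\{R(L,X(u))\colon u\in\widetilde{\mathcal{W}}_{jL}\}$ are independent under $\mathbf{P}_\omega$, and by disjointness of their contributions $|\widetilde{\mathcal{W}}_{(j+1)L}|\ge \sum_{u\in\widetilde{\mathcal{W}}_{jL}}R(L,X(u))$. This dominates a Galton--Watson process with offspring mean $\ge p+\delta>1$, whose survival probability is some $s>0$; on survival, $|\widetilde{\mathcal{W}}_{jL}|\to\infty$. On extinction, by Step~1 we still have $|\widetilde{\mathcal{W}}_{j'L}|\ge 1$ for all later $j'$, so we may restart the same procedure from a fresh $u\in\widetilde{\mathcal{W}}_{j'L}$ in a subtree disjoint from the earlier attempt; under $\PP$ these restarts correspond to independent $\mathtt{BGW}$ sub-environments and hence are mutually independent. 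A Borel--Cantelli argument shows that at least one restart must eventually succeed $\PP$-a.s., giving $|\widetilde{\mathcal{W}}_{jL}|\to\infty$ $\PP$-a.s. Intermediate levels $k\in(jL,(j+1)L)$ are handled by applying the same argument with a smaller value of $L$.

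The main obstacle will be the uniform-in-sub-environment aspect of the spreading estimate: producing constants $p,\delta>0$ that do not depend on the realization of $F_\omega(x)$. This uses the stationarity of $\mathtt{BGW}$ under fringe-root shifts, together with the spectral-radius bound of Proposition~\ref{prop:sr} and the large-deviation estimates of Section~\ref{subsection:lambda_biased_rw}, to ensure the hitting probabilities of the killed $\lambda$-biased walk are controlled uniformly across typical sub-environments.
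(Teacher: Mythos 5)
Your strategy is genuinely different from the paper's. The paper follows a single tagged lineage (a $\lambda$-biased random walk), identifies vertices $x$ on its trajectory whose fringe subtree the walk never explores, and defines a $\mathtt{split}(x)$ event where a sibling of the tagged particle escapes into $F_\omega(x)$ and never revisits $x$. Conditionally on the past, each split has a uniform positive annealed probability, so a conditional Borel--Cantelli argument gives infinitely many splits; each one contributes one distinct first-hit vertex, so $|\widetilde{\mathcal{W}}_k|$ diverges by an \emph{additive} lower bound. You instead propose a \emph{multiplicative} argument (Galton--Watson domination plus restarts). In principle this can work, but as stated it has a concrete gap.

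The ``one-step spreading estimate'' asserting $\mathbf{P}(R(L,x)\ge 1)\ge p$, $\mathbf{P}(R(L,x)\ge 2)\ge \delta$ with $p+\delta>1$ uniformly over $\mathtt{BGW}$-a.e.\ sub-environment rooted at $x$ is not justified and is in fact false. The probability that the killed BRW (killed upon leaving $F_\omega(x)$) ever reaches level $|x|+L$ is bounded above by its quenched survival probability $\sigma(\omega)$, and as $L\to\infty$ one has $p(\omega)\downarrow\sigma(\omega)$ and $\delta(\omega)\uparrow\sigma(\omega)$, so $p+\delta\le 2\sigma(\omega)$. Near the critical curve $m=\frac{d_{\min}+\lambda}{2\sqrt{\lambda d_{\min}}}$ the killed BRW is barely supercritical and $\sigma(\omega)$ is small (indeed $\sigma(\omega)$ is not bounded away from $0$ under $\mathtt{BGW}$), so no pair of uniform constants with $p+\delta>1$ exists. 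The building blocks you cite (positive probability of branching, of siblings going to distinct children of $x$, of reaching level $L$) give only $p,\delta>0$, which yields a mean lower bound $p+\delta$ but not $p+\delta>1$; and hence not supercriticality. What would work is to bound the \emph{annealed} offspring mean $\EE\bigl[\mathbf{E}_\omega[R(L,\rt)]\bigr]>1$ for $L$ large by arguing (via many-to-one and a second-moment/Paley--Zygmund step) that on survival of the killed BRW the number of distinct first-hit vertices at level $L$ diverges in probability; this is substantially more work than the paper's Borel--Cantelli argument, which only needs a single uniform positive constant for the conditional split probability and sidesteps supercriticality entirely. A secondary issue: ``applying the same argument with a smaller value of $L$'' for intermediate levels does not go through, since the GW embedding requires $L$ above a supercriticality threshold; one should instead run the argument with the same large $L$ along each residue class modulo $L$.
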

	
	\begin{proof}
	
	We consider a tagged particle like in the proof of Theorem~\ref{thm:ls}: We look at the random walk $(S^*_n, n\ge 0)$ given by the ancestral lineage $(S^*_0 = X (\varnothing), S^*_1 = X(1), S^*_2 = X(11), \ldots)$. Let $1_n$ denote the tagged particle in the $n$-th generation so that $S_n^*=X(1_n)$. This is a $\lambda$--biased random walk. Inspect the trace of $S^*$, $(S^*_n, n\geq 0)$. Consider the set 
	\begin{equation*}
		R = \big \{x \in \omega: \exists n \in \mathbb{N}: S^*_n=x, \forall \ell \in \mathbb{N}: S^*_\ell \notin F_{\omega}(x)\backslash \{x\} \big\}.
	\end{equation*}
	That is, $x\in R$ if $S^*$ encounters $x$ but at the same time does not explore the subtree associated with $x$. An event which could cause $x \in R$ is the following: $S^*$ encounters $x_-$, the parent of $x$, moves to $x$, moves back to $x_-$, moves to a different child of $x_-$, and then never visits $x_-$ again. By transience of $S^*$ we can see that $\vert R \vert = \infty$, $\PP$--almost surely. 
	Indeed, for each vertex $x \in \{S^*_i, i \ge 0\}$, consider the last time $n$ such that $S^*_n = x$, which is almost surely finite by transience. After time $n$, one of two mutually exclusive situations occurs:
	\begin{enumerate}
		\item  The walk $\{S^*_i, i \ge n+1\}$ never returns to the fringe tree $F_\omega(x)$ rooted at $x$. 
		\item The walk never leaves $F_\omega(x)$. 
	\end{enumerate}
	In the first case, there must be a vertex $y$ on $F_\omega(x)$ that belongs to $R$.
	Now suppose, for a contradiction, that $|R| < \infty$. Then after some finite time $N$, the walk must always fall into the second situation. Consequently, from time $N$ onward the walk $\{S^*_i, i \ge N\}$ would be confined to a single ray of the tree $\omega$, an event that occurs with probability zero. So we have $\PP(|R| = \infty)=1$.

	For $x\in \omega$, we define an event $\mathtt{split}(x)$ for the branching random walk: 
	\begin{enumerate}
		\item We have $x\in R$, let $\tau_x$ be the first time $S^*$ hits $x$. 
		\item The particle $1_{\tau_x}$ (which corresponds to $S^*_{\tau_x}$) has at least $2$ children.
		\item The particle $v=1_{\tau_x}2$ (this is the second offspring particle of $1_{\tau_x}$ which does not correspond to $S^*_{\tau_x+1}$) moves to a child of $x$. 
		\item The lineage corresponding to $v$ (that is particles of the form $v1_{\ell}$ for $\ell \geq 0$) never revisits $x$. 
	\end{enumerate}
	See Figure \ref{fig:split} for an illustration of this event. 
	We use these events for a lower bound on $\liminf_{k \to \infty}  \vert \widetilde{\mathcal{W}}_k \vert$,
	\begin{equation}\label{eq:lower_bound_W}
		\liminf_{k \to \infty} \vert \widetilde{\mathcal{W}}_k \vert \geq  \sum_{x\in R }\ind{\mathtt{split}(x)},
	\end{equation}
	that is, whenever $\mathtt{split}(x)$ happens, $\liminf_{k \to \infty}  \vert \widetilde{\mathcal{W}}_k \vert$ increases by $+1$.
	To estimate the probabilities of $\mathtt{split}(x)$, we define a sequence of $\sigma$--algebras $(\mathcal{G}_n)_{n \geq 1}$. To this end, conditional on $\left(S_\ell^*\right)_{\ell\geq 0}$ we enumerate $R$ so that we can write $R = (x_1, x_2, \ldots)$, $\mathbb{P}$--almost surely. We set
	\begin{equation*}
	\mathcal{G}_n = \sigma\left\{\left(S_\ell^*\right)_{\ell\geq 0} ; (\kappa_{S_\ell^*})_{\ell \geq 0}; \mathtt{split}(x_i) \text{ for }1\leq i<n\right\}.
	\end{equation*}
	
	\begin{figure}[tbh]
		\begin{center}
			\includegraphics[width=0.4\textwidth]{./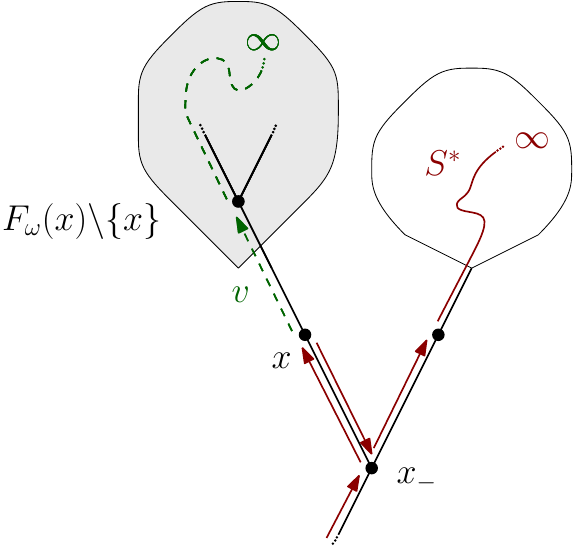}
			\caption{An illustration of the event $\mathtt{split}(x)$: the tagged particle $S^*$ (in solid red) visits $x$ but not $F_\omega(x)\backslash \{x\}$ (in light grey). While $S^*$ visits $x$, the BRW produces a new particle $v$ (in dashed green) that moves to a child of $x$ and then never revisits $x$.}
			\label{fig:split}
		\end{center}
	\end{figure}
	
	  Because by our choice of $x_n$ the random walk $S^*$ never explores  $F_{\omega}(x_n)\backslash \{x_n\}$, we observe that, conditionally given $\mathcal{G}_n$, the family $(F_{\omega}(x_n i ), 1\le i\le \kappa_{x_n})$ are i.i.d.\  $\mathtt{BGW}$-trees. Further, for $i<n$, $\mathtt{split}(x_i)$ does not depend on $F_{\omega}(x_n)\backslash \{x_n\}$. Therefore we have
	\begin{align*}
		&\mathbb{P}\left(\mathtt{split}(x_n) \middle \vert \mathcal{G}_n\right)\\
		&\qquad = (1-\mu_1)\frac{\kappa_{x_n}}{\lambda+\kappa_{x_n}}\mathbb{P}\left(\text{$\lambda$--biased RW $(S_k)$ never visits $x_n$ starting from a uniformly chosen child of $x_n$}\middle\vert \mathcal{G}_n \right)\\
		&\qquad \geq (1-\mu_1)\frac{\kappa_{x_n}}{\lambda+\kappa_{x_n}}\mathbb{P}^*\left(\tau_{\dagger} = \infty \right),
	\end{align*}
	where in the second step we used that, under the conditional probability, the particle $v$ sees a $\mathtt{BGW}$--distributed tree; switching from $\mathbb{P}$ to $\mathbb{P}^*$  (modified at the root) then gives the probability as $\mathbb{P}^*(\tau_\dagger = \infty)$ where $\tau_\dagger = \inf\{k\ge 0\colon S_k =\dagger\}$ is the hitting time to $\dagger$. 
	
	Lastly, we estimate $\frac{\kappa_{x_n}}{\lambda + \kappa_{x_n}} \geq \frac{1}{\lambda + 1}$ and therefore for all $n\geq 1$ we have
	\begin{equation*}
		\mathbb{P}\left(\mathtt{split}(x_n) \middle \vert \mathcal{G}_n\right) \geq (1-\mu_1)\frac{1}{\lambda +1} \mathbb{P}^*\left(\tau_\dagger = \infty \right)>0,
	\end{equation*}
	because we are in the regime where the $\lambda$--biased random walk is transient. 
	This estimate is uniform in $n$, and does not depend on $\mathcal{G}_n$. Hence
	\begin{equation*}
		\sum_{n=1}^\infty \mathbb{P}\left(\mathtt{split}(x_n) \middle \vert \mathcal{G}_n\right) \geq \sum_{n=1}^\infty (1-\mu_1)\frac{1}{\lambda +1} \mathbb{P}^*\left(\tau_\dagger = \infty \right) = \infty.
	\end{equation*} 
	A conditional version of the Borel--Cantelli Lemma (see for example \cite[Cor. 9.21]{kallenberg_foundations_2021}) tells us that therefore also $\sum_{n=1}^\infty \ind{\mathtt{split}(x_n)} = \infty$, $\mathbb{P}$--almost surely. Together with \eqref{eq:lower_bound_W}, we obtain that $\vert \widetilde{\mathcal{W}}_k \vert \to \infty$ as $k \to \infty$, $\mathbb{P}$--almost surely. 
	\end{proof}

\subsection{Annealed probability of fast particles, proof of Proposition \ref{prop:annealed_survival_proba}}\label{sec:annealed_survival_proba}

The goal of this section is to show Proposition \ref{prop:annealed_survival_proba}. That is we want to show that, for any $a<v_{\lambda,m}$,
\begin{align*}
		&\PP \left( \liminf_{n\to\infty} \max_{\vert u \vert = n} \frac{\vert X(u) \vert}{n} \ge  a\right) > 0.
\end{align*} 


The crucial idea is to look for trajectories that move from level $D_{(i-1)n}$ to level $D_{in}$ without revisiting level $D_{(i-1)n}$ and that move quickly, that is in time less than $n/a$. This allows us to use an approximation with an $\NN_0$--valued branching process in random environment. If we can show that this branching process survives with positive probability we are done.

We start by introducing some notation. For any $u\in \TT$ and $n\in \NN$ let 
\begin{equation*}
	T^u_n:=\inf \{k\le |u|: ~ |X(u_k)|  = n\} \quad \text{and} \quad \mathcal{L}_n:= \{ u_{T^u_n}, u\in\TT \},
\end{equation*} 
that are the first hitting time of $D_n=\{x\in \omega: \vert x \vert=n\}$ along the ancestral lineage of $u$ and the collection of particles that hit $D_n$ for the first time in their lines of descent respectively. 

Specifically, we fix 
$n>0$. Set $\mathcal{Z}^{(n)}_{0}= \{\varnothing\}$ and 
\begin{equation}\label{eq:Z1}
\mathcal{Z}^{(n)}_{1} = \{u\in  \mathcal{L}_n \colon |u| \le a^{-1} n,X(u_k)\ne \rt, \forall k= 1,\ldots, |u|   \} . 
\end{equation}
For $i\in \NN$, we define recursively a family of particles 
\begin{equation*}
	\mathcal{Z}^{(n)}_{i+1} = \left\{u\in \mathcal{L}_{(i+1)n}: u_{T^u_{i n}}\in \mathcal{Z}^{(n)}_i, |u|- T^u_{i n}\le a^{-1} n,  \vert X(u_k) \vert > in, \, T^u_{in} < k \leq \vert u \vert \right\}.
\end{equation*}
These are the particles that, for $j \leq i$, move from level $D_{jn}$ to $D_{(j+1)n}$ quickly without revisiting $D_{jn}$. 
We claim that particles like these exist for all $i$ with positive probability. 

\begin{lemma}\label{lem:anneal0}
	There exists $n_0>0$, such that for every $n\geq n_0$, 
	\begin{equation*}
	\PP(\#\mathcal{Z}^{(n)}_{i}>0 ~\text{for all}~ i\ge 1)> 0.  
	\end{equation*}
\end{lemma}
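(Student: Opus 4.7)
My plan is to stochastically dominate $(\#\mathcal{Z}^{(n)}_i)_{i\ge 0}$ from below by a Galton--Watson process whose offspring mean is exponentially large in $n$, so that survival with positive probability follows for $n$ large. Statistical transitivity of the $\mathtt{BGW}$ tree is used to decouple offspring contributions from different ancestral positions.

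First, I compute the quenched first moment. By the many--to--one formula,
\[
	\Ew\bigl[\#\mathcal{Z}^{(n)}_1\bigr] \;=\; \sum_{k=\lceil n/(a+\varepsilon)\rceil+1}^{\lfloor n/a\rfloor} m^k\,\Pws\bigl(|S_k|=n,\ |S_j|<n\ \forall j<k,\ S_j\neq\rt\ \forall\, 1\le j\le k\bigr).
\]
I choose $\varepsilon>0$ small enough that $a+\varepsilon<v_{\lambda,m}$, i.e.\ $I_\lambda(a+\varepsilon)<\log m$. Theorem~\ref{lem:ld} together with standard arguments to absorb the ``first hitting'' and ``no return to $\rt$'' constraints (these cost only subexponential factors, since the transient biased walk drifts away from $\rt$ at positive speed) then give, for $\mathtt{BGW}$-a.e.\ $\omega$, that $\Ew[\#\mathcal{Z}^{(n)}_1]\ge e^{\delta n+o(n)}$ for some $\delta=\delta(a,\varepsilon)>0$. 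The same reasoning applied inside the fringe subtree $F_\omega(X(u))$ of any $u\in\mathcal{Z}^{(n)}_i$ yields the analogous quenched lower bound for its expected contribution to $\#\mathcal{Z}^{(n)}_{i+1}$.

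Next I set up the Galton--Watson lower bound. Let $V_i:=\{X(u):u\in\mathcal{Z}^{(n)}_i\}$, and for each $x\in V_i$ pick a deterministic representative $u_x\in\mathcal{Z}^{(n)}_i$ with $X(u_x)=x$. Conditional on the tree up to level $in$ and on $V_i$, the subtrees $(F_\omega(x))_{x\in V_i}$ are disjoint and, by statistical transitivity, are i.i.d.\ $\mathtt{BGW}$-trees. Letting $\hat\xi_x$ denote the number of distinct positions in $D_{(i+1)n}$ reached by $\mathcal{Z}^{(n)}_{i+1}$-descendants of $u_x$, the variables $(\hat\xi_x)_{x\in V_i}$ are therefore i.i.d.\ under $\PP$, with common annealed distribution $\eta$. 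Writing $\hat W_i:=|V_i|$, we obtain $\hat W_{i+1}\ge\sum_{x\in V_i}\hat\xi_x$, so $(\hat W_i)$ stochastically dominates a Galton--Watson process $(Y_i)$ with offspring law $\eta$; since $\{\#\mathcal{Z}^{(n)}_i>0\ \forall i\}\supseteq\{Y_i>0\ \forall i\}$, it is enough to show $(Y_i)$ survives with positive probability.

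To conclude, it suffices to show $\EE[\eta]>1$ (in fact $\EE[\eta]\to\infty$) for $n$ sufficiently large. I would prove this via a Paley--Zygmund argument: a many--to--two computation in the regime $a<v_{\lambda,m}$ (the ``$L^2$-regime'' for the associated BRW) gives $\Ew[(\#\mathcal{Z}^{(n)}_1)^2]\le C\bigl(\Ew[\#\mathcal{Z}^{(n)}_1]\bigr)^2$, and hence with positive $\Pw$-probability $\#\mathcal{Z}^{(n)}_1$ is of order $e^{\delta n}$. \emph{The main obstacle} is to translate this abundance of particles into many \emph{distinct} visited positions, i.e.\ to prevent pathological concentration of $\mathcal{Z}^{(n)}_1$ on a small subset of $D_n$; this is controlled by a complementary many--to--two estimate on $\Ew\bigl[\sum_{x\in D_n}X_x(X_x-1)\bigr]$, where $X_x$ counts particles of $\mathcal{Z}^{(n)}_1$ located at $x$. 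In the $L^2$ regime this ``collision count'' is also of order $\bigl(\Ew[\#\mathcal{Z}^{(n)}_1]\bigr)^2$, so the typical multiplicity of a visited site is $O(1)$, forcing $\EE[\eta]$ to grow with $n$ at the same exponential rate $\delta$.
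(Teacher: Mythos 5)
Your GW--domination-via-distinct-positions idea is a genuinely different decoupling from the paper's. The paper instead compares $(\#\mathcal{Z}^{(n)}_i)$ to a branching process in random environment $(Z'_i)$ in which \emph{all} generation-$i$ particles are forced to use the \emph{same} fresh $\mathtt{BGW}$ environment, and then uses a Jensen/Chebyshev-sum inequality ($\theta_{i+j,\ell}\geq\theta_{i,\ell}\theta_{j,\ell}$) on the annealed extinction probabilities to show that this coupling only increases the probability of dying out. Supercriticality of the BPRE only needs $\EE_{\mathtt{BGW}}[\log\Ew[\#\mathcal{Z}^{(n)}_1]]>0$, a first-moment statement (Lemma~\ref{lem:log}). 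Your route goes the opposite way --- decouple by picking one representative per distinct site, so the offspring become i.i.d.\ and one gets a genuine Galton--Watson lower bound --- and that setup is correct: the fringe trees over distinct $x\in V_i$ are disjoint and i.i.d.\ $\mathtt{BGW}$ under $\PP$ given $V_i$, and the $\mathcal{Z}^{(n)}_{i+1}$-condition confines each lineage to the strip $(in,(i+1)n]$ so there is no leakage between cones.

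The gap is in the last step, where you need $\EE[\eta]>1$. You assert that $a<v_{\lambda,m}$ is the ``$L^2$ regime'' and that a many--to--two bound yields $\Ew[(\#\mathcal{Z}^{(n)}_1)^2]\leq C(\Ew[\#\mathcal{Z}^{(n)}_1])^2$. That is not true for all $a<v_{\lambda,m}$. The standard MRCA decomposition of the second moment shows that pairs that split \emph{late} on a shared atypical trajectory give a contribution which, as $a\uparrow v_{\lambda,m}$, grows at a strictly larger exponential rate than $(\Ew[\#\mathcal{Z}^{(n)}_1])^2$; the $L^2$ window is a strict sub-interval of $(v_\lambda,v_{\lambda,m})$. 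Consequently Paley--Zygmund only yields $\PP(\#\mathcal{Z}^{(n)}_1>0)\gtrsim e^{-cn}$ for some $c>0$ rather than a constant, and your GW with offspring law $\eta$ is not shown to be supercritical. Replacing $a$ by a larger $\tilde a<v_{\lambda,m}$ does not help, since $\tilde a$ is even closer to the front. The ``collision count'' estimate you mention has exactly the same obstruction: the dominant collisions occur via late common ancestry, precisely the term that breaks the $L^2$ bound. The paper's BPRE route sidesteps all of this --- it never needs a second moment, only $\EE_{\mathtt{BGW}}[\log\Ew[\#\mathcal{Z}^{(n)}_1]]>0$ together with the monotonicity of extinction probabilities under the ``same-environment'' coupling --- and that is essentially why the argument is set up with a BPRE rather than a GW with offspring given by distinct sites. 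If you want to keep the GW picture, you would need to replace Paley--Zygmund by a truncated second moment (spine/change-of-measure) argument along the optimal path, which is considerably more work and would reproduce the difficulty the paper's proof is designed to avoid.
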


\begin{proof}
	We want to compare $\#\mathcal{Z}^{(n)}_{i}$ to an $\NN_0$--valued branching process in random environment. We stress
	that $(\#\mathcal{Z}^{(n)}_{i},i\geq 0)$ itself is not a branching process under the annealed law $\PP$. 
	The reason for this is that two particles that are counted in $\mathcal{Z}^{(n)}_{i}$ may or may not have been located at the same
	vertex in $D_{(i-1)n}$. In this case they depend differently on different regions of $\omega$ and do not have a consistent structure of independence/dependence. 
	See Figure \ref{fig:branching_process_approximation} for an illustration. 
	
	We compare this to a process where all particles start from the same vertex in $\omega$ at times $(in, i\geq 0)$. Informally, this increases the dependence between particles as they now all use the same environment. The increased dependence should lead to a higher probability of extinction. This also means that if this modified process survives with positive probability, so should the branching random walk.

	\begin{figure}[t]
		\begin{center}
		\includegraphics[width=0.5\textwidth] {./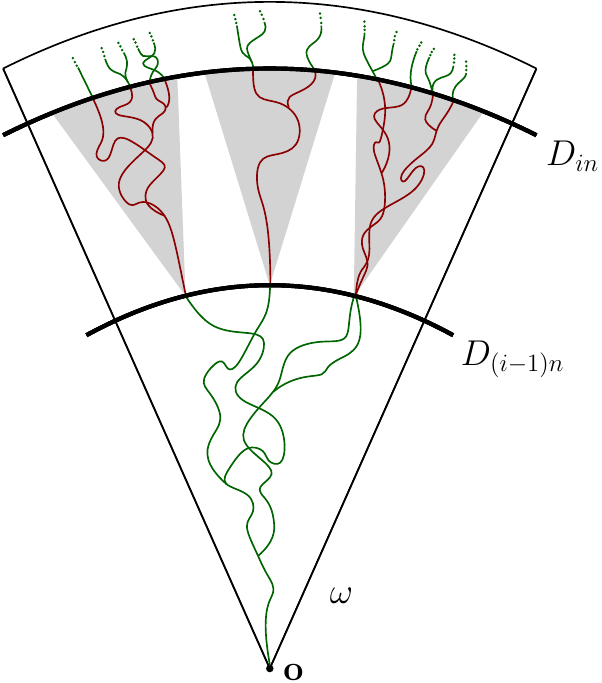}
		\caption{An illustration of the trajectories counted in $\mathcal{Z}_i^{(n)}$. Observe that the trajectories are allowed to backtrack but not below level $D_{(i-1)n}$ so that the trajectories in different cones are independent. Under $\PP$ the grey cones are independent copies of $\omega$. Each cone may have multiple particles starting in it, we compare this to a process where all particles use the same cone.}
		\label{fig:branching_process_approximation}
		\end{center}
	\end{figure}
	
	We start by choosing $n_0$ large enough so that for all $n \geq n_0$ we have 
	\begin{equation}\label{eq:supercritical}
		\EE_{\mathtt{BGW}} \left[\log \Ew\big[\#\mathcal{Z}^{(n)}_{1}\big] \right] >0.
	\end{equation}
We prove that such $n_0$ exists in Lemma \ref{lem:log}  below. Fix $n\geq n_0$ and drop the superscript to write 
	$Z_i := \#\mathcal{Z}^{(n)}_{i}$. We define an auxiliary branching process in random environment $(Z_i', i\geq 0)$. 
	Let $\vec{\omega}:=(\omega_{i})_{i\ge 0}$ be a family of \emph{i.i.d.}\ BGW--trees with distribution 
	$\mathtt{BGW}$. Set $Z'_0=1$ and define inductively
	\begin{equation*}
		Z'_{i+1} := \sum_{j=1}^{Z'_i} \xi'_{i,j},
	\end{equation*} 
	where each $\xi'_{i,j}$ is an independent copy of $Z_1$ under the quenched law $\Ps_{\omega_{i}}$ using
	$\omega_{i}$ as environment. Denote the quenched law for $(Z_i', i\geq 0)$ by 
	$\mathbf{P}_{\vec{\omega}}'$ and the annealed law by $\PP'$.
	The consequence of \eqref{eq:supercritical} is that this is a supercritical branching process in random
	 environment, see e.g.\ \cite[Section VI.5]{Ath-Ney}. In particular, the annealed survival probability is strictly 
	positive, i.e.\ there exists $c>0$, such that
	\begin{equation*}
		\PP' (Z'_i>0 ~\text{for all}~ i\ge 1)\geq c.
	\end{equation*}
	We set for $i\ge 0$ and $\ell \ge 1$
	\begin{equation*}
		\theta_{i,\ell} :=\PP \left(Z_\ell=0 \middle\vert Z_0 = i \right) \quad \text{and} \quad \theta_{i,\ell}' :=\PP' \left(Z_\ell'=0 \middle\vert Z_0'= i \right),
	\end{equation*}
	the annealed probabilities of extinction after $\ell$ steps when starting with $i$ particles.
	Note that for both $\PP$ and $\PP'$, all starting particles use the same environment. This means we have
	\begin{equation*}
		\theta_{i,\ell} =\EE_{\mathtt{BGW}}  \left[ \Pw(Z_\ell= 0)^i \right] \quad \text{and} \quad \theta_{i,\ell}' =\EE_{\mathtt{BGW}}  \left[ \mathbf{P}_{\vec{\omega}}'(Z'_\ell= 0)^i \right].
	\end{equation*}
	By construction, $Z_1$ and $Z_1'$ have the 
	same distribution and therefore for all $i\geq 0$ we have $\theta_{i,1} = \theta_{i,1}'$. In particular,
	\begin{equation*}
		\theta_{i,1}' \geq \theta_{i,1} \qquad \text{for all} ~ i\geq 0.
	\end{equation*}
	We now show by induction that for all $\ell\geq 1$ we have
	\begin{equation*}
		\theta_{i,\ell}' \geq \theta_{i,\ell} \qquad \text{for all} ~ i\geq0.
	\end{equation*}
	For any $i,j\ge 1$, Jensen's inequality leads to\footnote{The consequence of Jensen here is $\EE[X^i]\leq \EE[X^{i+j}]^{\frac{i}{i+j}}$ and $\EE[X^j]\leq \EE[X^{i+j}]^{\frac{j}{i+j}}$, hence $\EE[X^i]\EE[X^j]\leq \EE[X^{i+j}]$.}
	\begin{equation*}
		\theta_{i+j,\ell} =\EE_{\mathtt{BGW}}  \left[ \Pw(Z_\ell= 0)^{i+j} \right] \ge 
		\EE_{\mathtt{BGW}} \left[  \Pw(Z_\ell= 0)^{i} \right]
		\EE_{\mathtt{BGW}} \left[ \Pw(Z_\ell= 0)^{j} \right]
		=\theta_{i,\ell} \theta_{j,\ell}.
	\end{equation*}
	Inductively, we also have for $a_1,\ldots,a_k \in \NN^k$ with $\sum_{i=1}^k a_i = L$ that
	\begin{equation}\label{eq:conseq_jensen}
		\prod_{i=1}^k \theta_{a_i,j} \leq \theta_{L,j}.
	\end{equation}
	We want to apply the branching property. For $u\in \mathcal Z_1$, we define $(Z_i(u), i \ge 0)$ correspondingly for the subpopulation $(X(uv), uv\in \mathbb{T})$. For $x\in D_n$ let 
	$$Z_{1,x} =\#\{u \in \mathcal Z_1: X(u)=x\},$$ 
	the number of particles in $\mathcal Z_1$ that are located at $x$. 
	Note that $\sum_{x\in D_n}Z_{1,x} = Z_1$. With this (and using the convention that the empty product is $1$) we have for any $\ell\geq 1$ and $i\ge 0$,
	\begin{align*}
		\Pw \left( Z_{\ell+1} = 0\mid Z_0 = i\right) 
		& = \Pw \left( Z_{1} = 0~\text{or}	
		~\forall u\in \mathcal{Z}_1:~\text{the corresponding}~Z_\ell(u) = 0 \,\middle|\, Z_0 =i \right) \\
		&=\mathbf{E}_{\omega} \bigg[  \prod_{x\in D_n : Z_{1,x}>0} \mathbf{P}_{F_\omega(x)} \left( Z_{\ell} = 0\right)^{Z_{1,x}} \,\bigg|\,  Z_0 = i \bigg].
	\end{align*}
	Then by taking the annealed expectation and by applying  \eqref{eq:conseq_jensen} to $(Z_{1,x},x\in D_n)$
	\begin{align*}
		\theta_{i,\ell+1} = \EE \bigg[ \prod_{x\in D_n : Z_{1,x}>0 } \theta_{Z_{1,x}, \ell}  \,\bigg|\,  Z_0 = i  \bigg] 
		\le\EE\left[  \theta_{Z_1, \ell}\,\bigg|\,  Z_0 = i   \right].
	\end{align*}
Recall that $Z_1$ and $Z_1'$ have the same annealed distribution and use the induction hypothesis $\theta_{i,\ell}' \geq \theta_{i,\ell}$ for all $i\geq0$, then we have 
\begin{equation*}
	\theta_{i,\ell+1}  \le\EE\left[  \theta_{Z_1, \ell}\,\middle|\,  Z_0 = i   \right] =\EE'\left[  \theta_{Z'_1, \ell}\,\middle|\,  Z'_0 = i   \right] \le   \EE'\left[  \theta'_{Z'_1, \ell}\,\middle|\,  Z'_0 = i   \right].
\end{equation*}
Lastly, we observe that $\theta'_{i,\ell+1}=\EE'\left[  \theta'_{Z'_1, \ell}\,\middle|\,  Z'_0 = i   \right]$ by the Markov property for $(Z_i', i\geq 1)$. 
Hence we have shown by induction 
\begin{equation*}
	\theta_{i,\ell}' \geq \theta_{i,\ell} \qquad \text{for all} ~ i\ge 0,\ell\geq 1.
\end{equation*}
	We conclude the proof using monotone convergence, 
	\begin{equation*}
		\PP\left(Z_i>0 ~\text{for all}~ i\ge 1\right) = 1- \lim_{\ell\to \infty} \theta_{1,\ell} 
		\ge 1- \lim_{\ell\to \infty} \theta'_{1,\ell } = \PP'\left(Z'_i>0 ~\text{for all}~ i\ge 1\right)\geq c >0.
	\end{equation*}
\end{proof}

It remains to prove \eqref{eq:supercritical}, see Lemma \ref{lem:log} below, but before doing so, let us show how one proves Proposition \ref{prop:annealed_survival_proba} assuming Lemma \ref{lem:anneal0}.

\begin{proof}[Proof of Proposition \ref{prop:annealed_survival_proba}] 
	We first translate Lemma~\ref{lem:anneal0} about hitting times into a statement in terms of the maximal displacement. Let us fix $n\ge n_0$ as given in Lemma~\ref{lem:anneal0}. 
	Take $\tilde{a}\in (a,v_{\lambda,m})$ close enough to $v_{\lambda,m}$, such that $\frac{v_{\lambda,m}-\tilde{a}}{\tilde{a}-a}<v_{\lambda,m}$. 
	 Then we have $\tilde{a}-a > \frac{v_{\lambda,m}-\tilde{a}}{v_{\lambda,m}}$. Let $\varepsilon>0$ be small enough such that  $\tilde{a} -a > \frac{v_{\lambda, m}-\tilde{a}+\varepsilon}{v_{\lambda, m}+ \varepsilon}$. 
	
	Recall the upper bound \eqref{eq:conclusion_upper_bound}: for $\mathtt{BGW}$-a.e. $\omega$, 
		\[
	\Pw \left(\limsup_{j \to \infty} \max_{|v| = j } \frac{|X(v)|}{j} \le v_{\lambda, m}  \right)= 1. 
	\]
	Then there exists a random $n_1(\varepsilon,\omega)$, such that for all $j>n_1(\varepsilon,\omega)$, 
	\begin{equation}\label{eq:upperbdd}
		\max_{|v| = j } |X(v)| \le j (v_{\lambda, m}+ \varepsilon).
	\end{equation}
	
	Consider $(\mathcal{Z}^{(n)}_i, i\ge 0)$ as in Lemma~\ref{lem:anneal0} associated with $\tilde{a}$. 
	For $u\in \mathcal{Z}^{(n)}_{\lfloor \tilde{a} i\rfloor}$, we  have $|X(u)| = \lfloor \tilde{a} i\rfloor n $
	and by the upper bound \eqref{eq:upperbdd}, for all $i$ such that $\lfloor\tilde{a} i\rfloor n >n_1(\varepsilon,\omega)$,  
	\begin{equation}\label{eq:upperbdd-1}
	|X(u)| = \lfloor \tilde{a} i\rfloor n \le \max_{|v| = |u| } |X(v)| \le |u| (v_{\lambda, m}+ \varepsilon).
\end{equation}
Then $ |u|  \ge \frac{\lfloor \tilde{a} i\rfloor n}{v_{\lambda, m}+ \varepsilon}$. 
	By considering the descendants of $u$ at generation $in$, we	observe that 
	\[
	\max_{|v| = i n } |X(v)|
	>\lfloor \tilde{a} i\rfloor n - \Big(in -\frac{\lfloor \tilde{a} i\rfloor n}{v_{\lambda, m}+ \varepsilon}\Big)  
	\ge \Big( \tilde{a} - \frac{v_{\lambda, m}+ \varepsilon-\tilde{a}}{v_{\lambda, m}+ \varepsilon} \Big) i n 
		-\Big(1+\frac{1}{v_{\lambda, m}+ \varepsilon}\Big) n.
	\]
	By our choice of $\tilde{a}$ and $\varepsilon$, we have $\tilde{a} - \frac{v_{\lambda, m}+ \varepsilon-\tilde{a}}{v_{\lambda, m}+ \varepsilon}>a$. Thus, for all $i\in \NN$ large enough, provided that $\mathcal{Z}^{(n)}_{\lfloor \tilde{a} i\rfloor}\ne \emptyset$, we have
	$\max_{|v| = i n } |X(v)|
	> a i n$. 
	It follows from	Lemma~\ref{lem:anneal0} that provided $n$ is large enough, with positive $\PP$-probability there exists  $u \in \mathcal{Z}^{(n)}_{\lfloor \tilde{a} i\rfloor}$ for all $i$ and thus  that 
		\begin{equation*}
				\PP\left(\liminf_{i \to \infty} \max_{|u| = i n } \frac{|X(u)|}{i n} \ge a  \right) \ge 	\PP(\#\mathcal{Z}^{(n)}_{i}>0 ~\text{for all}~ i\ge 1)> 0.  
			\end{equation*}
		This proves the statement for a subsequence of times.
		To complete the proof, we notice that, for $j\in [in, (i+1)n)$ with $i\ge 0$,  
		\[
		\max_{|v| = j } \frac{|X(v)|}{j} \ge \max_{|u| = (i+1) n } \frac{|X(u)| - n}{(i+1) n}. 
		\]
		Letting $i\to \infty$, we conclude that
		\[
				\PP\left(\liminf_{j \to \infty} \max_{|v| = j } \frac{|X(v)|}{j} \ge a  \right)\ge 			\PP\left(\liminf_{i \to \infty} \max_{|u| = i n } \frac{|X(u)|}{i n} \ge a  \right) > 0. 
		\]
\end{proof}

It now remains to prove Lemma \ref{lem:log}, see below, that we have used in the proof of Lemma \ref{lem:anneal0} for Equation \eqref{eq:supercritical}. 
\begin{lemma}\label{lem:log}
  	Let  $v_{\lambda} < a< v_{\lambda, m}$. 
  		Recall from \eqref{eq:Z1} that $\mathcal{Z}^{(n)}_1:= \{u\in \mathcal{L}_n \colon |u| \le a^{-1} n,X(u_k)\ne \rt, \forall k= 1,\ldots, |u| \} $. 
  	It holds that
	\begin{equation}
	\liminf_{n\to \infty}\EE_{\mathtt{BGW}} \left[\frac{a}{n}\log \Ew\big[\# \mathcal{Z}^{(n)}_1 \big] \right] >0.
	\end{equation}
	In particular, there is $n^* \in \NN$ such that for all $n\geq n^*$ we have
	\begin{equation}\label{eq:lem:log}
	\EE_{\mathtt{BGW}} \left[\log \Ew\big[	\# \mathcal{Z}^{(n)}_1   \big] \right] >0. 
	\end{equation}
\end{lemma}

\begin{proof}
	For $v\in \TT$ with $\vert v\vert\ge n$, we write $\tau_\rt^v > n$ if $\forall 0< k\leq n$  
	we have $X(v_k)\neq \rt$. Similarly, for the $\lambda$--biased random walk $(S_j, j\geq 0)$ we set $T_n = \inf\{j\ge 0\colon |S_j|=n  \}$ and $\tau_{\rt} = \inf\{j > 0\colon S_j = \rt  \}$.  
	
	Fix any $\varepsilon>0$. By the many-to-one formula \eqref{eq:many_to_one},
	\begin{align*}
		  & \Ew\left[	\# \{v\in \TT\colon |v|= \lceil a^{-1} n\rceil , (a+\varepsilon)^{-1}n<T^v_{n } \le a^{-1} n  , \tau^v_\rt >T^v_{n}  \} \right] \\
		& = m^{ \lceil a^{-1}  n\rceil } \Pws\big( (a+\varepsilon)^{-1}n< T_{n } \le a^{-1} n, \tau_\rt >T_{n}  \big). 
	\end{align*} 
On the other hand, by the branching property at the stopping line 
$\mathcal{L}_{n}=\{u_{T^u_{n }}, u\in \TT \}$, we have 
\begin{align*}
	&\Ew\left[\# \{v\in \TT\colon  |v|= \lceil a^{-1} n\rceil ,(a+\varepsilon)^{-1}n<T^v_{n } \le a^{-1} n , \tau^v_\rt >T^v_{n}  \} \right] \\
	&= \Ew\left[	\sum_{u\in \mathcal{L}_n } m^{\lceil a^{-1} n \rceil - |u|} \ind{u\in\mathcal{Z}^{(n)}_1 , |u|>  (a+\varepsilon)^{-1}n} \right] 
		\leq m^{\lceil a^{-1}n \rceil- (a+\varepsilon)^{-1}n}\Ew\big[	\# \mathcal{Z}^{(n)}_1 \big].
\end{align*}
Combining the two observations, taking the logarithm and rearranging leads to
	\begin{align}\label{eq:lem-log-Z1}
	&	\log\Ew\big[	\# \mathcal{Z}^{(n)}_1\big]
		\ge (a+\varepsilon)^{-1} n \log m + \log  \Pws\big( (a+\varepsilon)^{-1}n< T_{n } \le a^{-1} n, \tau_\rt >T_{n}  \big) . 
	\end{align}
 We integrate both sides in $\omega$ with respect to $\mathtt{BGW}$ and let $n\to \infty$.  
It follows from Lemma \ref{lem:log0-bis} below  that
\begin{equation}
	\liminf_{n \to \infty}\EE_{\mathtt{BGW}} \left[ \frac{a}{n} \log \Ew\big[\# \mathcal{Z}^{(n)}_1\big]\right] \ge \frac{a}{a+\varepsilon} \log m -I_{\lambda} (a).\label{eq:rw-ldp}
\end{equation}
Since $I_{\lambda}$ is strictly increasing on $(v_{\lambda},1]$ with $\log m \leq I_{\lambda}(v_{\lambda, m})$, we have  $\log m> I_{\lambda}(a)$, thus the right-hand side is strictly positive for $\varepsilon$ small enough. 
This completes the proof.
\end{proof}

As we have discussed above, it remains to complete the large deviation estimate used in \eqref{eq:rw-ldp}, as stated in the following lemma. 
\begin{lemma}\label{lem:log0-bis}
	Let $(S_j, j\ge 0)$ be the $\lambda$--biased random walk starting from $\rt$. Set $T_n = \inf\{j\ge 0\colon |S_j|=n  \}$ and $\tau_{\rt} = \inf\{j > 0\colon S_j = \rt  \}$. 
	Let $a,b\in (v_{\lambda}, 1]$ with $a<b$. 
	Then we have 
	\begin{equation*}
		\lim_{n\to \infty} \EE_{\mathtt{BGW}}  \Big[ \frac{a}{n}\log  \Pws\big( b^{-1} n< T_{n} \le a^{-1}  n, \tau_\rt >T_{n}  \big)  \Big]  = - I_{\lambda}(a). 
	\end{equation*}
\end{lemma}
\begin{proof}
	The proof relies on several estimates in \cite{DGPZ-ld}.  
	We have by \cite[(4.7) and (4.10)]{DGPZ-ld} that $\mathtt{BGW}$-a.s. $\omega$, 
	\[
	\lim_{n\to \infty} \frac{1}{n}\log\Pws\big(T_{n} \le a^{-1}  n\big) =- \frac{I_{\lambda}(a)}{a}.
	\]
	Since we also know from \cite[(4.5)]{DGPZ-ld} that, for all $n\in \mathbb{N}$, 
	\[
	\Pws\big( T_{n} \le a^{-1}  n, \tau_\rt >T_{n}  \big)
	\ge a n^{-1} \Pws\big( T_{n} \le a^{-1}  n  \big), 
	\]
	it follows that  $\mathtt{BGW}$-a.s. $\omega$, 
	\begin{equation*}
		\lim_{n\to \infty} \frac{1}{n}\log\Pws\big(T_{n} \le a^{-1}  n, \tau_\rt >T_{n}\big) =- \frac{I_{\lambda}(a)}{a}. 
	\end{equation*}
Let $b>a$ with $b\in (v_{\lambda}, 1]$, as $\frac{I_{\lambda}(a)}{a}<\frac{I_{\lambda}(b)}{b}$ by \cite[Page~255]{DGPZ-ld},  we deduce that, for $\mathtt{BGW}$-a.e.\ $\omega$, 
\begin{equation}\label{eq:ld-Tn}
\lim_{n\to\infty}	\frac{a}{n} \log \Pws\big( b^{-1}n< T_{n } \le a^{-1} n, \tau_\rt >T_{n}  \big)
= -I_{\lambda} (a) . 
	\end{equation}
	
	Let us next analyse the probability $\Pws\big( b^{-1} n< T_{n } \le a^{-1} n, \tau_\rt >T_{n}  \big)$ for any $n\in \NN$.  
	Consider the event that the biased random walk $(S_j)$ moves forward in the first step to $1\in D_1$, next oscillates between the two vertices $1$ and $11\in D_{2}$ for $(\lceil b^{-1}  n\rceil -n)/2$ circles, and then moves forward for the next $n-1$ steps along the path from $1$ to $11\cdots 1\in D_n$. 
	For this trajectory of $(S_j)$, we have $\tau_\rt >T_{n}$ and $T_n =\lceil b^{-1}  n\rceil \le a^{-1} n$ for all $n>1/(a^{-1} -b^{-1} )$.  
	This gives a lower bound for every $\omega$,
	\begin{align*}
	\Pws\big( b^{-1}n< T_{n } \le a^{-1} n, \tau_\rt >T_{n}  \big) \geq \frac{1}{\kappa_{\rt}} \left( \frac{1}{\kappa_1 + \lambda} \frac{\lambda}{\kappa_2 + \lambda}\right)^{\frac{\lceil b^{-1}  n\rceil -n}{2}}  \prod_{i=1}^{n-1}\frac{1}{\kappa_i + \lambda},
	\end{align*}
	where $\kappa_{i}$ denotes the outer degree of the vertex $11\cdots 1\in D_i$. 
	Taking logarithms and using the bound $\log(x) \leq x$ for $x > 0$, we have
	\begin{align*}
	&\frac{1}{n} \log \Pws\big( b^{-1}n< T_{n } \le a^{-1} n, \tau_\rt >T_{n}  \big)\\ 
	& \hspace{4cm}  \geq -\frac{\kappa_{\rt}}{n} + \frac{(\lceil b^{-1}  n\rceil/n)-1}{2} \left( -\kappa_1 -\kappa_2- 2\lambda+\log(\lambda)\right)- \frac{1}{n}\sum_{i=1}^{n-1}(\kappa_i + \lambda).
	\end{align*}
Since $(\kappa_{i}, i\ge 1)$ are i.i.d.\ under $\mathtt{BGW}$ with $\mathbb{E}_{\mathtt{BGW}} [\kappa_{i}]= m_{\mathtt{BGW}}<\infty$,  a consequence of the law of large numbers shows that the right-hand side forms a family of uniformly integrable random variables.  
	Then we deduce by the uniform integrability that  the convergence in \eqref{eq:ld-Tn}  also holds in $L^1(\mathtt{BGW})$ and the statement follows. 
\end{proof}


\subsection{Proof of Theorem \ref{thm:max}}\label{sec:main_thms_proofs}

With Propositions \ref{prop:annealed_survival_proba} and \ref{prop:0-1-law-for-liminf} in hand, we turn to the proof of Theorem \ref{thm:max}. That is we show under our assumptions that for $\mathtt{BGW}$--almost every $\omega$ we have
\begin{equation*}
	\lim_{n\to \infty} \frac{1}{n} \max_{|u|=n} |X(u)|  =  v_{\lambda, m}, \qquad \Pw-a.s. 
\end{equation*}

\begin{proof}[Proof of Theorem \ref{thm:max}]
The upper bound part has been proved by \eqref{eq:conclusion_upper_bound}. 
	For the lower bound, most of the work has been done in Propositions 
	\ref{prop:annealed_survival_proba} and \ref{prop:0-1-law-for-liminf}. We consider $\Pwo$ again 
	as introduced in Section \ref{sec:0_1_laws}. For $a>0$ set 
	\begin{equation*}	
		G_a = \left \{\omega\colon \Pwo \left( \liminf_{n \to \infty} \max_{|u|=n} \frac{\vert X(u) \vert }{n} \geq a \right)=0 \right\}.
	\end{equation*}
	Let $1,2,\ldots, \kappa_{\rt}$ be the children of the root in $\omega$, and $\omega^i=F_\omega(i)$ be the subtree rooted at $i = 1,2,\ldots, \kappa_{\rt}$. 
	Then we claim that
	\begin{equation}\label{eq:inherited}
	\{\omega\in G_a\} \subseteq \bigcap_{i=1}^{\kappa_{\rt}} \{\omega^i\in G_a\}.  
	\end{equation}
	To see that, let us consider a BRW under law $\mathbf{P}^*_{\omega}$, starting from one particle at the 
	root. With positive probability, we have one particle in the first generation, say $u$, located at the vertex $i$. 
	If $\omega^i\not\in G_a$, i.e.\ 
	$ \mathbf{P}_{\omega^i}^* (  \liminf_{n \to \infty} \max_{|u|=n} \frac{\vert X(u) \vert }{n} \geq a)>0$, then  
	$ \mathbf{P}_{\omega}^* ( \liminf_{n \to \infty} \max_{|u|=n} \frac{\vert X(u) \vert }{n} \geq a )>0$, 
	i.e.\ $\omega\not\in G_a$. This shows \eqref{eq:inherited} and therefore $G_a$ is an inherited property in the 
	sense of Lemma \ref{lem:inherited}. This  yields that $\mathtt{BGW}(G_a)\in \{0,1\}$. 

	Let $\omega \in G_a^c$, i.e.\ $\mathbf{P}_{\omega}^* ( \liminf_{n \to \infty} \max_{|u|=n} \frac{\vert X(u) \vert }{n} \geq a )>0$. Recall that by a coupling argument we have
	  $$0<\mathbf{P}_{\omega}^* ( \liminf_{n \to \infty} \max_{|u|=n} \frac{\vert X(u) \vert }{n} \geq a )\le \mathbf{P}_{\omega} ( \liminf_{n \to \infty} \max_{|u|=n} \frac{\vert X(u) \vert }{n} \geq a ).$$
	We now conclude the proof by using Propositions \ref{prop:annealed_survival_proba} and \ref{prop:0-1-law-for-liminf}. Let $a<v_{\lambda,m}$. By Proposition~\ref{prop:annealed_survival_proba} we have 
	\begin{equation*}
		0<\PP \left( \liminf_{n\to\infty} \max_{\vert u \vert = n} \frac{\vert X(u) \vert}{n} \ge  a\right) \le \mathtt{BGW}(G_a^c),
	\end{equation*}
	which implies $\mathtt{BGW}(G_{a}^c) =1$. This means that 
	\begin{align*}
		&\Pw\left( \liminf_{n\to\infty} \max_{\vert u \vert = n} \frac{\vert X(u) \vert}{n} \geq a\right) > 0 , \quad \text{for} \quad \mathtt{BGW}-a.e. \ \omega .
	\end{align*}
	This allows us to apply Proposition \ref{prop:0-1-law-for-liminf} which implies
	\begin{align*} \Pw\left( \liminf_{n\to\infty} \max_{\vert u \vert = n} \frac{\vert X(u) \vert}{n} \geq a\right) =1, \quad \text{for} \quad \mathtt{BGW}-a.e. \ \omega.
	\end{align*}
	Together with \eqref{eq:conclusion_upper_bound}, this shows Theorem \ref{thm:max}.
\end{proof}

\subsection{Proof of Theorem \ref{thm:min}}\label{sec:min}
In this section we consider the minimal distance to the root in the transient regime and prove Theorem~\ref{thm:min}. Our proof does not provide full details, as the approach closely follows that of Theorem~\ref{thm:max} for the maximal displacement.

 Switching from maximum to minimum, the trivial direction -- corresponding to \eqref{eq:conclusion_upper_bound} -- now serves as the lower bound: 
\begin{equation}\label{eq:min:easy_direction}
	\Pw\left( \liminf_{n\to\infty} \min_{\vert u \vert =n} \frac{\vert X(u) \vert}{ n}\geq \Tilde{v}_{\lambda,m} \right)=1, \qquad \text{for} ~ \mathtt{BGW}-a.e. \ \omega.
\end{equation}
Indeed, letting $\varepsilon > 0$, by the many--to--one formula \eqref{eq:many_to_one} we have 
\begin{equation*}
	\Pw\left( \min_{|u|=n} \frac{\vert X(u) \vert }{n}\leq \Tilde{v}_{\lambda,m}- \eps \right) \le  \Ew \bigg[\sum_{|u|=n} \ind{|X(u)|\leq  (\Tilde{v}_{\lambda,m} -\eps)n} \bigg] =  m^n \Pws\left( |S_n|\leq \Tilde{v}_{\lambda,m} -\eps \right).    
\end{equation*}
By our choice of $\Tilde{v}_{\lambda,m}$ given in \eqref{velocity-descr_min}, we have $I_{\lambda}(\Tilde{v}_{\lambda,m}-\eps)>\log m$ and it follows from Theorem \ref{lem:ld2} that, for $\mathtt{BGW}$-a.e.\ $\omega$, 
\[
\limsup_{n\to \infty} \frac{1}{n} \log \Pw\bigg( \min_{|u|=n} \frac{\vert X(u) \vert }{n}\leq  \Tilde{v}_{\lambda,m} - \eps \bigg) < 0.
\]
The Borel-Cantelli lemma implies that, for $\mathtt{BGW}$-a.e.\ $\omega$,
\begin{equation*}
	\liminf_{n \to \infty} \min_{|u|=n} \frac{\vert X(u) \vert }{n} \geq  \Tilde{v}_{\lambda,m}-\eps, \qquad \Pw-a.s.
\end{equation*}
As $\eps$ was arbitrary, this yields \eqref{eq:min:easy_direction}.

The upper bound for the minimum, that is, to show the existence of slow particles with small linear velocity close to $\Tilde{v}_{\lambda,m}$, is similar to the lower bound for the maximum. For this we need two statements analogous to Propositions \ref{prop:annealed_survival_proba} and \ref{prop:0-1-law-for-liminf}. For Proposition \ref{prop:0-1-law-for-liminf}, only the transient case is relevant and the arguments requires very little modification when considering the minimal displacement instead of the maximal one. 
The statement  corresponding to Proposition \ref{prop:annealed_survival_proba} is as follows. 
\begin{proposition}\label{prop:annealed_survival_proba-min}
	Under the assumption of Theorem \ref{thm:min} we have for any $\Tilde{v}_{\lambda,m}< a < v_\lambda$, 
	\begin{align*}
		&\PP \left( \limsup_{n\to\infty} \min_{\vert u \vert = n} \frac{\vert X(u) \vert}{n} \le a\right) > 0.
	\end{align*}
\end{proposition}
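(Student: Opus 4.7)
The plan is to mirror the proof of Proposition \ref{prop:annealed_survival_proba}, replacing fast trajectories with slow ones. Fix $a' \in (\Tilde{v}_{\lambda, m}, a)$ and $\varepsilon' > 0$ small enough that $a' - \varepsilon' > \Tilde{v}_{\lambda,m}$. For each $n$, set $\mathcal{Z}^{(n)}_0 = \{\varnothing\}$ and, inductively for $i \ge 0$,
\begin{equation*}
\mathcal{Z}^{(n)}_{i+1} = \Big\{u \in \mathcal{L}_{(i+1)n} : u_{T^u_{in}} \in \mathcal{Z}^{(n)}_i,\ \tfrac{n}{a'} \le |u| - T^u_{in} \le \tfrac{n}{a'-\varepsilon'},\ |X(u_k)| > in\ \text{for}\ T^u_{in} < k \le |u|\Big\}.
\end{equation*}
These are the lineages that cross each interval of heights $[jn, (j+1)n]$ slowly (in time between $n/a'$ and $n/(a'-\varepsilon')$) without backtracking across $jn$. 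Any $u \in \mathcal{Z}^{(n)}_i$ satisfies $|X(u)| = in$ and $|u| \in [in/a', in/(a'-\varepsilon')]$, so $|X(u)|/|u| \in [a'-\varepsilon', a']$.

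Following Lemma \ref{lem:anneal0}, the next step is to construct a supercritical $\NN_0$-valued BPRE $(Z'_i)$ from i.i.d.\ copies of the environment and show that the extinction probability of $(\#\mathcal{Z}^{(n)}_i)$ is bounded above by that of $(Z'_i)$. This argument uses only the branching property at the stopping line $\mathcal{L}_{in}$, independence of fringe trees under $\PP$, and Jensen's inequality for the $\mathtt{BGW}$-averaged extinction probabilities, and transfers verbatim from the proof of Lemma \ref{lem:anneal0}. Consequently, the $\PP$-survival of $(\mathcal{Z}^{(n)}_i)$ with positive probability reduces to the supercriticality condition $\EE_{\mathtt{BGW}}[\log \Ew[\#\mathcal{Z}^{(n)}_1]] > 0$ for all $n$ large enough.

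This last bound follows from a slow-down analogue of Lemma \ref{lem:log}. By the many-to-one formula,
\begin{equation*}
\Ew[\#\mathcal{Z}^{(n)}_1] = \sum_{t=\lceil n/a'\rceil}^{\lfloor n/(a'-\varepsilon')\rfloor} m^t\, \Pwso(T_n = t,\ \tau_{\rt} > T_n) \ge m^{n/a'}\, \Pwso\big(T_n \in [n/a', n/(a'-\varepsilon')],\ \tau_{\rt} > T_n\big).
\end{equation*}
Taking $\log$, dividing by $n$ and integrating against $\mathtt{BGW}$, supercriticality reduces to the bound
\begin{equation*}
\liminf_{n\to\infty} \EE_{\mathtt{BGW}}\Big[\tfrac{1}{n}\log \Pwso\big(T_n \in [n/a', n/(a'-\varepsilon')],\ \tau_{\rt} > T_n\big)\Big] \ge -\tfrac{I_\lambda(a')}{a'},
\end{equation*}
where $I_\lambda$ is now the slow-down rate function from Theorem \ref{lem:ld2}. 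Combined with $\log m / a' > I_\lambda(a')/a'$ (which is equivalent to our choice $a' > \Tilde{v}_{\lambda, m}$), this yields the required strict positivity.

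Finally, to transfer back to the minimum displacement: on the event $\{\#\mathcal{Z}^{(n)}_i > 0 \text{ for all } i\}$, picking any $u \in \mathcal{Z}^{(n)}_i$ and setting $N := |u|$ gives $|X(u)|/N = in/|u| \le a' < a$, hence $\min_{|v|=N}|X(v)|/N \le a$; the times $N$ arising this way lie in $\bigcup_i [in/a', in/(a'-\varepsilon')]$ and thus form an infinite, unbounded set, giving $\liminf_{N\to\infty} \min_{|v|=N}|X(v)|/N \le a$ on an event of positive $\PP$-probability. The main technical obstacle is the slow-down LDP estimate above, mirroring Lemma \ref{lem:log0}: this is precisely where the assumption $d_{\min} \ge 2$ enters, guaranteeing that $I_\lambda$ is strictly decreasing on $[0, v_\lambda]$ via Theorem \ref{lem:ld2}. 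The conditioning on $\{\tau_{\rt} > T_n\}$ should be handled as in Lemma \ref{lem:log0}, by combining the $\mathtt{BGW}$-almost-sure LDP estimates of \cite{DGPZ-ld} in the slow-down regime with the trivial lower bound $\Pwso(T_n = n) \ge (1+\lambda)^{-n}$ to absorb boundary terms.
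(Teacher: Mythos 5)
Your proposal is correct and follows essentially the same route as the paper's: you define an embedded generation structure of slow, non-backtracking lineages crossing each level window $[in,(i+1)n]$ in time roughly $n/a'$, compare it to a supercritical branching process in random environment via Jensen's inequality on $\mathtt{BGW}$-averaged extinction probabilities, feed in the slow-down large deviation estimate (the analogue of Lemma~\ref{lem:log0}, where $d_{\min}\ge 2$ enters through Theorem~\ref{lem:ld2}), and transfer survival of the embedded process to the liminf of the minimal displacement along the subsequence of crossing times. The only cosmetic difference is the bookkeeping with $a'$ and $\varepsilon'$; otherwise the decomposition, the key estimates, and the comparison lemma coincide with the paper's Lemmas~\ref{lem:log-min} and~\ref{lem:max-log}.
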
	
We comment on the proof in Section~\ref{sec:annealed_min}. Combining Proposition~\ref{prop:annealed_survival_proba-min} and the zero--one law  completes the proof of Theorem \ref{thm:min}.  

\subsection{Annealed probability of slow particles, proof of Proposition~\ref{prop:annealed_survival_proba-min}}\label{sec:annealed_min}

We now prove Proposition~\ref{prop:annealed_survival_proba-min}. 
Similarly to Lemma~\ref{lem:log0-bis}, the first ingredient we need is a large deviation estimate for the $\lambda$--biased random walk that we show along the lines of \cite{DGPZ-ld}. 

\begin{lemma}\label{lem:log-min}
	Let $(S_j, j\ge 0)$ be a $\lambda$-biased random walk starting from $\rt$. 
	Set $T_n = \inf\{j\ge 0\colon |S_j|=n  \}$ and $\tau_{\rt} = \inf\{j> 0\colon S_j = \rt  \}$. 
	Let $a \in [0, v_\lambda)$. Then under the same assumptions as Theorem~\ref{lem:ld2}, we have 
		\begin{equation*}
		\lim_{n\to \infty}\EE_{\mathtt{BGW}} \Big[ \frac{a}{n}\log \Pws\big(\tau_{\rt}> T_{n} \ge a^{-1}  n\big) \Big]  = - I_{\lambda}(a),
	\end{equation*}
where $I_{\lambda}(a)$ is the rate function in Theorem~\ref{lem:ld2}. 
	In particular, for $a \in (\Tilde{v}_{\lambda,m}, v_\lambda)$ with $\Tilde{v}_{\lambda,m}$ as in \eqref{velocity-descr_min}, such that  $I_{\lambda}(a) < \log m$, 
	\begin{equation}
		\liminf_{n\to \infty}\EE_{\mathtt{BGW}} \Big[ \log m + \frac{a}{n}\log \Pws\big(\tau_{\rt} > T_{n} \ge a^{-1}  n\big) \Big] >0. 
	\end{equation}
\end{lemma}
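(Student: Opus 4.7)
The plan is to mirror the proof of Lemma~\ref{lem:log0} as closely as possible, with the slow-down large deviation result (Theorem~\ref{lem:ld2}) replacing the speed-up result (Theorem~\ref{lem:ld}). The key modified annealed quantity is
\[
e_{\Delta}^{\mathrm{slow}} := \PP^*\bigl(\tau_{\rt} > T_{\Delta} \ge a^{-1}\Delta\bigr),
\]
for which the crucial identity, analogous to \cite[Eqs.\ (4.7) and (4.10)]{DGPZ-ld} used at the end of Lemma~\ref{lem:log0}, should read $\lim_{\Delta\to\infty} \Delta^{-1}\log e_{\Delta}^{\mathrm{slow}} = -a^{-1} I_{\lambda}(a)$, where $I_\lambda$ is now the slow-down rate function of Theorem~\ref{lem:ld2}. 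Since we are in the transient regime and $a\in[0,v_\lambda)$, the standing assumption \eqref{eq:assumption_for_min} (i.e.\ $d_{\min}\ge 2$, so also \eqref{eq:ldc} is satisfied) guarantees the availability of this LDP and of the associated annealed estimates.

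First, I would write $n = n_1 + N\Delta$ with $n_1 \in [n_0,n_0+\Delta)$ and decompose the event $\{\tau_\rt > T_n \ge a^{-1}n\}$ by inserting the stopping levels $n_1, n_1+\Delta, \ldots, n_1+N\Delta$ along the ancestral spine of the walk, and demanding that each successive crossing from level $n_1 + i\Delta$ to $n_1 + (i+1)\Delta$ takes at least $a^{-1}\Delta$ steps without revisiting the root. This gives a quenched lower bound of the form
\[
\Pwso\bigl(\tau_{\rt}>T_n \ge a^{-1}n\bigr) \;\ge\; \Pwso\bigl(\tau_\rt > T_{n_1} \ge a^{-1} n_1\bigr)\, e_{\Delta}^N \prod_{i=0}^{N-1}\mathtt{Z}_{n_1+i\Delta}(\omega),
\]
for suitable $\mathtt{BGW}$-measurable correction factors $\mathtt{Z}_j(\omega)$ that capture the deviation of the quenched crossing probabilities from their annealed value $e_\Delta^{\mathrm{slow}}$. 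These factors and the associated sets $B_k$ on which they are well controlled are built in the same way as in \cite[Sec.\ 4]{DGPZ-ld}; the uncertainty estimate \cite[Proposition~4.1]{DGPZ-ld} ensures $\mathtt{BGW}(B_{k_0}) \ge 1-\eps/2$ for some $k_0$, and summability $\sum_j \mathtt{BGW}(\mathtt{Z}_j\le \delta, B_{k_0}) < \infty$ gives
\[
\mathtt{BGW}\Bigl(\log \Pwso(\tau_\rt > T_n \ge a^{-1}n) \ge \log \Pwso(\tau_\rt > T_{n_1}\ge a^{-1}n_1) + N \log(\delta e_{\Delta}^{\mathrm{slow}})\Bigr) \ge 1-\eps,
\]
for all $n$ large enough.

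Combining this with the trivial bound $\Pwso(\tau_\rt > T_n \ge a^{-1} n) \ge \Pwso(S_j = S_0,\, 1\le j < \lceil a^{-1}n\rceil,\, T_n = \lceil a^{-1}n\rceil) \ge c^{-n/a}$ for some deterministic $c>1$ to control the exceptional set, dividing by $n$, multiplying by $a$, and letting $N\to\infty$ then $\eps, \delta \to 0$ yields
\[
\liminf_{n\to\infty} \EE_{\mathtt{BGW}}\Bigl[\tfrac{a}{n}\log \Pwso(\tau_\rt > T_n \ge a^{-1}n)\Bigr] \ge a\Delta^{-1}\log e_{\Delta}^{\mathrm{slow}},
\]
and finally $\Delta\to\infty$ gives the desired bound $-I_\lambda(a)$. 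The second assertion follows since, for $a\in(\Tilde{v}_{\lambda,m}, v_\lambda)$, the definition \eqref{velocity-descr_min} together with the strict monotonicity of $I_\lambda$ on $[0,v_\lambda]$ (guaranteed by $\lambda<d_{\min}$) forces $I_\lambda(a) < \log m$.

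The main obstacle is not the skeleton of the argument, which is essentially identical to Lemma~\ref{lem:log0}, but rather verifying that the machinery of \cite[Sec.\ 4]{DGPZ-ld} (the quantities $\mathtt{Z}_j$, the sets $B_k$, the uncertainty estimate, and the annealed asymptotics of $e_\Delta^{\mathrm{slow}}$) was set up in \cite{DGPZ-ld} for the slow-down event as well, not only the speed-up event. If this is not spelled out, one needs to carefully redo the lattice-animal / trap-construction arguments there for the event $\{\tau_\rt > T_\Delta \ge a^{-1}\Delta\}$ and check that the identity $\lim_{\Delta\to\infty}\Delta^{-1}\log e_\Delta^{\mathrm{slow}} = -a^{-1}I_\lambda(a)$ indeed holds with $I_\lambda$ being the slow-down rate function of Theorem~\ref{lem:ld2}; this is exactly the point where \eqref{eq:assumption_for_min} enters crucially.
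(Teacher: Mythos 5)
Your overall structure is correct and is essentially the same as the paper's: decompose the ancestral line into blocks of length $\Delta$, use an annealed per-block probability $e_\Delta$, control the quenched--annealed mismatch via a set of good trees, and then use a crude bound on the exceptional set.  The paper confirms your hedge about whether DGPZ provides the slow-down machinery: it does, but in a \emph{separate} section.  The relevant references are \cite[Page~267]{DGPZ-ld}, the uncertainty estimate \cite[Proposition~5.1]{DGPZ-ld}, and the limit $\lim_{\Delta\to\infty}\Delta^{-1}\log e_\Delta = -a^{-1}I_\lambda(a)$ from \cite[Equation~(5.3)]{DGPZ-ld} --- not the Section~4 / Proposition~4.1 / (4.7)--(4.10) apparatus you cite, which concerns speed-up.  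The validity of the slow-down version is exactly where $d_{\min}\ge 2$ is used, as you noted.

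The concrete error is in the trivial bound for the exceptional set.  The event $\{S_j = S_0,\,1\le j < \lceil a^{-1}n\rceil,\,T_n=\lceil a^{-1}n\rceil\}$ has probability zero: under $\Pm^*_\omega$ a walker at $\rt$ must either move to a child or be killed, so it cannot stay at $\rt$; and even under $\Pm_{\omega,\rt}$ the event forces $\tau_\rt\le T_n$ (if $\tau_\rt$ is the first return time $\ge 1$) and forces a one-step jump from level $0$ to level $n$.  More importantly, no deterministic constant $c$ can appear here: to make $T_n$ large while avoiding the root, the walk must dawdle \emph{above} the root, and the cheapest way to do this is to oscillate between levels $D_1$ and $D_2$ for $\lceil a^{-1}n\rceil-2$ steps before heading up.  The probability of each down-step at level $2$ depends on the outer degree of the vertex at level $1$, so the resulting bound is necessarily $\omega$-dependent.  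The paper records it as
\[
\Pwso\bigl(\tau_\rt> T_{k} \ge a^{-1}k\bigr)
\;\ge\; \left(\frac{1}{\lambda+1}\right)^{\lceil a^{-1}k\rceil}\left(\frac{1}{\lambda+K_\omega}\right)^{\lceil a^{-1}k\rceil},
\]
where $K_\omega$ is the maximal outer degree among vertices of $\omega$ at level $1$.  This randomness has to be carried into the expectation over the exceptional set, which is why a flat $c^{-n/a}$ does not work.
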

\begin{proof}
	We deduce by results in \cite{DGPZ-ld} that, for $\mathtt{BGW}$-a.e.\ $\omega$, 
\begin{equation}\label{eq:lemlogmin}
	\lim_{n\to \infty} \frac{1}{n}\log\Pws\big(\tau_{\rt}> T_{n} \ge a^{-1}  n\big) = 
	\lim_{n\to \infty} \frac{1}{n}\log\Pws\big(T_{n} \ge a^{-1}  n\big) =- \frac{I_{\lambda}(a)}{a},
\end{equation}
where the first equality follows from \cite[the display above (5.3)]{DGPZ-ld} and the second one from \cite[(5.3) and (5.5)]{DGPZ-ld}.

	Moreover, for any $n\in \NN$, consider the event that $(S_j)$ moves forward in the first two steps, to the vertex $1\in D_1$ and then $11\in D_{2}$, and then oscillates between the two vertices $1$ and $11$, for the next $\lceil a^{-1}  n\rceil-2$ steps. This gives a lower bound for every $\omega$: 
\begin{align*}
&\frac{1}{n}\log \Pws\big(\tau_{\rt} >T_{n}
\ge a^{-1}  n\big) 
\\
& \ge  \frac{1}{n}\log \frac{1}{\kappa_0}+ \frac{\lceil a^{-1}  n\rceil+1}{2n}\log \left(\frac{1}{\lambda+\kappa_{1}}\right)+\frac{\lceil a^{-1}  n\rceil}{2n} \log \left(\frac{\lambda}{\lambda+\kappa_{2}}\right)
\ge -c_1 - c_2\sum_{i=0}^2 \log \left(\lambda+\kappa_{i}\right),
\end{align*}
where $\kappa_{0}$, $\kappa_{1}$, $\kappa_{11}$ are the outer degrees of the vertices $\rt$, $1$, $11$, respectively, and  $c_1,c_2>0$ are constants that does not depend on $n$ nor $\omega$.
 For $i=0,1,2$, since we have assumed $m_{\mathtt{BGW}} = \mathbb{\mathtt{BGW}}[\kappa_{i}]<\infty$, we have by Jensen's inequality that $\mathbb{E}_{\mathtt{BGW}}[\log \left(\lambda+\kappa_{i}\right)]<\infty$. 
 Therefore, taking expectation $\mathbb{E}_{\mathtt{BGW}}$ to \eqref{eq:lemlogmin}, we are allowed to use the dominated convergence theorem to deduce the desired statement. 
\end{proof}

We turn to the BRW under $\mathbf{P}_{\omega}$. 
As in the maximum case, we consider an embedded discrete-time branching process,  analogous to $\mathcal{Z}^{(n)}$ defined in \eqref{eq:Z1}. 
More precisely, for any particle $u\in \TT$ and $i,n\in \NN$,  write $T^u_{i n}:=\inf \{k \ge 1:  |X(u_k)|  =  i n\}$ for the hitting time of level $D_{in}$ along the ancestral lineage of $u$; let  $\mathcal{L}_{in}:= \{ u_{T^u_{in}}, u\in\TT \}$ be the collection of particles at which the ancestral lineage first hits  $D_{in}$.
Set $\widehat{\mathcal{Z}}^{(n)}_{0} = \{\varnothing\}$.  
Define recursively the set of particles, for $i\ge 0$, 
\begin{equation*}
	\widehat{\mathcal{Z}}^{(n)}_{i+1} = \left\{u\in \mathcal{L}_{(i+1)n}: u_{T^u_{i n}}\in \widehat{\mathcal{Z}}^{(n)}_i, |u|- T^u_{i n}\ge a^{-1} n,  \vert X(u_k) \vert > in, \, T^u_{in} < k \leq \vert u \vert \right\}.
\end{equation*}

Using Lemma~\ref{lem:log-min} and the many-to-one formula, we deduce 
the following analogue of Lemma~\ref{lem:log}  by similar arguments. 
\begin{lemma}\label{lem:max-log}
	Let  $a\in (\widetilde{v}_{\lambda, m}, v_{\lambda})$. Then there is $n'\in \NN$ such that for all $n\geq n'$
	\begin{equation*}
		\EE_{\mathtt{BGW}} \left[\log \Ew\big[\# \widehat{\mathcal{Z}}^{(n)}_1 \big] \right] >0. 
	\end{equation*}
\end{lemma}

Analogously to Lemma~\ref{lem:anneal0} we can now prove that the branching process $(\widehat{\mathcal{Z}}^{(n)}_{i}, i\ge 0)$ survives with positive probability. That is, for every $n\geq n'$ we have
\begin{equation*}
	\PP(\#\widehat{\mathcal{Z}}^{(n)}_{i}>0 ~\text{for all}~ i\ge 1)> 0.  
\end{equation*}

The remaining part of the proof is essentially the same as for the maximal displacement part, and we omit the details.

\subsection{Open questions}\label{Open}

We conclude with  some open questions regarding our model and similar models. We look forward to addressing some of these in future work.

\begin{enumerate}

\item  Our result describes the leading linear term of the growth of the maximal displacement. What can be said about the second term, i.e.\ the fluctuations of $\max_{|u|=n} |X(u)| - n \cdot v_{\lambda, m}$? This is a active direction of research for branching Brownian motion and branching random walks in the multi-dimensional case. We refer to \cite{aidekon_convergence_2013} for the ``classical" result for one-dimensional branching random walks.

	\item
	To what extent can this result be generalised to other branching Markov chains? More precisely, given a Markov chain with large deviation rate function $I$, what assumptions have to be made so that the maximal displacement of the corresponding branching Markov chain (with mean offspring $m$) has a linear speed with velocity given by $\sup\{ a: I(a)\leq  \log m \}$?
	
	\item For a branching Markov chain which is statistically transitive, i.e. the underlying Markov chain is a random walk in a  homogeneous random environment, is it true in general that there is no weak recurrence phase?

	\item
	In this work we have mostly treated the large deviation principle and the rate function $I_\lambda$ as a black box. What can be said about the environment in the neighbourhood of the particles that achieve the maximum? What can be said about their ancestral path to the root? This can be compared to \cite{aidekon_speed_2014}, where it is proved that the environment seen from the $\lambda$--biased random walk is absolutely continuous with respect to $\mathtt{BGW}$, but not identical  to $\mathtt{BGW}$ itself.  
	
	\item In our model, in the critical case when $m = \frac{d_{\min}+\lambda}{2\sqrt{\lambda d_{\min}}}$, the minimal distance to the root exhibits zero velocity, despite the biased branching random walk being transient. What is the correct (sublinear) rate of escape in this scenario?
\end{enumerate}

\bigskip

\begin{paragraph}{Acknowledgments}
	We thank two anonymous referees for reading extremely carefully the first version, detecting many inaccuracies and suggesting improvements.
\end{paragraph}

\begin{paragraph}{Funding}
	This work was supported in part by the National Key R\&D Program of China (No.\ 2022YFA1006500), EPSRC grant EP/W523781/1, and National Natural Science Foundation of China (No.\ 12288201 and 12301169). 
\end{paragraph}

\bibliographystyle{abbrv} 
\bibliography{reference_v3}

\end{document}